\documentclass[english,12pt,letterpaper]{article}
\usepackage[letterpaper,margin=1in]{geometry}\usepackage[utf8]{inputenc}
\usepackage[T1]{fontenc}
\usepackage{amsmath}
\usepackage{amssymb}
\usepackage{amsfonts}
\usepackage{amsthm}
\usepackage{algorithm}
\usepackage{float}

\usepackage{parskip}
\usepackage{graphicx}
  \usepackage{tikz}
\usetikzlibrary{decorations.pathreplacing,calc}
\def\DEFAULTFONT{}

\usepackage{base}
\usepackage{macros}
\usepackage{minimacros}
\DeclareMathOperator{\lcm}{lcm}
\newcommand{\Fp}{\mathbb{F}_p}

\algtext*{EndIf}
\algtext*{EndFor}
\algtext*{EndProcedure}
\title{Faster Deterministic Integer Root Finding for Integer Polynomials}

\begin{document}
\author{Itamar Nir\thanks{This research was funded by the European Union (ERC, EACTP, 101142020). Views and opinions expressed are however those of the author(s) only and do not necessarily reflect those of the European Union or the European Research Council Executive Agency. Neither the European Union nor the granting authority can be held responsible for them.}}
\date{}
\maketitle
\begin{abstract}
We give a deterministic algorithm for finding all integer roots of a square-free polynomial $f\in\mathbb Z[x]$ of degree $n$ with $\lVert f\rVert_\infty<2^b$. The running time is
$$
\tilde{O}(n^{3/2}b),
$$
improving the $\tilde{O}(n^2b)$ bound of Harvey and Hittmeir (Research in Number Theory, 2022).

The algorithm follows the classical $p$-adic framework: find roots modulo a prime $p$, lift them modulo a high power of $p$, and verify the lifted candidates. The main new idea is to avoid searching for a prime for which $f\bmod p$ is square-free. Instead, we find a prime for which the total multiplicity of repeated roots modulo $p$ is small. This requires lifting repeated roots, which we handle using a weighted lifting tree. We also give a faster deterministic candidate-verification algorithm: given $n$ candidate integers smaller in absolute value than $2^b$, we decide which are roots of $f$ in
$$
\tilde{O}(nb+\min(n^2,nb^2))
$$
bit operations. Together, these ingredients give the first deterministic subquadratic-in-$n$ improvement for integer root finding in the square-free case.

\end{abstract}

\newpage
\tableofcontents
\newpage
 
\pagenumbering{arabic}
\setcounter{page}{1}

\section{Introduction}

Univariate polynomials over the integers are among the most basic objects in symbolic computation. We study the following natural problem. Given
\[
        f(x)=a_0+a_1x+\cdots+a_nx^n \in \mathbb Z[x],
\]
with coefficients of absolute value at most \(2^b\), we want to determine all
roots of \(f\) that lie in \(\mathbb Z\). In the dense representation the input
has \(\Theta(nb)\) bits, and the output contains at most \(n\) integers, each of absolute value at most $2^b$. Thus
the natural bit-complexity benchmark is near-linear time in the input size, up
to polylogarithmic factors.

Integer root finding is a special case of polynomial factorization over \(\mathbb Q\): an integer root \(\alpha\) corresponds to a linear factor \(x-\alpha\). However, extracting only the linear factors is a much more
specific task than computing the complete factorization. Although polynomial factorization over \(\mathbb Q\) has been polynomial-time since the work of Lenstra, Lenstra, and Lov\'asz~\cite{LLL}, the fine-grained complexity of
extracting just the integer roots remains a separate and natural question.

Besides being a natural primitive, integer root finding is closely connected to deterministic algorithms for integer factorization and related divisor-finding problems. In Coppersmith-type methods, one constructs
auxiliary polynomials whose small integer roots encode the desired arithmetic information. Thus, after the lattice or modular step has produced such a polynomial, the remaining task is to recover its integer roots. See e.g.,~\cite{Coppersmith,HowgraveGraham,BDHG,HarveyHittmeirRPower}.

Factorization and root finding algorithms often follow modular methods and Hensel lifting. In a typical modular approach
one reduces the polynomial modulo a prime \(p\), solves a problem over \(\mathbb F_p\), and then lifts the modular information to higher powers of \(p\). Such ideas underlie classical factorization algorithms, including the one by
Zassenhaus \cite{Zassenhaus}, and remain central in modern computer algebra, see~\cite{vzgg2013}. For the more special task of finding rational zeros of integral polynomials, Loos studied \(p\)-adic expansion methods and analyzed Newton--Hensel-type
algorithms~\cite{Loos}. His work already isolates rational root finding as a problem that can be attacked directly, without computing the complete
factorization.

More recently, Harvey and Hittmeir revisited this \(p\)-adic strategy in the
context of deterministic algorithms related to integer factorization, specifically to finding $r$-power divisors. As part
of their work, they gave a deterministic algorithm that finds all integer roots
of a square-free polynomial \(f\in\mathbb Z[x]\) in
\[
        \tilde{O}(n^2 b)
\]
bit operations, where \(n=\deg(f)\) and \(\|f\|_\infty<2^b\). Their algorithm follows the following three-step plan, which is also the basis of Loos's algorithm: find a prime \(p\) for which
\(f\bmod p\) is square-free, find the roots modulo \(p\) and lift them by
Hensel lifting, and finally verify which lifted candidates are genuine integer
roots.

This leaves a substantial gap between the \(\Theta(nb)\)-bit input size and
the best known deterministic running time. The main question addressed in this
paper is whether one can move significantly closer to the
input-size benchmark.

As explained in \cite[Remark A.8]{HarveyHittmeirRPower}
this standard \(p\)-adic strategy has two bottlenecks. The first is finding a small prime \(p\) for which the reduction \(f\bmod p\) is well behaved. The usual requirement is that \(f\bmod p\) be square-free, since then every root modulo \(p\) has a unique Hensel lift. Deterministically finding such a prime is already too expensive for our target running time. In particular, in their work Harvey and Hittmeir tested all primes up to $O(nb)$ \cite[Proposition A.5]{HarveyHittmeirRPower}. The second bottleneck is the final verification step. After lifting the roots modulo a high power of
\(p\), one obtains a list of candidates, and one must decide which candidates are actual integer roots without evaluating \(f\) separately at each candidate.

Our main result is that, under the same square-free input assumptions, we give an algorithm that finds all integer roots of $f$ in time $\tilde{O}(n^{3/2}b)$.
Similar to the preceding $p$-adic algorithms, our method is based on a modular strategy. However, we require the following new ideas in order to avoid the bottlenecks described above.

For the choice of the prime, we relax the condition that $f_p := f\bmod p$ must be square-free in $\mathbb{F}_p$.
Instead, we find a small prime
$p$ for which \emph{the sum of multiplicities of repeated roots of $f_p$ in $\mathbb{F}_p$ is small}: more
precisely, $\deg(\gcd(f_p,f'_p))$ is bounded by a parameter $t$. We make use of the properties of the resultant $\operatorname{res}(f,f')$ to show that a naive iteration over the prime numbers finds such $p$ efficiently. 
This weaker condition means that the usual simple-root form of Hensel lifting is no longer sufficient. We therefore develop a lifting procedure that handles repeated roots.

For the verification step, we give a deterministic multipoint method that,
given $n$ candidate integers $x_1,\dots,x_n$ with $|x_i| <2^b$, decides which of them are
roots of $f$ in
\[
\tilde{O}\bigl(nb+\min\{n^2,nb^2\}\bigr)
\]
bit operations. We give two algorithms: one running in $\tilde{O}(nb+n^2)$
time, and one running in $\tilde{O}(nb^2)$ time. Both algorithms use simple divisibility tests to reduce the candidate set to a size that can be checked
naively. The first repeatedly tests divisibility conditions of the form
$x_i-y\mid f(y)$ for small integers $y$. The second algorithm checks for each candidate $x_i$ whether $a_0 ^b \mid f(x_i)$ where $a_0$ is the constant term of $f$.

Combining these ingredients, and taking $t \simeq \sqrt{n}$, gives a
deterministic algorithm for finding all integer roots of $f$ in
$\tilde{O}(n^{3/2}b)$ time. We now state our results formally.

\subsection{Our results}

Our main result is an algorithm for finding the integer roots of a polynomial. Let $\|f\|_{\infty}$ denote the largest absolute value of a coefficient of $f$.

\begin{theorem}[Finding roots]\label{thm:finding_integer_roots}
    Let $f \in \mathbb{Z}[x]$ be a square-free polynomial of degree $n$ satisfying $\|f\|_\infty <2^b$ for some $b \in \mathbb{N}$. Then there is a deterministic algorithm that finds all integer roots of $f$ in time $\tilde{O}(n^{3/2}b)$.
\end{theorem}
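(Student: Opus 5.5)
We follow the three-stage $p$-adic plan from the introduction, with parameter $t\simeq\sqrt n$.

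\emph{Stage 1: choose the prime.} Since $f$ is square-free, $R=\operatorname{res}(f,f')$ is a nonzero integer. Hadamard's bound on the Sylvester matrix gives $\log|R|=\tilde O(nb)$. For a prime $p\nmid a_n$, put $f_p=f\bmod p$ and $d_p=\deg\gcd(f_p,f_p')$. The key claim is that $p^{d_p}\mid R$, or at least $p^{\Omega(d_p)}\mid R$. To prove it, reduce the Sylvester matrix modulo $p$. Its rank deficiency over $\mathbb F_p$ is at least $\deg\gcd(f_p,f'_p)$, so by the Smith normal form, $p^{d_p}$ divides the determinant. If $p\mid a_n$, the degree drop introduces additional $p$-factors, and we simply skip such primes.

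Now iterate over the primes $p=2,3,5,\dots$ and stop at the first $p\nmid a_n$ with $d_p\le t$. Each prime that fails contributes a factor at least $p^{t}$ to $|R|$. Writing $\log\prod_{\text{failed }p}p\le \log|R|/t=\tilde O(nb/t)$, the search ends after $\tilde O(nb/t)$ primes, all of size $P=\tilde O(nb/t)$. The primes dividing $a_n$ add at most $O(b)$ more. Testing one prime costs $\tilde O(nb)$ to reduce $f$ plus $\tilde O(n\log p)$ for the gcd. That gives $\tilde O(nb\cdot nb/t)$ in total, which is too expensive. To fix this, use a remainder tree: compute $f\bmod p$ for a batch of primes of product size $\tilde O(nb/t)$ bits all at once, in $\tilde O(nb)$ per coefficient block. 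More simply, reduce each coefficient modulo all primes $\le P$ by simultaneous multi-modular reduction, which costs $\tilde O(n(b+P))$. The gcds then cost $\tilde O(nP)=\tilde O(n^2b/t)$. With $t=\sqrt n$ this is $\tilde O(n^{3/2}b)$.

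\emph{Stage 2: roots mod $p$ and weighted lifting.} Find the roots of $f_p$ in $\mathbb F_p$ deterministically, for instance by brute-force multipoint evaluation at all $p\le P$ points in $\tilde O(n+P)$ operations, which fits the budget. Simple roots Hensel-lift uniquely. For the repeated roots, whose total multiplicity is at most $t$, build a lifting tree. At a node $r\bmod p^k$, substitute $x=r+p^k y$ and divide out the maximal power of $p$. The $\mathbb F_p$-roots of the resulting polynomial give the children, and the multiplicity of a child is bounded by the number of roots of the reduced polynomial. Weight each node by its multiplicity $m$. The tree then has at most $m$ branches per level, and weight decreases or splits. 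Branches of weight $1$ become simple and switch to Newton lifting.

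We need precision $p^{K}$ with $p^K>2^{b+1}$, since every integer root $|\alpha|<2^b$ divides $a_0$. The Taylor shift at each node costs $\tilde O(n\cdot Kb\log p)$-ish, so we must bound the work by (number of live nodes per level) $\le t$, over $K$ levels. The obstacle is that this is naively $t\cdot K\cdot nb$. To control it, at a node of multiplicity $m$ we only need $f$ truncated to degree $m$ in $y$ after the shift, plus the valuation information. Keeping precision growing geometrically, with the cost per level governed by $\tilde O(nb)$ for a single shift shared by the batch through a subproduct tree, should give $\tilde O(t\,nb)=\tilde O(n^{3/2}b)$. \textbf{This step is the main obstacle}: proving that the weighted tree has total cost $\tilde O(nbt)$ and yields at most $n$ candidates modulo $p^K$. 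I would first try an amortization in which a node's weight bounds both its branching and the truncation degree, so that each level costs $\tilde O(nb+tb)$ amortized and Newton-style doubling applies once the weight reaches $1$.

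\emph{Stage 3: verification.} Take the symmetric representatives of the at most $n$ candidates modulo $p^K$, and discard any of absolute value $\ge 2^b$. Then apply the candidate-verification algorithm in $\tilde O(nb+\min(n^2,nb^2))\subseteq\tilde O(n^{3/2}b)$ bit operations, using $\min(n^2,nb^2)\le n^{3/2}b$. Summing the three stages gives the claimed $\tilde O(n^{3/2}b)$.
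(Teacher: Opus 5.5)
\textbf{Verdict: genuine gap in Stage 2.} Your Stages 1 and 3 are sound and essentially match the paper. Stage 1 uses the rank deficiency of the Sylvester matrix to get $p^{d_p}\mid\operatorname{res}(f,f')$, reduces the coefficients in one batch modulo the first $\tilde O(nb/t)$ primes, and runs one gcd per prime. Stage 3 applies the verification theorem, using $\min(n^2,nb^2)\le n^{3/2}b$.

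The gap is Stage 2, which you flag yourself. Nothing in the proposal shows that the repeated roots can be lifted to $b$ digits in $\tilde O(nbt+(t+\sqrt n)\sqrt p)$ time, and this is the technical heart of the result.

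The obstacle you identify is real. Along a branch of weight $w$, the valuation of the branching polynomial can grow by $w$ per layer, so the coefficients matter modulo $p^{\Theta(wb)}$. Recomputing each node's branching polynomial from its parent's by a Taylor shift therefore costs about $\tilde O(n\,w(x)\,b)$ per node. That is $\tilde O(ntb)$ per layer and $\tilde O(ntb^2)$ overall.

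Your proposed remedy does not close this, for three reasons:
\begin{itemize}
\item Truncating the shifted polynomial to degree $m$ is not valid. The next substitution $a=\delta+pa'$ feeds every coefficient $c_j$ into the lower coefficients; for instance $c_j\delta^j$ enters the constant term. The coefficients of degree $>m$ are only known to be divisible by $p$, which is not enough once deeper layers need more digits.
\item Your per-level target of $\tilde O(nb+tb)$, summed over $\tilde\Theta(b)$ levels, gives $\tilde O(nb^2)$. This exceeds $n^{3/2}b$ when $b\gg\sqrt n$.
\item Brute-force root finding at internal nodes has the same problem. Over up to $tb$ nodes of weight $\ge 2$, it can cost $\tilde O(tbp)=\tilde O(nb^2)$.
\end{itemize}

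\textbf{What is missing} are the ingredients of the paper's lifting theorem, which the paper's proof of this statement simply invokes with parameter $2t$:
\begin{itemize}
\item[(i)] The weight inequality $\deg(f_x)_p^*\le w(x)$. It gives that children's weights sum to at most the parent's weight, so every layer has at most $w(\epsilon)$ nodes. Your phrase that a child's multiplicity is ``bounded by the number of roots'' is not this statement.
\item[(ii)] The valuation bound $v(f_x)\le v(f)+\mu m$. It shows that $k$ further layers below $x$ depend only on $f_x^*\bmod p^{w(x)k}$. It also shows that passing from $x_1$ to a descendant $x_2$ loses only $w(y)(m_2-m_1)$ digits of precision.
\item[(iii)] Divide and conquer on the depth, $b\to b/2$. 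The middle-layer branching polynomials are computed through a heavy-path (spine) decomposition. Spine polynomials come from recursive bisection at precision $\max_i\bigl(W(i-l)+q_i(b-i)\bigr)$, and light subtrees of weight $\le W/2$ are handled recursively, for $\tilde O(nbW)$ in total.
\item[(iv)] Root finding through $h=\gcd(g,x^p-x)$. Nodes with a single root then pay no $\sqrt p$ term, and $\sum_{d_v\ge 2}\sqrt{d_v}=O(W)$ bounds the remaining cost by $\tilde O(W\sqrt p)$.
\end{itemize}

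\textbf{Minor correction.} $d_p\le t$ bounds the total multiplicity of the repeated roots of $f_p$ only by $2t$, not $t$. A root of multiplicity $m\ge 2$ contributes at least $m-1\ge m/2$ to $d_p$.
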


We focus on integer roots in the main body of the paper. The same $p$-adic
lifting approach can also be adapted to rational roots by standard rational reconstruction; see
\cref{sec:future_work}.

The proof of this theorem relies on the following two results that are interesting on their own. The first gives an efficient way to compute sufficiently many $p$-adic digits of all $p$-adic roots (and integer roots in particular). A root of a polynomial is called a simple root if its multiplicity is equal to $1$, and otherwise it is called a repeated root. 

\begin{theorem}[Approximating $p$-adic roots]\label{thm:lifting_roots}
Let $n,b$ be positive integers and let $f \in \Z[x]$ be a polynomial of degree $n$. Let $p$ be a prime number that does not divide the leading coefficient of $f$. Let $t \ge 1$ be a given upper bound on the sum of multiplicities of the repeated roots of $f_p$ in $\mathbb{F}_p$, i.e., roots of multiplicity  greater than one. Assume that $\|f\|_\infty \le p^{bt}$. Then we may find a set of at most $n$ residue classes modulo $p^b$,
represented by non-negative integers \(<p^b\), that contains the reduction
modulo $p^b$ of every root of $f$ in $\mathbb{Z}_p$, in time $\tilde{O}(nbt + (t+\sqrt{n})\sqrt{p})$.
\end{theorem}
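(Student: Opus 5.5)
Reduce $f$ modulo $p$ and find its roots in $\mathbb F_p$, splitting them into simple and repeated ones. Then lift each kind separately: simple roots by ordinary Hensel lifting in one batch, repeated roots by a weighted lifting tree whose total work is controlled by $t$.

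\textbf{Step 1: roots modulo $p$.} Compute $f_p$ and find all its roots in $\mathbb F_p$ deterministically. Computing $g=\gcd(f_p,f_p')$ identifies the repeated roots as the roots of $g$, where $\deg g\le t$. The simple roots are the roots of $f_p/\gcd(f_p,x^p-x)$ adjusted accordingly. I would use a deterministic root finder of Shoup or baby-step/giant-step type. Its cost of the form $\tilde O(d\sqrt p)$ on a degree-$d$ polynomial should account for the $(t+\sqrt n)\sqrt p$ term. Applied to $g$ it gives $t\sqrt p$. For the squarefree part I expect the $\sqrt n\sqrt p$ term to come from a balanced splitting, and I would take the required deterministic $\mathbb F_p$ root-finding bound as given.

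\textbf{Step 2: simple roots.} Every simple root $r$ of $f_p$ has a unique $p$-adic lift. I would lift all simple roots to precision $p^b$ together, using Newton iteration with precision doubling. At each step, evaluate $f$ and $f'$ at all current approximations via fast multipoint evaluation over $\mathbb Z/p^{k}$. This costs $\tilde O(nb\log p)$ in total, which fits in $\tilde O(nbt)$.

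\textbf{Step 3: repeated roots.} This is the main obstacle. For a repeated root $r$ of multiplicity $m_r$ modulo $p$ (so $\sum m_r\le t$), expand $f(r+px)$. Dividing by the largest power of $p$ dividing all coefficients gives a new polynomial whose reduction mod $p$ has degree at most $m_r$. Its roots give the next $p$-adic digit, and the total multiplicity of the children is at most $m_r$. This defines a tree in which each node carries a weight equal to its multiplicity, and the weights at any depth sum to at most $t$. The removed powers of $p$ total at most $v_p(\operatorname{disc})$-type quantities. Here the hypothesis $\|f\|_\infty\le p^{bt}$ bounds how much content can be divided out: each division costs precision, and a tree path must keep working precision about $bt$ digits to get $b$ correct digits. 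The key point is to work only with the truncation of $f(r+px)$ to degree about $m_r$ plus a small margin. Its higher coefficients are divisible by large powers of $p$, which I would justify via the Newton polygon. That would make each level cost $\tilde O(t\cdot bt\log p)$.

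This naively gives $b$ levels, which is too many, so I would compress the tree. Any node whose weight drops to $1$ becomes a simple root and is finished by the Newton lifting of Step 2. A chain of weight-$m$ nodes with a single child is handled by a Taylor shift of the truncated degree-$m$ polynomial. The whole of Step 3 then costs $\tilde O(nbt)$: the initial Taylor expansions at the $\le t$ repeated roots cost $\tilde O(n\cdot bt)$ bits each, and the rest is charged to weights. The leaves, together with the simple lifts, give at most $n$ residue classes modulo $p^b$, since the weights sum to at most the degree. When a node's polynomial vanishes identically modulo the available precision, output all residues it represents; this is safe because the root set is then contained in them.

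Accounting: Step 1 costs $\tilde O((t+\sqrt n)\sqrt p)$, and Steps 2 and 3 cost $\tilde O(nbt)$. The step I expect to be hardest is the precision and truncation analysis in Step 3, namely showing that a working precision of $O(bt)$ digits suffices along every path while keeping the total count at $n$.
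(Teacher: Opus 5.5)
There is a genuine gap in Step 3, which carries the whole theorem. Your Steps 1 and 2 agree with the paper in outline (Step 2 is exactly its parallel Newton lifting), but Step 3 does not reach $\tilde{O}(nbt)$. Both devices you propose for avoiding $b$ sequential levels fail.

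\textbf{Truncation.} Cutting $f(r+pa)$ off at degree about $m_r$ is not justified. The coefficient of $a^k$ is $f^{[k]}(r)p^k$, which is only guaranteed to vanish modulo $p^k$. Resolving $b$ digits of an $m_r$-fold cluster, however, needs precision about $m_rb$, since a perturbation of size $p^N$ can move such a cluster by about $p^{N/m_r}$. For example, take $f(x)=(x-p\beta)^2-p^{2N}+cx^K$ with $\beta,c$ units and $K<2N$. The single term $cx^K$ changes the digits of the roots near $p\beta$ from position about $K/2$ onward. So near the top of the tree, every coefficient up to degree about $m_rb$ matters.

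\textbf{Chain compression.} A single-child chain of weight $m\ge 2$, which can have length $b$, cannot be handled by one Taylor shift. Its digits are not known in advance; each is read off the branching polynomial of the previous node. Even granting a valid reduction to degree-$w(x)$ polynomials, processing a weight-$t$ chain one layer at a time costs about $\sum_{m<b}t\cdot t(b-m)\approx t^2b^2/2$. This exceeds $nbt$ once $tb\gg n$, e.g.\ $t=\sqrt n$ and $b\gg\sqrt n$.

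\textbf{Precision bookkeeping.} The content removed is controlled neither by $v_p(\operatorname{disc} f)$ (the theorem does not assume $f$ square-free) nor by $\|f\|_\infty\le p^{bt}$ (which only bounds the input size). What holds is $v(f_x)\le v(f)+\mu m$ for $x$ in layer $m$, with $\mu=w(x\bmod p)$. The reason is that the coefficient of $a^\mu$ in $f_x$ is $f^{[\mu]}(x)p^{m\mu}$, and $f^{[\mu]}(x)$ is a $p$-adic unit. This bound is what shows that $f_x^*$ to precision $w(x)k$ determines the next $k$ layers. It also shows that your fallback of outputting all residues of a node never occurs, which matters because that fallback would violate the bound of $n$ classes.

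\textbf{How the paper gets past this.} It uses divide and conquer in depth:
\begin{enumerate}
\item Build layers $0,\dots,\lceil b/2\rceil$ recursively from $f\bmod p^{W\lceil b/2\rceil}$.
\item Compute all branching polynomials at layer $\lceil b/2\rceil$ at once by a heavy-path decomposition: a divide and conquer along the spine of children of weight $>W/2$, plus recursion into the light children, whose weights are at most $W/2$ and sum to at most $W$.
\item Recurse below each middle node $x_i$ with $W$ replaced by $w(x_i)$.
\end{enumerate}
Since $\sum_i w(x_i)\le W$ and the precision halves with depth, each of the $O(\log b)$ levels costs $\tilde{O}(nbW)$. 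This gives $\tilde{O}(W(nb+\sqrt p))$ for a repeated root of multiplicity $W$.

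\textbf{Root finding inside the tree.} Your accounting omits the $\mathbb{F}_p$ root finding at the up to $bt$ tree nodes. An $\tilde{O}(d\sqrt p)$ root finder at every node costs up to $bt\sqrt p$, not $t\sqrt p$. The paper handles nodes with at most one distinct root via $\gcd(\cdot,x^p-x)$ alone. It bounds the remaining nodes by $\sum_{d_v\ge 2}\sqrt{d_v}\le 2\sum_{d_v\ge 2}(d_v-1)\le 2W$, using that the number of leaves is at most $W$.

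\textbf{Smaller points.}
\begin{itemize}
\item Under the theorem's hypothesis, $\deg\gcd(f_p,f_p')$ need not be $\le t$, because repeated irreducible factors of higher degree contribute. You should intersect with $x^p-x$.
\item The initial expansion at a repeated root $r$ should use precision $m_rb$, not $bt$; otherwise the total becomes $\tilde{O}(nbt^2)$.
\item The $\tilde{O}(\sqrt{np}+n)$ root finder you take as given is proved in the paper by evaluating $\prod_{j<k}f(x+j)$, with $k=\lceil\sqrt{p/n}\rceil$, at the multiples of $k$.
\end{itemize}
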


 In our proof of \Cref{thm:finding_integer_roots}, the parameter $t$ will eventually be set to $\lfloor\sqrt{n}\rfloor$.

Another contribution that is interesting on its own is the following fast deterministic verification procedure.

\begin{theorem}[Verifying roots]\label{thm:verifying_roots}
Let $n,b$ be positive integers and let $f \in \Z[x]$ be a polynomial of degree $n$, satisfying $\|f\|_{\infty} < 2^b$. In addition, let $x_1,\ldots,x_m$ be distinct integers with $|x_i| < 2^b$ and $m \le n$. Then we may determine for each $1\le i \le m$ whether $f(x_i)=0$ in $\tilde{O}(nb+\min(n^2,nb^2))$ time.
\end{theorem}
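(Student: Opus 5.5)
The plan is to prove the two bounds $\tilde{O}(nb+n^2)$ and $\tilde{O}(nb^2)$ with two separate algorithms and run whichever bound is smaller. Both follow one template: a cheap deterministic filter that every genuine root passes, a counting argument showing that few non-roots survive, and exact evaluation of $f$ at the survivors. Exact evaluation at one point $x$ with $|x|<2^b$ costs $\tilde{O}(nb)$ by divide-and-conquer Horner, since $f(x)$ has $O(nb+\log n)$ bits. So it suffices that the number of survivors is $O(n/b+\operatorname{polylog})$ in the first algorithm and $O(b)$ in the second, after a filter costing $\tilde{O}(nb+n^2)$, respectively $\tilde{O}(nb^2)$.

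\emph{First algorithm.} The basic fact is $f(y)\equiv f(x_i)\pmod{x_i-y}$, because $x-y\mid f(x)-f(y)$. Hence every root $x_i$ satisfies $x_i-y\mid f(y)$ for every integer $y\neq x_i$. For a fixed small $y$ I would compute the integer $f(y)$ exactly in $\tilde{O}(n(b+\log|y|))$. I would then reduce it simultaneously modulo all $x_i-y$ with a remainder tree. The moduli have $O(b)$ bits each, so one $y$ costs $\tilde{O}(nb+mb)=\tilde{O}(nb)$. A non-root $x_i$ survives the values $y_1,\dots,y_k$ only if $\operatorname{lcm}_j(x_i-y_j)$ divides the nonzero integer $f(x_i)$, whose absolute value is below $(n+1)2^{b(n+1)}$.

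To keep the cost down I would make the filter adaptive. In round $r$, I would test a new $y$ only on the current survivors, shrinking the set geometrically. I would also reuse the reduction: compute $f \bmod \prod_i (x_i-y)$ once by a subproduct tree, rather than recomputing $f(y)$ from scratch. This should make the total cost of the rounds $\tilde{O}(nb+n^2)$.

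\emph{Second algorithm.} The candidate $x_i=0$ is handled directly by checking $a_0=0$. Every other integer root divides $a_0$, and when $a_0=0$ we divide by the appropriate power of $x$ first. The filter is then the test $a_0^{\,b}\mid f(x_i)$, performed modulo $a_0^{\,b}$, which has $O(b^2)$ bits. Reducing $f$ modulo $a_0^b$ costs $\tilde{O}(nb)$, and multipoint evaluation of $f \bmod a_0^b$ at $m\le n$ points costs $\tilde{O}(nb^2)$. For a root $x$ we have $x\mid a_0$, and writing $a_0=-\sum_{k\ge1}a_kx^k$ gives the divisibility chain that makes the test pass. The same chain shows that a survivor has $|f(x_i)|$ divisible by a large power of $a_0$. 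Comparing sizes, this forces $|a_0|^b\le |f(x_i)|$ unless $f(x_i)=0$, so that $O(b)$ (or even $O(1)$) candidates survive and can be checked exactly.

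The main obstacle is the survivor bound in the first algorithm: showing that after the chosen values $y$ the lcm of the $x_i-y_j$ exceeds $(n+1)2^{b(n+1)}$ for all but $O(n/b)$ non-roots. I would first choose the $y_j$ as widely spaced values, such as multiples of $2^{b+1}$, so that each modulus $x_i-y_j$ has about $b$ bits and pairwise gcds are controlled by the differences $y_j-y_k$. If per-candidate arguments are too weak, I would switch to a global count. Each $y$ is charged against the total bit-length $\sum_i\log|f(x_i)|\le m(nb+\log n)$. A large modulus dividing many $f(x_i)$ then consumes a large share of this budget, which bounds the survivors in aggregate rather than individually.
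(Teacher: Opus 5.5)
Your outer structure (two algorithms, take the better bound, strip powers of $x$ first) matches the paper. However, both algorithms have genuine gaps.

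In the first algorithm the template itself fails. Every genuine root passes every sound filter, so the set you evaluate point by point contains all candidates that are roots, and there can be far more of them than $O(n/b+\mathrm{polylog})$. For instance $f=\prod_{j=1}^{n}(x-2^n-j)$ has $\|f\|_\infty\le(2^n+n+1)^n<2^{n^2+n}$. So with $b=n^2+n$ all $n$ candidates may be roots, and $n$ separate evaluations, each of which must read the $nb$-bit input, cost $\Omega(n^4)$ against the target $\tilde O(nb+n^2)=\tilde O(n^3)$.

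The missing idea is to make the filter a certificate. The paper first evaluates all ``small'' candidates $x_i\le 32n^2 2^{b/n}$ together by multipoint evaluation. It then tests $x_i-y\mid f(y)$ for the $2n$ \emph{consecutive small} values $y\in[a,a+2n-1]$, where $a\le 2n^2$ is chosen (via the roots of $f$ modulo a prime near $2n^2$) so that $f$ has no zero there. A surviving non-small candidate then satisfies $\binom{x_i-a}{2n}\mid\operatorname{lcm}_y(x_i-y)\mid f(x_i)$ while $\binom{x_i-a}{2n}>(n+1)2^bx_i^n\ge|f(x_i)|$. Hence $f(x_i)=0$, and the candidate is output with no exact evaluation.

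Your widely spaced $y_j\in 2^{b+1}\mathbb{Z}$ go the wrong way. Each $f(y_j)$ then has $\Theta(nb)$ bits, and certifying a candidate near $2^b$ through the lcm needs $\Theta(n)$ such values, so this costs $\Omega(n^2b)$. The point of small $y$ is that $\log|f(y)|=\tilde O(n+b)$.

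Finally, geometric shrinking of the survivor set is only asserted, and roots never leave it. The paper's cost bound rests on a nontrivial estimate: any $t\le k/2$ candidates surviving the first $k$ rounds have product $2^{\tilde O(b)}$. This is proved using $k$-rough parts of the lcm's, the fact that the rough part of $\prod_y\gcd(z_i-y,z_j-y)$ divides $z_i-z_j$, and a lower bound on the rough part of a falling factorial. Hence the candidates entering round $r$ have product $2^{\tilde O(nb/r)}$, round $r$ costs $\tilde O(n+b+nb/r)$, and the harmonic sum gives $\tilde O(n^2+nb)$.

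In the second algorithm the survivor bound, which is the whole difficulty, is not established. From $a_0^b\mid f(x_i)$ and $f(x_i)\neq 0$ you only get $|f(x_i)|\ge|a_0|^b$. Since $|a_0|^b<2^{b^2}$ while $|f(x_i)|$ can be $2^{\Theta(nb)}$, no contradiction arises in the regime $b<\sqrt n$ where this algorithm is used. The $O(1)$ claim is false outright: $\prod_{j=1}^{k}(x-j)$ with $b\approx k\log k$ has $\Theta(b/\log b)$ integer roots, all of which survive. Also, the test $a_0^b\mid f(x_i)$ alone is vacuous when $a_0=\pm1$, so $x_i\mid a_0$ must be an explicit filter.

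The paper counts survivors with the lifting tree. Let $k=v_p(a_0)\ge1$ and $g(y)=f(py)$; then $\deg g_p^*\le k$. For every $0\le x<p^b$ with $p\mid x$ and $p^{bk}\mid f(x)$, the base-$p$ digits of $x/p$ trace a path to layer $b-1$ of the lifting tree of $g$. The valuation bound $v(g_{z_0})\le k(b-1)$ along the path is what keeps it inside the tree, and every layer has at most $\deg g_p^*\le k$ nodes. Each survivor $x_i>1$ divides $a_0$, so it is divisible by some prime $p\mid a_0$. Therefore at most $1+\sum_{p\mid a_0}v_p(a_0)\le b+1$ candidates survive, and evaluating them exactly costs $\tilde O(nb^2)$.
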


To apply these theorems we shall need an appropriate prime number $p$ such that no root has too large a multiplicity modulo $p$, along with some other technical properties. This is given in the following theorem.

\begin{theorem} \label{thm:finding_p}
Let $n,b$ be positive integers and let $f \in \Z[x]$ be a square-free polynomial of degree $n$, satisfying $\|f\|_{\infty} < 2^b$.  Let $1\le t \le n$ be an additional integer parameter. Then we may find a prime number $p$ such that the following three requirements are satisfied:
\begin{enumerate}
    \item $p \le \tilde{O}(\frac{nb}{t})$ \label{req:p_is_small}
    \item $\deg(\gcd(f_p,f_p')) < t$ \label{req:multiplicities_are_small}
    \item $p$ does not divide the leading coefficient of $f$\label{req:pndiv_a_n}
\end{enumerate}
in $\tilde{O}(\frac{n^2b}{t})$ time.

\end{theorem}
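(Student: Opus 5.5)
The plan is to iterate over the primes $p = 2,3,5,\dots$ in increasing order. For each prime, test requirements \ref{req:pndiv_a_n} and \ref{req:multiplicities_are_small} directly, and stop at the first prime that passes both. The work splits into two parts: a counting argument showing that the first good prime is $\tilde{O}(nb/t)$, and a cost analysis of each test.

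\textbf{Counting bad primes.} Since $f$ is square-free, $R := \operatorname{res}(f,f') \neq 0$. By Hadamard's bound, $\|f\|_\infty<2^b$ and $\|f'\|_\infty < n2^b$, so
\[
\log|R| = O(n(b+\log n)).
\]
The key claim is the following. If $p \nmid a_n$ and $\deg\gcd(f_p,f_p') = d$, then $p^{d}$ divides $R$. To prove it, I would pass to a factorization over the $p$-adic integers, or argue via the Sylvester matrix. Reducing modulo $p$, the Sylvester matrix of $f_p, f'_p$ has rank deficiency $d$ (the leading coefficient of $f$ is a unit; if $\deg f'_p$ drops, I handle it by the standard resultant identity, up to a power of $a_n$). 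Hence $d$ rows are $\mathbb{F}_p$-linearly dependent. Performing unimodular integer row operations, or a Smith-form argument, then shows that $p^{d}$ divides the determinant. This divisibility step is the main obstacle, because the degree of $f_p'$ can drop; I would state it via the Smith normal form over $\mathbb{Z}_{(p)}$ to avoid that issue.

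Given the claim, every prime violating requirement \ref{req:multiplicities_are_small} satisfies $p^{t}\mid R$. Because these primes are distinct,
\[
\sum_{p\ \mathrm{bad}} t\log p \le \log|R|.
\]
Primes dividing $a_n$ are also bad, but there are only $O(b)$ of them. So the product of all bad primes is at most $2^{O(nb'/t + b)}$ with $b' = b+\log n$. There are therefore $O(nb'/t)$ bad primes, since $t \le n$ gives $b \le nb/t$. Among the first $k$ primes, whose product is $e^{(1+o(1))k\log k}$, some prime must be good once $k\log k \gg nb'/t$. So the search terminates at $p = \tilde{O}(nb/t)$, which gives requirement \ref{req:p_is_small}.

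\textbf{Running time.} Generate the primes up to the bound $P = \tilde{O}(nb/t)$ with a sieve, in time $\tilde{O}(P)$. For each prime $p\le P$, perform the following steps:
\begin{enumerate}
\item Check $p\nmid a_n$, in time $\tilde{O}(b)$.
\item Reduce $f$ modulo $p$. Done naively this costs $\tilde{O}(nb)$ per prime, which over all $\tilde{O}(nb/t)$ primes is $\tilde{O}(n^2b^2/t)$. That is too slow. Instead, compute all reductions $f \bmod p$ at once by simultaneous modular reduction (a remainder tree) on each coefficient, for $p\le P$. Per coefficient this costs $\tilde{O}(b+P)$, so over the $n+1$ coefficients the total is $\tilde{O}(nb + nP) = \tilde{O}(n^2b/t)$.
\item Compute $\gcd(f_p,f'_p)$ with fast gcd in $\tilde{O}(n)$ operations in $\mathbb{F}_p$, that is $\tilde{O}(n)$ bit operations since $\log p = O(\log(nb))$. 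Over all primes this gives $\tilde{O}(n\cdot nb/t)$.
\end{enumerate}
Summing the three parts yields $\tilde{O}(n^2b/t)$, as required.
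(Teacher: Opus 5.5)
Your proposal is correct and follows essentially the same route as the paper. Both bound $|\operatorname{res}(f,f')|$, show $p^d \mid \operatorname{res}(f,f')$ via the rank of the reduced Sylvester matrix, count bad primes, then sieve, reduce coefficients with remainder trees, and run fast gcds. For the divisibility step the paper lifts a mod-$p$ column elimination, which is equivalent to your Smith-form argument over $\mathbb{Z}_{(p)}$. Two small points need attention.

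First, the drop in $\deg f'_p$ is not actually an obstacle, and the rank bound you assert still needs its one-line justification. Use the formal degrees: the reduced matrix represents $(P,Q)\mapsto f_pP+f'_pQ$ on $\mathbb{F}_p[x]_{<n-1}\oplus\mathbb{F}_p[x]_{<n}$. Its image lies in the multiples of $\gcd(f_p,f'_p)$ of degree $<2n-1$. That gives rank at most $2n-1-d$ directly, with no resultant identity needed.

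Second, your cost accounting for the leading-coefficient test is off. Checking $p\nmid a_n$ at cost $\tilde{O}(b)$ separately for each of the $\tilde{O}(nb/t)$ primes totals $\tilde{O}(nb^2/t)$. This exceeds $\tilde{O}(n^2b/t)$ when $b>n$. Instead, read $a_n \bmod p$ off the remainder tree you already build in your second step. The paper does exactly this with a single multipoint-modulo pass on $a_n$.
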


\subsection{Proof overview}

We now give an overview of the ideas that go into the proofs of our results. Schematically, the structure of the proof is:

\begin{center}
\begin{tikzpicture}[
    node distance=1.7cm,
    box/.style={
        draw,
        rounded corners,
        align=center,
        minimum width=3.2cm,
        minimum height=0.9cm,
        font=\small
    },
    arrow/.style={->, thick}
]
\node[box] (p) {Find a good prime $p$\\
$\deg\gcd(f_p,f_p')< t$};
\node[box, right=of p] (lift) {Lift roots\\
to precision $p^b$};
\node[box, right=of lift] (verify) {Verify candidates\\
over $\mathbb Z$};

\draw[arrow] (p) -- (lift);
\draw[arrow] (lift) -- (verify);
\end{tikzpicture}
\end{center}

\paragraph{Finding $p$ (\cref{thm:finding_p}).} Given a parameter $1\le t\le n$, this theorem lets us find a prime
$
    p \le \widetilde O\!\left(\frac{nb}{t}\right)
$
that does not divide the leading coefficient of $f$ and satisfies
\[
    \deg\bigl(\gcd(f_p,f_p')\bigr)<t.
\]

The main point is to guarantee the last inequality. For this purpose, we use the resultant. We prove that, for every prime $p$ that does not divide the leading coefficient of $f$,
\[
    \deg\bigl(\gcd(f_p,f_p')\bigr)
    \le v_p\bigl(\operatorname{res}(f,f')\bigr).
\]
Since $f$ is square-free, the resultant is nonzero, and a standard bound gives $
    |\operatorname{res}(f,f')|\le 2^{\widetilde O(nb)}.
$
Therefore the number of primes for which
$\deg(\gcd(f_p,f_p'))\ge t$ is at most $\widetilde O(nb/t)$. The leading coefficient has at most $b$ prime divisors, which is also $\widetilde O(nb/t)$ since $t\le n$. Hence, by trying all primes up to a sufficiently large bound of size $\widetilde O(nb/t)$, we are guaranteed to find a prime satisfying all the requirements of \cref{thm:finding_p}.

\paragraph{Lifting roots (\cref{thm:lifting_roots}).}
Given the prime $p$, we want to approximate the roots of $f$ in $\mathbb Z_p$, where $\bZ_p$ denotes the ring of $p$-adic integers. More precisely, we wish to compute a list of at most $n$ residue classes modulo $p^b$, represented by non-negative integers smaller than $p^b$, such that every root of $f$ in $\mathbb Z_p$ reduces to one of these classes. For non-negative integer roots this gives actual candidates: if $x$ is a non-negative integer root, then $x<2^b\le p^b$, so the residue representative modulo $p^b$ is just $x$. Negative integer roots are handled by applying the same procedure to $f(-x)$. The resulting candidates are then passed to \cref{thm:verifying_roots}.

To prove \cref{thm:lifting_roots}, we construct a lifting tree. It is best to think of its nodes as base-$p$ digit strings. A node in layer $m$ represents a prefix of length $m$, with value
\[
    x_0 = \delta_0+\delta_1p+\cdots+\delta_{m-1}p^{m-1},
\]
where the $\delta_i$ are $p$-adic digits, i.e., integers in $\{0,\ldots,p-1\}$.
Its children have the same first $m$ digits and one additional digit in position $m$. Descendants of a node represent possible $p$-adic lifts of that prefix.

The root is represented by the empty string $\epsilon$. Its value is $0$, but we distinguish  the empty string $\epsilon$ from the one-digit string $0$. For a node $x_0$ in layer $m$, define
\[
    f_{x_0}(a):=f(x_0+p^m a).
\]
If a $p$-adic root has prefix $x_0$, then it has the form
\[
    x=x_0+p^m a
\]
for some $a\in \mathbb Z_p$, and therefore
\[
    f_{x_0}(a)=f(x_0+p^m a)=f(x)=0.
\]
Let $p^{v(f_{x_0})}$ denote the largest power of $p$ that divides all coefficients of $f_{x_0}$.
Dividing $f_{x_0}$ by  $p^{v(f_{x_0})}$  we obtain the polynomials
\begin{align*}   
(f_{x_0})^* &:= \frac{f_{x_0}}{p^{v(f_{x_0})}} ,\\
(f_{x_0})_p^* &:= \frac{f_{x_0}}{p^{v(f_{x_0})}} \bmod p.
\end{align*}
Note that for the next digit
\[
    \delta = a\bmod p
\]
it holds that
\[
    (f_{x_0})_p^*(a)=0
\]
over $\mathbb F_p$. For every such digit $\delta$, we add the child represented by the digit string with value $x_0+p^m\delta$, and label the edge by $\delta$.

We next bound the size of the lifting tree. For this purpose, we assign weights to nodes. Let $x\ne \epsilon$ be a node with parent $y$, reached from $y$ by the digit $\delta$. The weight $w(x)$ is the multiplicity of $\delta$ as a root of $(f_y)_p^*$. For the root we set
\[
    w(\epsilon):=\deg (f_\epsilon)_p^*=\deg f_p^*.
\]

An important property that we prove, which is crucial for controlling the size of the tree, is
\[
    \deg (f_x)_p^* \le w(x).
\]
It follows that the sum of the weights of the children of a node $x$ is at most $w(x)$: the children correspond to distinct roots of $(f_x)_p^*$, and the sum of their multiplicities is at most the degree of $(f_x)_p^*$.

By induction on the layers, the total weight of every layer is at most
\[
    w(\epsilon)=\deg f_p^*\le n.
\]
Since every node has positive weight, every layer contains at most $n$ nodes.

This size bound alone is not enough for an efficient algorithm. The difficulty is that, for each node, we have to compute its branching polynomial $f_{x_0}^*$, and the coefficients of these polynomials can be very large. We therefore prove that only bounded $p$-adic precision is needed. We say that a polynomial is known to precision $k$ if it is known modulo $p^k$.

The precision statement we use is the following: to determine the next $k$ layers in the subtree below a node $x$, it is enough to know the normalized branching polynomial $f_x^*$ to precision $w(x)\cdot k$.

We then give an efficient algorithm for constructing layers $0$ through $b$ of the lifting tree. The basic operation is this: suppose that we know the normalized branching polynomial $f_{x_1}^*$ at a node $x_1$, and we know the digits along the path from $x_1$ to a descendant $x_2$. Then we can compute the normalized branching polynomial $f_{x_2}^*$ losing only a controlled amount of precision.

Figures~\ref{fig:spine} (page~\pageref{fig:spine}), \ref{fig:tree-recursion} (page~\pageref{fig:tree-recursion}) and \ref{fig:finishing} (page~\pageref{fig:finishing}) illustrate the flow of the tree construction.

The construction is by divide and conquer on the number of layers. First we construct the first $b/2$ layers of the lifting tree of $f$. For each node $x$ in the middle layer, the descendants of $x$ are determined by $f_x^*$. Thus, once $f_x^*$ is known to sufficient precision, we may recursively construct the remaining layers below $x$. This gives the first $b$ layers of the lifting tree of $f$.

It remains to solve the following problem efficiently: given the first $k$ layers of the lifting tree, compute the branching polynomials of all nodes in layer $k$ to the required precision.

Computing all layer-$k$ branching polynomials independently would be too expensive. Instead, we decompose the known tree into a heavy path, which we call the \emph{spine}, and a collection of light subtrees branching off it. We compute the branching polynomials along the spine using a divide-and-conquer procedure, and then recurse on the light subtrees. The weight inequalities above ensure that the total work in the light subtrees can be charged to their weights, giving the required running time.

\paragraph{Verifying roots (\cref{thm:verifying_roots}).}
We are given $m$ candidates $x_1,\dots,x_m$ and need to decide which of them are roots of $f$. We give two verification algorithms. The first runs in $\widetilde O(n^2+nb)$ time and is best when $b$ is large. The second runs in $\widetilde O(nb^2)$ time and is best when $b$ is small.

The algorithms assume, after the standard reductions in \cref{sec:verifying}, that the candidates are positive and that the constant coefficient $a_0$ is nonzero.

The first algorithm works as follows. We first check all small candidates, namely candidates with
\[
    x_i \le 32n^2 2^{b/n},
\]
using multipoint evaluation (see \eqref{comp:multipoint_evaluation} in \cref{sec:known}), and then remove them from the candidate list. Next, we find an interval $[a,a+2n-1]$, with $a\le 2n^2$, on which $f$ does not vanish. We then iterate over $y=a,\dots,a+2n-1$ and, for all remaining candidates $x_i$ simultaneously, test whether
\[
    x_i-y \mid f(y)
\]
using multipoint-modulo (see \eqref{comp:multipoint_modulo} in \cref{sec:known}). Candidates that fail one of these tests are eliminated. Each value of $y$ gives one elimination round.

The correctness argument is based on the congruence
\[
    f(x_i)\equiv f(y) \pmod {x_i-y}.
\]
Thus, if $x_i$ survives the test for a given $y$, then $x_i-y$ divides $f(x_i)$. If $x_i$ survives all tests, then
\[
    \operatorname{lcm}\bigl(x_i-a, x_i-(a+1),\ldots,x_i-(a+2n-1)\bigr)
    \mid f(x_i).
\]
We prove that, unless $x_i$ is one of the small candidates already removed, this least common multiple is larger than $|f(x_i)|$. Hence any remaining candidate must satisfy $f(x_i)=0$.

For the running time, we prove that the total bit size of the surviving candidates decreases quickly during the elimination rounds. More precisely, if $S_r$ is the set of candidates entering round $r$, then
\[
    \sum_{z\in S_r}\log z = \widetilde O\!\left(\frac{nb}{r}\right).
\]
Therefore (using multipoint modulo \cref{sec:known}\eqref{comp:multipoint_modulo}) round $r$ costs
\[
    \widetilde O\!\left(n+b+\frac{nb}{r}\right),
\]
and summing over $r=1,\dots,2n$ gives
\[
    \sum_{r=1}^{2n} \widetilde O\!\left(n+b+\frac{nb}{r}\right)
    = \widetilde O(n^2+nb),
\]
using the harmonic series.

The algorithm for small $b$ is simpler. It first tests whether
\[
    x_i \mid a_0
\]
and then tests whether
\[
    |a_0|^b \mid f(x_i).
\]
Only the candidates surviving both tests are evaluated individually using \cref{sec:known}\eqref{comp:substitute_in_poly}.

To prove the efficiency of this algorithm, we show that at most $b+1$ candidates survive the two tests. The proof uses lifting-tree bounds similar to those discussed earlier. The idea is that, if a survivor $x_i$ is divisible by a prime $p\mid a_0$, then the relevant prefixes of $x_i/p$ appear in the lifting tree of $f_0(y)=f(py)$. This implies that, for each prime $p\mid a_0$, the number of surviving candidates divisible by $p$ is at most $v_p(a_0)$. Since
\[
    \sum_{p\mid a_0} v_p(a_0) \le \log_2 |a_0| \le b,
\]
there are at most $b$ such survivors, plus the possible candidate $x_i=1$. Thus only $b+1$ candidates are checked individually, giving the $\widetilde O(nb^2)$ bound.  

\subsection{Related work}

We discuss the previous work most relevant to two algorithmic
ingredients used in this paper: root finding over $\mathbb F_p$ and lifting
roots to $p$-adic precision.

\paragraph{Root finding over $\mathbb F_p$}

Deterministic root finding over finite fields has been studied extensively; see,
for example, \cite{KaltofenShoup,vzgg2013}. In particular, Grenet,
van der Hoeven, and Lecerf give deterministic algorithms based on Graeffe
transforms~\cite{GrenetHoevenLecerf}. Their algorithms are particularly efficient
when $p-1$ is smooth, but in the general prime-field case their methods imply,
up to logarithmic factors, a deterministic complexity bound of the form
$$
\tilde{O}(\sqrt{np}+n)
$$
for finding the roots in $\mathbb F_p$ of a degree-$n$ polynomial. We instead
give a simpler baby-step giant-step argument, which gives the same complexity
bound over $\mathbb F_p$ without assuming that $p-1$ is smooth.

\paragraph{$p$-adic root lifting}

The idea of describing roots by a recursive tree is not new. Berthomieu,
Lecerf and Quintin studied algorithms for computing roots of univariate
polynomials over certain complete local rings to a prescribed finite precision, which includes in particular the $p$-adic ring $\mathbb Z_p$~\cite{BerthomieuLecerfQuintin}.
Neiger, Rosenkilde and Schost compute roots of polynomials over
$K[[x]]$ to precision $d$, including the case of multiple roots where ordinary
Newton iteration does not directly apply~\cite{NeigerRosenkildeSchost}. Their
work also relates this kind of recursive root representation to earlier
algorithms of Roth--Ruckenstein \cite{RothRuckenstein} and Alekhnovich \cite{Alekhnovich}.

There is another related direction, namely algorithms for factoring polynomials
over $p$-adic rings. For example, Poteaux and Weimann give a
divide-and-conquer algorithm for computing a full factorisation over a complete
discrete valuation ring, such as $\mathbb Z_p$~\cite{PoteauxWeimannMontes}.
Such an algorithm can be used to approximate the linear factors, and therefore
the $p$-adic roots, when they exist.

However, $p$-adic factorization requires
factorization over the residue field. The complexity in~\cite{PoteauxWeimannMontes} also includes residual
factorizations over finite extensions of the residue field. This is too
expensive for our application. Under the assumption that the sum of
multiplicities of the repeated roots of $f \bmod p$ is at most $t$, we construct
the required part of the lifting tree, and hence the required root
approximations, in time
$$
\tilde{O}\bigl(nbt+(t+\sqrt n)\sqrt p\bigr).
$$
Thus, the novelty is not the tree viewpoint itself, but the efficient construction
of the relevant $p$-adic lifting tree with the complexity bound needed for our
application.

\subsection{Organization}
The paper is organized as follows. In \Cref{sec:prelim} we introduce notation
and recall several standard algorithmic tools. In \Cref{sec:roots-Fp} we discuss algorithms for finding roots of polynomials over finite
fields and for computing their multiplicities. In \cref{sec:finding-p} we prove \cref{thm:finding_p}, which shows that
an appropriate prime $p$ can be found efficiently. Next, in \Cref{sec:lifting} we develop the lifting procedure for non-simple
roots modulo powers of $p$. The fast verification procedure is given in \Cref{sec:verifying}. We combine everything to prove \Cref{thm:finding_integer_roots} in \Cref{sec:final-proof}.

\section{Preliminaries}\label{sec:prelim}

\subsection{Notations}

Throughout this paper, $p$ will always denote some prime number.
For a polynomial $f \in \mathbb{Z}[x]$ and a prime number $p$, let $f_p \in \mathbb{F}_p[x]$ denote the reduction of $f$ modulo $p$. For a non-zero integer $x$, $v_p(x)$ is defined as the largest integer $k$ such that $p^k \mid x$. For a polynomial $f = a_0 + a_1x + \dots + a_nx^n \in \mathbb{Z}[x]$ and a prime $p$, we define $v_p(f) = \min_i(v_p(a_i))$. Let $\pi(n)$ denote the number of primes not larger than $n$. We use throughout the fact that $\pi(n) = \Theta(\frac{n}{\log n})$, which follows from the prime number theorem.

\subsection{Algorithmic tools}\label{sec:known}

Our algorithms will rely on several known tools from the literature, such as Integer multiplication,
polynomial multiplication, division with remainder, gcds, product trees,
and multipoint evaluation. We list them below. Throughout, $\tilde{O}$ suppresses
polylogarithmic factors.
\begin{enumerate}
    \item\label{comp:polynomial_arithmetic} Two polynomials of degree $<n$ over a (commutative,
    unital) ring can be multiplied in $\tilde{O}(n)$ ring
    operations. They can also be divided with remainder, in $\tilde{O}(n)$ ring operations, given that the divisor has an invertible leading coefficient.
    \cite{cantorkaltofen1991}, \cite[Ch.~9]{vzgg2013}.

    \item\label{comp:integer_arithmetic} Two $n$-bit integers can be multiplied, and divided with
    remainder, in $\tilde{O}(n)$ time \cite[Ch.~9,10]{vzgg2013}.

    \item\label{comp:product_tree} Let $R$ be a commutative ring and $f_1,\dots,f_n \in R[x]$. Their product $f_1 f_2 \cdots f_n$ can be computed in $\tilde{O}(\sum \deg(f_i))$
    ring operations by the product tree: multiply the $f_i$ in pairs and recurse. \cite[Ch.~10]{vzgg2013}.

    \item\label{comp:multipoint_modulo} Given an integer $x$ and moduli $m_1,\dots,m_k$, all
    residues $x \bmod m_i$ can be computed in $\tilde{O}(\log x + \sum_i \log m_i)$ time. Similarly, given a polynomial $f(x) \in R[x]$ and moduli $m_1(x),\ldots,m_k(x) \in R[x]$, the polynomials $f(x) \bmod m_i(x)$ can be computed in $\tilde{O}(\deg f + \sum_i \deg m_i)$ operations over $R$. The idea is to build a product tree of the moduli, and then traverse it starting with $f$
    at the root, taking modulo the relevant subproduct at each step.   \cite[Ch.~10]{vzgg2013}.
    
    \item\label{comp:multipoint_evaluation} A polynomial $f$ of degree $<n$ over a ring can be
    evaluated at $m$ points $a_1,\ldots,a_m$ in $\tilde{O}(n+m)$ ring operations. This is essentially \cref{sec:known}\eqref{comp:multipoint_modulo} on $f$ modulo $x-a_i$. \cite[Ch.~10]{vzgg2013}.

    \item\label{comp:gcd} The $\gcd$ of two polynomials of degree $\le n$ over $\Fp$ can be
    computed in $\tilde{O}(n)$ field operations, via the fast Euclidean (Half-GCD) algorithm
    \cite[Ch.~11]{vzgg2013}.

    \item\label{comp:substitute_in_poly} Let $f \in \mathbb{Z}[x]$ have degree $n$ with $b$-bit
    coefficients, and let $x \in \mathbb{Z}$ have $b$ bits. Then $f(x)$ can be computed in
    $\tilde{O}(nb)$ bit operations by binary splitting: writing $f = f_0 + x^{\lceil n/2 \rceil} f_1$
    and recursing yields $T(n) = 2T(n/2) + \tilde{O}(nb) = \tilde{O}(nb)$. \cite[\S4.4]{brent2010modern}.

    \item\label{comp:taylor_shift} Let $R$ be a commutative ring, $f \in R[x]$ of degree $\le n$,
    and $a,b \in R$. Then $f(ax+b)$ can be computed in $\tilde{O}(n)$ ring operations, via divide-and-conquer.
    \cite[\S2]{HartNovocin2011}.

    \item\label{comp:sieve} The first $B$ primes can be found in $\tilde{O}(B)$ time using the
    sieve of Eratosthenes. \cite[Ch.~3]{crandallpomerance2005}.
\end{enumerate}

\section{Roots of polynomials over $\F_p$}\label{sec:roots-Fp}

In this section we explain how to find the roots of a polynomial in $\mathbb F_p[x]$, together with their multiplicities. We use this mainly in \cref{sec:lifting}.

\begin{lemma}
\label{comp:roots_mod_p}
Let $p$ be a prime number, and let $f \in \mathbb{F}_p[x]$ be a nonzero
polynomial of degree $n$. Then all roots of $f$ in $\mathbb{F}_p$ can be
found in time
\[
    \tilde{O}(\sqrt{pn}+n).
\]
\end{lemma}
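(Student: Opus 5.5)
We will use a baby-step giant-step argument, as announced in the related-work discussion. First reduce to the case where $f$ is square-free and splits into distinct linear factors over $\mathbb F_p$: compute $g=\gcd(f, x^p-x)$. Here $x^p \bmod f$ is obtained by repeated squaring in $\tilde O(n\log p)$ field operations, and the gcd is \cref{sec:known}\eqref{comp:gcd}. The roots of $g$ are exactly the distinct roots of $f$ in $\mathbb F_p$, each with multiplicity one, and $d:=\deg g\le \min(n,p)$. If $p\le n$, we simply evaluate $f$ at all $p$ points by multipoint evaluation \cref{sec:known}\eqref{comp:multipoint_evaluation} in $\tilde O(n)$ time. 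So assume $p>n$.

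Set $s:=\lceil\sqrt{p/d}\,\rceil$ (or $\lceil\sqrt{p/n}\,\rceil$; either suffices). Every $\alpha\in\mathbb F_p$ can be written uniquely as $\alpha = js+i$ with $0\le i<s$ and $0\le j< \lceil p/s\rceil$. We form the baby-step polynomial
\[
B(x):=\prod_{i=0}^{s-1}(x+i)
\]
with the product tree \cref{sec:known}\eqref{comp:product_tree} in $\tilde O(s)$ operations. For each giant step $j$ we want to know whether $g$ has a root in the block $\{js,\dots,js+s-1\}$. This happens exactly when $\gcd\bigl(g(x),B(x-js)\bigr)\ne 1$, or equivalently when $g(y+js)$ shares a root with $B(-y)$.

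To batch these tests, we compute $h(y):=\prod_i (y-\alpha_i)$. It is cheaper to avoid the unknown $\alpha_i$ altogether and use the resultant formulation: block $j$ contains a root iff $\operatorname{res}_x\bigl(g(x),B(x-js)\bigr)=0$. The quantity $R(z):=\operatorname{res}_x\bigl(g(x),B(x-z)\bigr)=\pm\prod_{k} B(\alpha_k - z)$ is a polynomial in $z$ of degree $ds$. Alternatively, and more simply, we take $G(z):=\prod_{i=0}^{s-1} g(z+i)$ reduced appropriately. The cleanest version is the following. The polynomial $P(z):=\operatorname{res}_x(g(x), B(x-z))$ has degree $ds\approx\sqrt{pd}$, and we evaluate it at the $\lceil p/s\rceil\approx\sqrt{pd}$ points $z=js$ by multipoint evaluation in $\tilde O(\sqrt{pd})$ operations.

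Computing $P$ itself is the main obstacle. I would obtain it as $\prod_{i=0}^{s-1} g(z-i)$ up to sign, since $\prod_k\prod_i(\alpha_k+i-z)=\prod_i (\pm g(z-i))$. Each shift $g(z-i)$ is a Taylor shift costing $\tilde O(d)$ by \cref{sec:known}\eqref{comp:taylor_shift}, and $s$ of them cost $\tilde O(sd)$. The product tree then gives $P$ in $\tilde O(sd)=\tilde O(\sqrt{pd})$ operations. This yields the blocks $j$ whose indicator value is zero, and there are at most $d$ of them.

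For each such block we compute $\gcd\bigl(g(x),\prod_i(x-js-i)\bigr)$. Better, we evaluate $g$ at all $s$ points of each flagged block, using one multipoint evaluation over $ds$ points in total at cost $\tilde O(d+ds)=\tilde O(\sqrt{pd})$, and read off the roots. Summing all the steps gives $\tilde O(\sqrt{pn}+n)$ field operations. Each field operation costs $\tilde O(\log p)$, and the $\log p$ factors are absorbed by the $\tilde O$ convention. The delicate points to check are the sign and leading-coefficient bookkeeping in the identity $P(z)=\pm\prod_i g(z-i)$, and that the choice of $s$ balances the $ds$ and $p/s$ terms.
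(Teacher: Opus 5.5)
Once the resultant detour collapses to $P(z)=\pm\prod_{i=0}^{s-1}g(z-i)$, your algorithm is essentially the paper's proof: build a product of $s$ Taylor-shifted copies with a product tree, evaluate it by multipoint evaluation on the progression $0,s,2s,\dots$, and then evaluate directly on the flagged blocks. The one addition, first passing to $g=\gcd(f,x^p-x)$, is harmless but unnecessary here; the paper performs that reduction only later, in its lemma on roots with multiplicities. Two small fixes are needed. First, $P(js)=0$ detects a root in $\{js-s+1,\dots,js\}$, not in $\{js,\dots,js+s-1\}$, so either use $\prod_i g(z+i)$ as the paper does or brute-force the former block. Second, the last block wraps around modulo $p$ and can overlap the first, so your representation $\alpha=js+i$ is not unique and up to $d+1$ rather than $d$ blocks can be flagged, which does not change the cost.
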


\begin{proof}
Throughout this proof, the logarithmic factors coming from arithmetic in
$\mathbb F_p$ are absorbed into the $\tilde{O}$ notation.

If $n \geq p$, then we simply evaluate $f$ at all
points of $\mathbb{F}_p$ by \cref{sec:known}\eqref{comp:multipoint_evaluation}. This takes
$\tilde{O}(n+p)=\tilde{O}(n)$ time. Hence assume from now on that $n<p$.

Set
$
    k=\left\lceil \sqrt{\frac{p}{n}} \right\rceil.
$
For $0 \leq j < k$, compute the shifted polynomials $f(x+j)$
using \cref{sec:known}\eqref{comp:taylor_shift}. This takes total time $\tilde{O}(nk)$. Now form
\[
    g(x) := \prod_{j=0}^{k-1} f(x+j)
\]
by a product tree. Since $\deg g \leq nk$, this also takes
$\tilde{O}(nk)$ time.

Next, evaluate $g$ at the arithmetic progression $0, k, 2k, \ldots, \left\lfloor \frac{p-1}{k} \right\rfloor k$
using \cref{sec:known}\eqref{comp:multipoint_evaluation}. The number of evaluation points is
$O(p/k)$, so this step costs
\[
    \tilde{O}\left(nk+\frac{p}{k}\right).
\]
The idea is that each root of $g$ in $0,k,2k,\ldots$ tells us about a segment that contains a root of $f$.

Specifically, for each integer $a$ with $0 \leq a \leq \lfloor (p-1)/k \rfloor$, observe
that
\[
    g(ak)=\prod_{j=0}^{k-1} f(ak+j).
\]
Thus $g(ak)=0$ if and only if the interval $[ak,ak+k-1]$ contains a root of $f$. In the last interval we interpret the points modulo $p$. Mark each interval that contains a root of $f$. Since $f$ has at most $n$ roots in $\mathbb{F}_p$, and all intervals other than the last one are disjoint,
at most $n+1$ of the intervals can be marked in this way. Therefore the
union of all marked intervals contains at most $(n+1)k$ candidate points.

Finally, evaluate $f$ on all candidate points using \cref{sec:known}\eqref{comp:multipoint_evaluation}, and output those candidates at which the value is
zero. This last step takes
\[
    \tilde{O}(nk).
\]

Combining the costs, the total running time is
$
    \tilde{O}\left(nk+\frac{p}{k}\right).
$
By the choice of $k=\lceil \sqrt{p/n}\rceil$, the total running time is
\[
    \tilde{O}(\sqrt{pn}+n),
\]
as claimed.
\end{proof}

Now we describe how to find the multiplicities of the roots of $f$.

\begin{lemma}\label{lem:find_multiplicities}
    Let $p$ be a prime number, and let $f \in \mathbb{F}_p[x]$ be a nonzero polynomial of degree $n$. Suppose that distinct roots $x_1,\ldots,x_k$ of $f$ are given. Then we may find their multiplicities $m_1,\dots,m_k$ in time
    \[
        \tilde{O}(n\log(p)).
    \]
\end{lemma}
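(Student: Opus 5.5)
We find all multiplicities simultaneously with one divide-and-conquer computation on $f$ and the linear polynomials $x-x_i$, rather than treating each root separately. First we would observe that repeated differentiation is not enough in characteristic $p$. The obstruction is that the $p$-th derivative kills everything, so a multiplicity $\ge p$ cannot be read off from vanishing derivatives. We therefore use Taylor shifts and look at the lowest nonzero coefficient. For a root $x_i$, the multiplicity $m_i$ is the index of the lowest nonzero coefficient of $f(x+x_i)$, equivalently the largest $m$ with $(x-x_i)^m \mid f$. Computing each Taylor shift separately via \cref{sec:known}\eqref{comp:taylor_shift} costs $\tilde O(nk)$, which is too much. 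We need total cost $\tilde O(n)$ field operations, each costing $\tilde O(\log p)$ bit operations.

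The plan is to use the fact that $\sum_i m_i \le n$ and to compute the values $m_i$ greedily by repeated division, organised so that the total work stays $\tilde O(n)$. Set $g_0 = \prod_i (x-x_i)$, computed by a product tree (\cref{sec:known}\eqref{comp:product_tree}) in $\tilde O(k)$ operations, with $k\le n$. We then iterate. Put $h_0=f$ and $S_0=\{1,\dots,k\}$. At step $j$ we have $h_j = f/\prod_{i}(x-x_i)^{\min(m_i,j)}$ and the active set $S_j=\{i : m_i> j\}$. We evaluate $h_j$ at the points $x_i$, $i\in S_j$, by multipoint evaluation (\cref{sec:known}\eqref{comp:multipoint_evaluation}). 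Those $i$ with $h_j(x_i)\neq 0$ have $m_i=j+1$, wait, more precisely they leave the active set with $m_i=j+1$; we then set $h_{j+1}=h_j/\prod_{i\in S_{j+1}}(x-x_i)$ by exact division (\cref{sec:known}\eqref{comp:polynomial_arithmetic}).

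The naive cost of this loop is $\sum_j \tilde O(\deg h_j + |S_j|)$. This can be $\Theta(n\cdot \max m_i)$, for example when one root has multiplicity about $n/2$. This is the main obstacle. To overcome it we would use doubling and exponential search instead of decrementing by $1$. For each active root we test divisibility by $(x-x_i)^{2^\ell}$ in rounds $\ell=0,1,2,\dots$ and then binary-search the exact exponent. Each test is done collectively using multipoint modulo (\cref{sec:known}\eqref{comp:multipoint_modulo}): we compute $f \bmod (x-x_i)^{e_i}$ for all active $i$ at once, which costs $\tilde O(n + \sum_i e_i)$. Since we only test exponents $e_i \le 2m_i$ (we stop doubling at the first failure), we have $\sum_i e_i \le 2n$. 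There are $O(\log n)$ doubling rounds and $O(\log n)$ binary-search rounds, so the total is $\tilde O(n)$ field operations. The powers $(x-x_i)^{e}$ themselves are computed by repeated squaring, at total cost $\tilde O(\sum e_i)$.

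Multiplying by the $\tilde O(\log p)$ cost of each arithmetic operation in $\mathbb F_p$ gives the claimed $\tilde O(n\log p)$ bound. The step that needs care is keeping the tested exponents at most about $2m_i$ in every round, so that $\sum_i e_i=O(n)$ always holds.
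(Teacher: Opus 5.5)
Your final argument is exactly the paper's proof: doubling to bracket each $m_i$, then a parallel binary search, with each round done collectively by multipoint modulo and every tested exponent kept at most $2m_i$, so that $\sum_i e_i \le 2n$. The preliminary greedy-division loop is mis-indexed, since under your invariant $h_j(x_i)=0$ for every active $i$, but you abandon it, so it does not affect the argument.
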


\begin{proof}
As the $x_j$ are distinct roots,
\begin{equation}\label{eq:mult-sum}
\sum_{j=1}^{k} m_j \le \deg f = n,
\end{equation}
which we use repeatedly. For a root $x_j$ and integer $d\ge 1$,

\begin{equation}\label{eq:root_multiplicity}
    (x-x_j)^d \mid f \iff m_j\ge d. 
\end{equation}

We use the following ability repeatedly. Given some integers $q_1,\dots,q_k$, we may check for each $j$ whether $m_j \ge q_j$ by seeing which of the values $f \mod{(x-x_i)^{q_i}}$ are $0$. These values can be computed in $\tilde{O}(n+q_1+\cdots+q_k)$ field operations using \cref{sec:known}\eqref{comp:multipoint_modulo}. This allows us to perform a parallel binary search to find $m_i$. We must however obtain approximations to the values $m_i$ before performing the parallel binary search, since we want to make sure that $q_1 + \dots +q_k = O(n)$ in each query. 

\medskip
\noindent\textbf{Step 1 (bracketing).} We compute, for each $j$, a power of two $l_j$ with $l_j \le m_j < 2 l_j$.
Initialize $S_0 = \{1,\dots,k\}$, maintaining the invariant
\[
S_i = \{\, j : m_j \ge 2^{i} \,\}.
\]
Given $S_i$, set $q_j = 2^{i+1}$ for $j \in S_i$ and $q_j = 0$ otherwise. Then we may compute the set $S_{i+1} = \{\, j \in S_i : m_j \ge 2^{i+1} \,\}$. We stop at the first $i$ with $S_{i+1}=\emptyset$,
which happens once $2^{i+1} > n$, i.e.\ after $O(\log n)$ steps. For each $j$, let $t_j$ be the last level
containing it and set $l_j := 2^{t_j}$, $r_j := 2 l_j$; then $l_j \le m_j < r_j$.

\emph{Cost.} Every $j \in S_i$ has $m_j\ge 2^i$, so by \eqref{eq:mult-sum}, $|S_i|\cdot 2^i \le n$ and hence
$q_1+\cdots+q_k= |S_i|\cdot 2^{i+1} \le 2n$. Each of the $O(\log n)$ iterations thus costs $\tilde{O}(n)$
field operations, resulting in $\tilde{O}(n\log p)$ time.

\medskip
\noindent\textbf{Step 2 (parallel binary search).} Starting from the brackets of Step 1, we maintain
$l_j \le m_j \le r_j$. Note $r_j \le 2 m_j$ initially, and since $r_j$ never increases this persists.
While some bracket has $l_j < r_j$, set $q_j = \lceil (l_j + r_j)/2 \rceil$ and update
\[
(l_j, r_j) \gets
\begin{cases}
(q_j,\, r_j), &  (m_j \ge q_j),\\[2pt]
(l_j,\, q_j - 1), & \text{otherwise.}
\end{cases}
\]
Each update keeps $m_j\in[l_j,r_j]$ and halves $r_j - l_j$, so after $O(\log n)$ rounds every bracket
collapses to $l_j = r_j = m_j$.

\emph{Cost.} In every round $q_j \le r_j \le 2 m_j$, so by \eqref{eq:mult-sum},
$q_1+\dots+q_k \le 2(m_1+\dots+m_k) \le 2n$. Each of the $O(\log n)$ rounds thus costs
$\tilde{O}(n)$ field operations.

\medskip
In total the algorithm performs $\tilde{O}(n)$ operations in $\mathbb{F}_p$. Each costs $\tilde{O}(\log p)$
time under fast multiplication, giving overall running time $\tilde{O}(n \log p)$.
\end{proof}

We now combine the previous two lemmas to find all roots of $f$ and their
multiplicities. To improve the running time, we first compute
$h:=\gcd(f,x^p-x)$ and find the roots of $h$ rather than those of $f$.
The case in which $f$ has only one root is handled separately, using the fact that $\deg h=1$ in this case.

\begin{lemma}\label{lem:find_roots_and_multiplicities}
    Let $p$ be a prime, and let $f \in \mathbb{F}_p[x]$ be a nonzero polynomial of degree $n$.
    Let $d$ be the number of distinct roots of $f$ in $\mathbb{F}_p$. Then those roots
    $x_1,\dots,x_d$ and their multiplicities $m_1,\dots,m_d$ can be computed in
    \[
        \tilde{O}\!\big(\sqrt{dp}+n\log^2 p\big)
    \]
    time. Furthermore, if $d=1$, this can be done in $\tilde{O}(n\log^2p)$ time.
\end{lemma}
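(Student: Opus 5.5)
The plan is to compute $h:=\gcd(f,x^p-x)$, find the roots of $h$ using \cref{comp:roots_mod_p}, and then obtain the multiplicities from $f$ itself via \cref{lem:find_multiplicities}.

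\textbf{Step 1: computing $h$.} We compute $x^p \bmod f$ by repeated squaring modulo $f$. This takes $O(\log p)$ multiplications and reductions of polynomials of degree $<n$. Each costs $\tilde{O}(n)$ field operations by \cref{sec:known}\eqref{comp:polynomial_arithmetic}, which is $\tilde{O}(n\log p)$ bit operations. The total is therefore $\tilde{O}(n\log^2 p)$. We then compute
\[
h=\gcd\bigl(f,\,(x^p \bmod f)-x\bigr)
\]
with the fast Euclidean algorithm, \cref{sec:known}\eqref{comp:gcd}, in $\tilde{O}(n\log p)$ time. Since $x^p-x=\prod_{a\in\mathbb F_p}(x-a)$, the polynomial $h$ is exactly $\prod_{j=1}^d (x-x_j)$. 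In particular, $h$ is square-free, has $\deg h=d$, and has the same roots as $f$.

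\textbf{Step 2: roots and multiplicities.} If $d=1$, then $h=x-x_1$ and the root is read off directly. Otherwise we apply \cref{comp:roots_mod_p} to $h$, which costs $\tilde{O}(\sqrt{dp}+d)$ time. The $\log p$ factors from field arithmetic are absorbed in the $\tilde{O}$, exactly as in that lemma. Finally we apply \cref{lem:find_multiplicities} to $f$ with the roots $x_1,\dots,x_d$, which costs $\tilde{O}(n\log p)$. Since $d\le n$, the total is $\tilde{O}(\sqrt{dp}+n\log^2 p)$. When $d=1$, the call to \cref{comp:roots_mod_p} is skipped, so the total is $\tilde{O}(n\log^2 p)$.

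\textbf{Main obstacle.} The only point needing care is the bookkeeping in Step 1. We must reduce $x^p$ modulo $f$ during the exponentiation rather than forming $x^p$, because forming it would cost $\Omega(p)$. We also need to check that the squaring chain really contributes $n\log^2p$: there are $\log p$ steps, each of $\tilde{O}(n)$ field operations, and each field operation costs $\tilde{O}(\log p)$ bits. One further detail: $f$ may not be monic. Before the reductions we divide $f$ by its leading coefficient, which is invertible in $\mathbb F_p$; this changes neither the roots nor their multiplicities. With that normalization, division with remainder by $f$ is valid.
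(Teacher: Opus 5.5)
Your proposal is correct and follows the paper's proof essentially step for step. You compute $h=\gcd\bigl(f,(x^p \bmod f)-x\bigr)$ by repeated squaring in $\tilde{O}(n\log^2 p)$ time, find the $d$ roots of $h$ with the baby-step giant-step lemma (skipped when $d=1$ because $h$ is linear), and recover the multiplicities from $f$ with the parallel binary search lemma. Your extra monic normalization is harmless but unnecessary: over $\mathbb{F}_p$, any nonzero leading coefficient is already invertible.
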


\begin{proof}
Recall that each arithmetic operation in $\mathbb{F}_p$ costs
$\tilde{O}(\log p)$ time.

Since $x^p - x = \prod_{a \in \mathbb{F}_p}(x-a)$, the polynomial
\[
h := \gcd\!\big(f,\; x^p - x\big)
\]
satisfies $h = \prod_{i=1}^{d}(x - x_i)$. In particular $\deg h = d$. We compute $x^p \bmod f$ by repeated squaring
($O(\log p)$ multiplications in $\mathbb{F}_p[x]/(f)$, each $\tilde{O}(n)$ field operations),
and then $h = \gcd\!\big(f,\,(x^p \bmod f) - x\big)$ via \cref{comp:gcd} in $\tilde{O}(n)$
field operations. This costs $\tilde{O}(n\log p)$ field
operations, i.e.\ $\tilde{O}(n \log^2 p)$ time.

If $d=1$, $h$ is linear, so the (only) root of $f$ is minus the constant coefficient of $h$, and is thus found in $O(1)$. Otherwise, we find the roots of $h$. $\deg h = d$, so by
\cref{comp:roots_mod_p} we recover $x_1,\dots,x_d$ in $\tilde{O}(d+\sqrt{dp})$ time.

Next, we compute their multiplicities in $f$.
Applying \cref{lem:find_multiplicities} to $f$ and the roots $x_1,\dots,x_d$ yields
$m_1,\dots,m_d$ in $\tilde{O}(n\log p)$ time.

Summing the running times of the steps, the algorithm runs in $\tilde{O}(n\log^2p+\sqrt{dp})$ time for $d \neq 1$, and $\tilde{O}(n\log^2 p)$ for $d=1$.

\end{proof}

\section{Finding a good prime $p$}\label{sec:finding-p}

We first prove that a prime number $p$ satisfying all three requirements of \cref{thm:finding_p} exists; we then give an algorithm for finding it.

The most challenging property that $p$ must satisfy is having $\deg(\gcd(f_p,f_p')) < t$. For that purpose, we introduce the resultant, and prove that $\deg (\gcd(f_p,f_p'))\le v_p(\operatorname{res}(f,f'))$. Thus, by bounding $|\operatorname{res}(f,f')|$, we bound the number of "bad" primes satisfying $\deg(\gcd(f_p,f_p')) \ge t$. Removing those bad primes, and the primes that divide the leading coefficient of $f$ from the list of primes smaller than $\tilde{O}(\frac{nb}{t})$ gives a prime satisfying all three requirements. 

We begin by defining the resultant and proving the desired results regarding it.
\subsection{The Resultant}
Let \(R\) be an integral domain, and let
\[
A(x)=a_nx^n+a_{n-1}x^{n-1}+\cdots+a_1x+a_0
\]
and
\[
B(x)=b_mx^m+b_{m-1}x^{m-1}+\cdots+b_1x+b_0
\]
be polynomials in \(R[x]\) of degrees \(n\) and \(m\), respectively.

The resultant of \(A\) and \(B\), denoted \(\operatorname{res}(A,B)\), is the
determinant of their Sylvester matrix:
\[
\operatorname{res}(A,B)
=
\det
\begin{pmatrix}
a_n & a_{n-1} & \cdots & a_0 & 0 & \cdots & 0 \\
0 & a_n & a_{n-1} & \cdots & a_0 & \cdots & 0 \\
\vdots & & \ddots & & & \ddots & \vdots \\
0 & \cdots & 0 & a_n & a_{n-1} & \cdots & a_0 \\
b_m & b_{m-1} & \cdots & b_0 & 0 & \cdots & 0 \\
0 & b_m & b_{m-1} & \cdots & b_0 & \cdots & 0 \\
\vdots & & \ddots & & & \ddots & \vdots \\
0 & \cdots & 0 & b_m & b_{m-1} & \cdots & b_0
\end{pmatrix},
\]

Equivalently, let \(R[x]_{<d}\) denote the \(R\)-module of polynomials of
degree less than \(d\). The above Sylvester matrix represents the
\(R\)-linear map
\[
T : R[x]_{<m} \oplus R[x]_{<n} \longrightarrow R[x]_{<n+m}
\]
defined by
\[
T(P,Q)=AP+BQ.
\]
With respect to the standard coefficient bases, this matrix is the
Sylvester matrix. Hence
\[
\operatorname{res}(A,B)=\det(T).
\]

An important property of the resultant is that $\operatorname{res}(A,B) = 0$ if and only if $A$ and $B$ have a common non-constant divisor over the fraction field of $R$.

We now prove two helper lemmas from which \cref{lem:res_modular_bound} will follow.

\begin{lemma}\label{lem:res_matrix_rank_bound}
    Let $p$ be a prime number and let $A,B \in {\mathbb{F}_p[x]}$ be polynomials with $\deg A \le n$, $\deg B \le m$. Denote $g = \gcd(A,B)$ and $d = \deg(g)$. Then the rank of the Sylvester matrix of $A,B$ is bounded by $n+m-d$.
\end{lemma}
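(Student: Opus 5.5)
We will bound the rank through the image of the linear map $T(P,Q)=AP+BQ$, where $P$ ranges over $\mathbb{F}_p[x]_{<m}$ and $Q$ over $\mathbb{F}_p[x]_{<n}$. As explained above, the Sylvester matrix is the matrix of $T$ with respect to the standard coefficient bases. Hence its rank equals $\dim_{\mathbb{F}_p} \operatorname{im}(T)$, and it suffices to show $\dim \operatorname{im}(T) \le n+m-d$.

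The key observation is that every element of the image is divisible by $g$. Indeed, $g \mid A$ and $g \mid B$, so $g \mid AP+BQ$. Moreover $\deg(AP+BQ) < n+m$, since $\deg(AP) \le n+m-1$ and $\deg(BQ) \le m+n-1$. Therefore
\[
\operatorname{im}(T) \subseteq \{\, gH : H \in \mathbb{F}_p[x],\ \deg(gH) < n+m \,\} = g\cdot \mathbb{F}_p[x]_{<n+m-d}.
\]
Multiplication by the nonzero polynomial $g$ is injective on $\mathbb{F}_p[x]$. Hence the right-hand side has dimension exactly $n+m-d$, and the rank bound follows.

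The one point to handle carefully is degenerate input. If $A=B=0$, then $g=0$ and $d$ is not a meaningful degree (or can be interpreted as $-\infty$); in that case the matrix is zero and the bound is trivial. The case where exactly one of $A,B$ is zero is already covered by the argument, since then $g$ is the other polynomial, up to a unit. If $\deg A<n$, the matrix is still defined with the padded coefficients, so only $\deg A\le n$ and $\deg B\le m$ are used. No step here looks like a serious obstacle. The only subtlety is that the Sylvester matrix of the reductions mod $p$ is built with the formal sizes $n,m$, not the actual degrees, which the argument above accommodates.
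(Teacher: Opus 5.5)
Your proposal is correct and follows the paper's proof: both bound the rank by noting that the image of $T(P,Q)=AP+BQ$ lies in the space of multiples of $g$ of degree less than $n+m$, which has dimension $n+m-d$. Your extra care about the injectivity of multiplication by $g$ and about formal versus actual degrees fills in details the paper leaves implicit.
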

\begin{proof}
     The Sylvester matrix of $A,B$ represents the linear transformation $T(P,Q) = AP+BQ$. The image of $T$ is contained in the vector space consisting of polynomials of degree less than $n+m$ which are divisible by $g$, which is of dimension $n+m-d$.
\end{proof}

\begin{lemma}\label{lem:v_p_bound_from_rank}
    Let p be a prime number. Let $M \in M_n(\mathbb{Z})$ be a matrix such that when reduced modulo $p$, the rank of $M_p$ in $\mathbb{F}_p$ is bounded by $n-d$ for some $d$. Then $p^d \mid \operatorname{\det(M)}$.
\end{lemma}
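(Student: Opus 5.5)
Our plan is to use the fact that $\mathbb{F}_p$ is a field and $\mathbb{Z}_{(p)}$, the integers localized at $p$, is a discrete valuation ring. We do the row reduction with integer operations whose transformation matrices are unimodular over $\mathbb{Z}$. The determinant then changes only by sign, and it can be read off a matrix in which $d$ rows are divisible by $p$.

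First, since $M_p$ has rank $r\le n-d$, its rows are linearly dependent over $\mathbb{F}_p$. Concretely, the left kernel of $M_p$ has dimension $n-r\ge d$. Choose an $\mathbb{F}_p$-basis of that kernel and put it in reduced row echelon form. This gives $d$ kernel vectors $u_1,\dots,u_d\in\mathbb{F}_p^n$ together with distinct pivot positions $i_1,\dots,i_d$. Each $u_k$ has entry $1$ at $i_k$ and entry $0$ at every other pivot $i_j$.

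Second, lift each $u_k$ to an integer vector $\tilde u_k$ with entries in $\{0,\dots,p-1\}$. Let $U\in M_n(\mathbb{Z})$ be the identity matrix with row $i_k$ replaced by $\tilde u_k$ for each $k$. Then $U$ is the identity off the pivot rows. Restricted to the pivot rows and columns, it is the identity too, because $\tilde u_k$ has a $1$ at $i_k$ and $0$ at the other pivots. Expanding $\det U$ along the non-pivot rows therefore gives $\det U=1$. Each pivot row of $UM$ equals $\tilde u_k M$, which reduces modulo $p$ to $u_k M_p=0$. Hence $d$ rows of $UM$ are divisible by $p$. Pulling a factor $p$ out of each of these rows shows $p^d\mid\det(UM)=\det U\cdot\det M=\det M$.

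The only delicate point is making the transformation have determinant exactly $\pm1$, or at least a unit mod $p$, instead of some integer that might itself absorb powers of $p$. The reduced-echelon choice of the kernel basis handles this. Alternatively, one could argue via Smith normal form over $\mathbb{Z}_{(p)}$: $M=PDQ$ with $P,Q$ invertible and $D$ diagonal. Since rank is preserved mod $p$, at most $n-d$ diagonal entries of $D$ are $p$-adic units, so at least $d$ are divisible by $p$, and $v_p(\det M)=v_p(\det D)\ge d$. Either route proves the claim, including the case $\det M=0$.
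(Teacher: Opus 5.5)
Your proof is correct and is essentially the paper's argument, transposed from columns to rows: both multiply $M$ by an integer matrix that is invertible modulo $p$ so that $d$ rows (columns, in the paper) of the product vanish mod $p$, and then pull out $p^d$. The only difference is that the paper lifts $X_p$ arbitrarily and uses $p \nmid \det X$, whereas your reduced-echelon construction gives $\det U = 1$ exactly; this is a harmless refinement, and your Smith normal form alternative over $\mathbb{Z}_{(p)}$ is also valid.
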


\begin{proof}
    We perform a Gaussian elimination on $M_p$ over $\mathbb{F}_p$ to obtain an invertible matrix $X_p \in GL_n(\mathbb{F}_p)$ such that $M_pX_p$ contains $d$ zero columns. Now, choose some matrix $X \in M_n(\mathbb{Z})$ whose reduction modulo $p$ is $X_p$. Note that the matrix $MX$ has $d$ columns which vanish modulo $p$. Thus, by expansion of permutations, $p^d \mid \det(MX)$. Since $\det(M) = \frac{\det(MX)}{\det(X)}$ and $p \nmid \det(X)$, we have $p^d \mid \det(M)$. 
\end{proof}

\begin{lemma}\label{lem:res_modular_bound}
    Let $p$ be a prime number and let $A,B \in \mathbb{Z}[x]$. Denote $g = \gcd(A_p,B_p)$ and $d = \deg(g)$. Then $p^d \mid \operatorname{res}(A,B)$.
\end{lemma}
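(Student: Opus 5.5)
The plan is to combine \cref{lem:res_matrix_rank_bound} and \cref{lem:v_p_bound_from_rank}. Let $n=\deg A$ and $m=\deg B$, and let $S\in M_{n+m}(\mathbb{Z})$ be the Sylvester matrix of $A,B$, so that $\operatorname{res}(A,B)=\det S$. Reducing entrywise modulo $p$ gives $S_p$. Whenever the leading coefficients survive reduction, $S_p$ is exactly the Sylvester matrix of $A_p,B_p$ built with respect to the formal degrees $n,m$.

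Next, observe that \cref{lem:res_matrix_rank_bound} is stated for polynomials with $\deg A_p\le n$ and $\deg B_p\le m$. It therefore applies even if $p$ divides a leading coefficient. In that case $S_p$ still represents the map $T(P,Q)=A_pP+B_pQ$ from $\mathbb{F}_p[x]_{<m}\oplus\mathbb{F}_p[x]_{<n}$ to $\mathbb{F}_p[x]_{<n+m}$, because the reduced matrix has the same shape with the reduced coefficients (possibly zero leading entries). So $\operatorname{rank} S_p\le n+m-d$, where $d=\deg\gcd(A_p,B_p)$. Applying \cref{lem:v_p_bound_from_rank} to the $(n+m)\times(n+m)$ matrix $S$ then gives $p^d\mid\det S=\operatorname{res}(A,B)$.

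The only point needing care is the degenerate case. If $A_p$ and $B_p$ are both zero, the gcd is $0$ and its degree is ill-defined, so the statement should be read with $A_p,B_p$ not both zero. If exactly one of them, say $B_p$, is zero, then $g=A_p$. The rank argument still works, since the image of $T$ is $A_p\cdot\mathbb{F}_p[x]_{<m}$, whose dimension is at most $m\le n+m-d$. The argument is otherwise routine. I expect the main (mild) obstacle to be checking that reduction mod $p$ commutes with forming the Sylvester matrix relative to the fixed formal degrees. That is immediate, since the matrix entries are just the coefficients.
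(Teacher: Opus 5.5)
Your proof is correct and is the paper's argument: reduce the Sylvester matrix modulo $p$, bound its rank by $n+m-d$ using \cref{lem:res_matrix_rank_bound}, and conclude $p^d \mid \det S$ with \cref{lem:v_p_bound_from_rank}. Your extra care about degenerate cases ($p$ dividing a leading coefficient, or exactly one of $A_p,B_p$ vanishing) is sound. The paper's lemma already covers these cases because it is stated under the hypotheses $\deg A\le n$, $\deg B\le m$.
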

\begin{proof}
    Follows immediately by applying \cref{lem:res_matrix_rank_bound} to $A,B$ and \cref{lem:v_p_bound_from_rank} to the Sylvester matrix.
\end{proof}

\begin{lemma}\label{lem:res_size_bound}
    For $A,B \in \Z[x]$ with $\deg A = n$, $\deg B = m$, 
    \[|\operatorname{res}(A,B)| \le \max(\|A\|_\infty,\|B\|_\infty)^{n+m}(n+m)!\]
\end{lemma}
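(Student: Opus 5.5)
The plan is to bound the determinant directly through the Leibniz (permutation) expansion. The Sylvester matrix of $A$ and $B$ is an $(n+m)\times(n+m)$ integer matrix. Every entry is either $0$ or a coefficient of $A$ or of $B$. Hence every entry has absolute value at most $H:=\max(\|A\|_\infty,\|B\|_\infty)$.

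Write
\[
\operatorname{res}(A,B)=\sum_{\sigma\in S_{n+m}}\operatorname{sgn}(\sigma)\prod_{i=1}^{n+m} M_{i,\sigma(i)},
\]
where $M$ is the Sylvester matrix. Each product has $n+m$ factors, each of absolute value at most $H$, so each term is at most $H^{n+m}$ in absolute value. There are $(n+m)!$ permutations. The triangle inequality therefore gives $|\operatorname{res}(A,B)|\le (n+m)!\,H^{n+m}$, which is exactly the claimed bound.

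There is no real obstacle here. The only point to check is that the size of the Sylvester matrix is indeed $n+m$. This holds because there are $m$ rows of shifted coefficients of $A$ and $n$ rows of shifted coefficients of $B$, which matches the domain $R[x]_{<m}\oplus R[x]_{<n}$ of the map $T$ described above.

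If a sharper bound were needed, one could use Hadamard's inequality instead. That would give $|\operatorname{res}(A,B)|\le (\sqrt{n+1}\,\|A\|_\infty)^m(\sqrt{m+1}\,\|B\|_\infty)^n$. For the application in \cref{thm:finding_p}, however, the crude bound suffices. It already yields $\log_2|\operatorname{res}(f,f')|=\tilde{O}(nb)$, since $\|f'\|_\infty\le n\|f\|_\infty$ and $\log((2n)!)=O(n\log n)$.
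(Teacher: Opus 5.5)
Your proposal is correct and is essentially the paper's own proof. Both bound each entry of the $(n+m)\times(n+m)$ Sylvester matrix by $\max(\|A\|_\infty,\|B\|_\infty)$ and then apply the permutation expansion, which gives $(n+m)!$ terms each of absolute value at most that maximum raised to the power $n+m$.
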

\begin{proof}
    Note that the Sylvester matrix $M$ is an $(n+m)\times(n+m)$ matrix with entries that are bounded (in absolute value) by $\max(\|A\|_\infty,\|B\|_\infty)$. Thus, every summand in the expansion by permutations of $\det(M)$ is bounded (in absolute value) by $\max(\|A\|_\infty,\|B\|_\infty)^{n+m}$. Therefore, $|\operatorname{res}(A,B)| = |\det(M)| \le \max(\|A\|_\infty,\|B\|_\infty)^{n+m}(n+m)!$
\end{proof}

\subsection{Proof of existence}

We bound the number of primes that violate requirement \ref{req:multiplicities_are_small}.

\begin{lemma}\label{lem:not_lots_of_bad_primes}
    Let $n,b$ be positive integers and $f \in \mathbb{Z}[x]$ be a square-free polynomial of degree $n$ with $||f||_\infty < 2^b$. Then There are at most $r := \lceil\frac{6nb+6n\log(n)}{t}\rceil$ primes not dividing the leading coefficient of $f$ that satisfy $\deg(\gcd(f_p,f_p'))\ge t$.
\end{lemma}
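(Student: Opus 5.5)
The plan is to combine \cref{lem:res_modular_bound} with \cref{lem:res_size_bound}, applied to $A=f$ and $B=f'$. Since $f$ is square-free, $f$ and $f'$ have no common non-constant factor over $\mathbb{Q}$, so $R:=\operatorname{res}(f,f')\neq 0$.

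\textbf{Step 1: each bad prime contributes at least $t$ to $v_p(R)$.} Let $p$ be a prime not dividing the leading coefficient of $f$ with $\deg(\gcd(f_p,f_p'))\ge t$. The Sylvester matrix used for $\operatorname{res}(f,f')$ is built with the formal degrees $n$ and $n-1$. Its reduction modulo $p$ is exactly the Sylvester-type matrix of $f_p,f'_p$ with respect to these same formal degree bounds. \cref{lem:res_matrix_rank_bound} is stated for $\deg A\le n$, $\deg B\le m$, and its proof (image consists of multiples of $g$ of degree $<n+m$) works verbatim with formal degree bounds. Hence \cref{lem:res_modular_bound} gives $p^{d}\mid R$ with $d=\deg\gcd(f_p,f_p')\ge t$.

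The one point to check is that $f_p'$ equals $(f')_p$. This holds because reduction commutes with formal differentiation, even if $\deg f_p'$ drops. The hypothesis $p\nmid a_n$ keeps $\deg f_p=n$, but the argument does not really need it.

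\textbf{Step 2: count.} Let $P$ be the set of bad primes. Then
\[
\prod_{p\in P}p^{t}\ \Big|\ R,
\qquad\text{so}\qquad
t\,|P|\le \sum_{p\in P}t\log_2 p\le \log_2|R|,
\]
using $\log_2 p\ge 1$.

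\textbf{Step 3: bound $\log_2|R|$.} The coefficients of $f'$ satisfy $\|f'\|_\infty\le n\|f\|_\infty<n2^b$. Applying \cref{lem:res_size_bound} with degrees $n$ and $n-1$ gives
\[
|R|\le (n2^b)^{2n-1}(2n-1)!.
\]
Taking logarithms,
\[
\log_2|R|\le (2n-1)(b+\log_2 n)+(2n-1)\log_2(2n)\le 2nb+4n\log_2 n+2n.
\]
This is at most $6nb+6n\log n$, since $2n\le 4nb$ for $b\ge1$. The logarithm base does not matter given the slack, though if $\log$ means natural log it is worth a quick check.

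Therefore $|P|\le \log_2|R|/t\le (6nb+6n\log n)/t\le r$. The only delicate step is the formal-degree issue in Step 1, which I handle by noting that the rank argument uses only the degree bounds.
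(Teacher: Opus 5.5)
Your proof is correct and follows the paper's argument. In both, $\operatorname{res}(f,f')\neq 0$ by square-freeness, each bad prime contributes $p^t$ by \cref{lem:res_modular_bound}, and \cref{lem:res_size_bound} with $\|f'\|_\infty\le n2^b$ bounds $\log_2|\operatorname{res}(f,f')|\le 2nb+4n\log n+2n\le 6nb+6n\log n$. Your concern about formal degrees is handled, as you say, by the $\deg A\le n$, $\deg B\le m$ formulation of \cref{lem:res_matrix_rank_bound}.
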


\begin{proof}
    Suppose that there are $a$ primes that satisfy this condition. By \cref{lem:res_modular_bound}, every such $p$ has $p^{t} \mid \operatorname{res}(f,f')$. Let $M$ be the product of all those primes. Then $M^{t} \mid \operatorname{res}(f,f')$. Since $f$ is square-free, we have $\operatorname{res}(f,f')\neq 0$. Thus $|\operatorname{res}(f,f')| \ge M^t \ge 2^{ta}$.

We now apply \cref{lem:res_size_bound} to $A=f, B = f'$. Note that $\|f\|_\infty < 2^b,\|f'\|_\infty  \le n2^b$, $\deg(f) = n, \deg(f') = n-1$. We get $|\operatorname{res}(f,f')| \le (n2^b)^{2n-1}(2n-1)!$. We use the fact that $n! \le n^n =2^{n\log(n)}$ to get $|\operatorname{res}(f,f')| \le 2^{(b+\log(n))(2n-1)}2^{(2n-1)\log(2n-1)} \le 2^{6(nb+n\log(n))}$.

So we have $2^{ta} \le |\operatorname{res}(f,f')| \le 2^{6nb+6n\log(n)}$. Thus, $a \le \frac{6nb+6n\log(n)}{t}$ as needed. 
\end{proof}

As the leading coefficient of $f$ is bounded by $2^b$, the number of primes violating requirement \ref{req:pndiv_a_n} is at most $b$. 

Let $u = b+r+1$. By the prime numbers theorem, the $u$-th prime number is $\tilde{O}(u)$. Among the first $u$ prime numbers there must be a prime satisfying requirements 2 and 3, thus it also satisfies requirement 1.
 
\subsection{Algorithm}

Our strategy for finding the appropriate prime is to iterate over the prime numbers in increasing order until we find one that satisfies all three requirements. To avoid reading all coefficients of $f$ each time we iterate over some prime, we reduce each coefficient of $f$ modulo each prime in advance.

\begin{algorithm}[H]
    \caption{Finding a good prime $p$}
    \label{alg:algorithm_find_p}
    \raggedright
    Input: Integers $n,b$, a parameter $1 \le t \le n$, and a square-free polynomial $f(x) = a_nx^n + \ldots +a_1x+a_0$ of degree $n$ with $\|f\|_\infty < 2^b$.  \\
    Output: a prime $p$ satisfying the three requirements of \cref{thm:finding_p}.
    \noindent\rule{\textwidth}{0.4pt}
    \begin{algorithmic}[1]
        \State Set $r = \lceil\frac{6nb+6n\log(n)}{t}\rceil$, \(u =  b+r+1\) \label{step:first-line}
        \State Compute an array $P$ consisting of the first $u$ prime numbers using \cref{comp:sieve} \label{step:sieve}
        \State Compute the residue of $a_n$ modulo every prime in $P$ using \cref{sec:known}\eqref{comp:multipoint_modulo} \label{step:leading_coeff_residues}
        \State Compute an array $Q$ consisting of the first $r+1$ primes in $P$ that do not divide $a_n$ \label{step:calc_Q}
        \For{$i = 0,1,\ldots,n$}\label{step:first loop}
            \State Compute the remainder of $a_i$ modulo every $p$ in $Q$ using \cref{sec:known}\eqref{comp:multipoint_modulo} \label{step:coeffs_residues}
        \EndFor
        \For{every $p$ in $Q$}\label{step:second_loop}
            \State Differentiate $f_p$ to get the coefficients of $f_p'$ \label{step:calc $f_p'$}
            \State Compute $g = \gcd(f_p,f_p')$ using \cref{comp:gcd} \label{step:compute gcd}
            \If{$\deg(g) < t$}
                \State Output $p$ and terminate \label{step:found_p}
            \EndIf
        \EndFor
    \end{algorithmic}
\end{algorithm}

\subsubsection{Correctness}
Every prime in $P$ is one of the first $u$ prime numbers. By the prime number theorem
the $u$-th prime is $\tilde{O}(u) = \tilde{O}(\frac{nb}{t})$, so every prime in $P$
satisfies requirement \ref{req:p_is_small}. By construction no prime in $Q$ divides $a_n$,
so every prime in $Q$ satisfies requirement \ref{req:pndiv_a_n}.

We first check that $Q$ is well defined, i.e.\ that $P$ contains at least $r+1$ primes
that do not divide $a_n$. Since $|a_n| < 2^b$, at most $b$ primes divide $a_n$. As $P$
contains $u = b+r+1$ primes, at least $r+1$ of them do not divide $a_n$, so $Q$ indeed
contains $r+1$ primes.

It remains to show that some prime in $Q$ satisfies requirement
\ref{req:multiplicities_are_small}. By \cref{lem:not_lots_of_bad_primes}, the number of primes violating
requirement \ref{req:multiplicities_are_small} is at most $r$.
Since $Q$ contains $r+1$ primes, at least one of them satisfies requirement
\ref{req:multiplicities_are_small}. The loop of line \ref{step:second_loop} returns the
first such prime at line \ref{step:found_p}, which therefore satisfies all three requirements.

\subsubsection{Time analysis}
Recall $r = \lceil\frac{6nb+6n\log(n)}{t}\rceil = \tilde{O}(\frac{nb}{t})$ and $u = b+r+1 = \tilde{O}(\frac{nb}{t})$,
the latter since $t \le n$. We show the algorithm runs in $\tilde{O}(u+nr) = \tilde{O}(\frac{n^2 b}{t})$ time.

By the prime number theorem every prime in $P$ is $\tilde{O}(u)$, so each arithmetic
operation modulo such a prime costs $\tilde{O}(\log(nb))$ time.
This factor is polylogarithmic in the input and is absorbed into the $\tilde{O}$ of the final bound.

Computing the first $u$ primes (line \ref{step:sieve}) costs $\tilde{O}(u)$ by \cref{comp:sieve}.
Reducing $a_n$ modulo the at most $u$ primes of $P$ (line \ref{step:leading_coeff_residues}) costs
$\tilde{O}(b+u) = \tilde{O}(u)$ by \cref{sec:known}\eqref{comp:multipoint_modulo}, and selecting $Q$ (line \ref{step:calc_Q})
is a linear scan over $P$, hence $\tilde{O}(u)$.

The two loops dominate. In lines \ref{step:first loop}--\ref{step:coeffs_residues}, each of the $n+1$
coefficients is reduced modulo the $r+1$ primes of $Q$, costing $\tilde{O}(b+r)$ per coefficient by
\cref{sec:known}\eqref{comp:multipoint_modulo}, for $\tilde{O}(n(b+r)) = \tilde{O}(nr)$ in total (using $r \ge \frac{nb}{t} \ge b$).
In lines \ref{step:second_loop}--\ref{step:found_p}, each of the at most $r+1$ primes requires
differentiating $f_p$ and one gcd computation (\cref{comp:gcd}), i.e.\ $\tilde{O}(n)$ field operations,
for $\tilde{O}(nr)$ in total.

Altogether the algorithm runs in $\tilde{O}(u + nr) = \tilde{O}(\frac{n^2 b}{t})$ time.

\section{Lifting roots}\label{sec:lifting}

\subsection{Background and general ideas}

The following lifting problem is classical. Let $f \in \Z[x]$ and let $p$ be a prime. Given an integer $x_0$ with $p \mid f(x_0)$ and $p \nmid f'(x_0)$, together with a positive integer $m$, find an integer $x$ such that $x \equiv x_0 \pmod p$ and $p^m \mid f(x)$. Such an $x$ always exists, is unique modulo $p^m$, and can be computed efficiently; these are exactly the assertions of Hensel's lifting lemma.

This lemma yields a classic algorithm for finding the non-negative integer roots of a polynomial; negative roots can be handled by applying the same
procedure to $f(-x)$. First, apply \cref{thm:finding_p} with $t=1$ to obtain a prime $p$ for which $f_p$ has no repeated roots. Next, solve $f(x) \equiv 0 \pmod p$ and use Hensel's lemma to lift each solution to a root modulo $p^b$. Every integer root of $f$ divides its constant term and is therefore bounded by $2^b$; since the lift is unique modulo $p^b$, every such root coincides with one of these lifts. As $f$ has at most $n$ roots modulo $p$, this produces a list of at most $n$ candidates, which we pass to \cref{thm:verifying_roots} to recover the roots of $f$. 

The difficulty is that running \cref{thm:finding_p} with $t=1$ costs $\tilde{O}(n^2 b)$ time, which exceeds our target. We instead run \cref{thm:finding_p} with $t=\sqrt{n}$ and show how to lift repeated roots efficiently. Before turning to the algorithm, we develop the mathematics underlying the lifting of repeated roots.

Lifting a repeated root is delicate because such a root may admit many lifts. Consider $f(x)=x^2$ with $p=3$, and the root $x_0=0$ of $x^2 \equiv 0 \pmod 3$. It has three lifts modulo $9$, namely $0,3,6$; more generally, the number of solutions of $x^2 \equiv 0 \pmod{3^m}$ equals $3^{\lfloor m/2 \rfloor}$, which grows exponentially in $m$. Thus, we need a different notion of lifting. To this end we introduce the lifting tree.

\subsection{The lifting tree}
Let us fix a prime number $p$ and a polynomial $f \in \Z[x]$ of degree $n$. We fix some notations.

\begin{definition}
For a non-zero polynomial $h \in \Z[x]$, we write $v(h) = v_p(h)$ and define $h^* = \frac{h}{p^{v(h)}}$.   
We denote by $h_p^*$ the image of $h^*$ in $\mathbb{F}_p[x]$.
\end{definition}

\begin{observation}
We note that all coefficients of $h^*$ are integers, and at least one of them is not divisible by $p$, which implies that  $h_p^*$ is always non-zero.    
\end{observation}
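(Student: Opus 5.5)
The observation has two parts, and I will prove them in order directly from the definitions.

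\emph{Integrality of $h^*$.} Write $h=\sum_i c_i x^i$ with $c_i\in\Z$. Since $h\neq 0$, some coefficient is non-zero, so $v(h)=\min_{c_i\neq 0} v_p(c_i)$ is a well-defined non-negative integer. Zero coefficients impose no constraint on the minimum and stay zero after division. For every non-zero $c_i$ we have $v_p(c_i)\ge v(h)$, so $p^{v(h)}\mid c_i$. Hence each coefficient $c_i/p^{v(h)}$ of $h^*$ is an integer.

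\emph{A unit coefficient.} Choose an index $j$ with $c_j\neq 0$ and $v_p(c_j)=v(h)$; it exists because the minimum is attained. Then
\[
v_p\bigl(c_j/p^{v(h)}\bigr)=v_p(c_j)-v(h)=0,
\]
so $p\nmid c_j/p^{v(h)}$.

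\emph{Non-vanishing mod $p$.} The coefficient of $x^j$ in $h^*_p$ is the residue of $c_j/p^{v(h)}$ modulo $p$. By the previous step this residue is non-zero in $\mathbb{F}_p$, so $h^*_p\neq 0$.

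The only point needing care is the convention that $v(h)$ is a minimum over the non-zero coefficients, or equivalently that $v_p(0)=+\infty$. With this convention the minimum is finite exactly because $h\neq 0$.
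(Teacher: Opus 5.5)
Your proof is correct and is the same direct verification the paper leaves implicit. Dividing by $p^{v(h)}$ makes every coefficient integral, and a coefficient attaining the minimal valuation becomes prime to $p$, so its residue in $\mathbb{F}_p$ is non-zero. Your remark that the minimum must be taken over the non-zero coefficients (equivalently, $v_p(0)=+\infty$) is a useful tidy-up, since the paper defines $v_p$ only for non-zero integers and then takes the minimum over all coefficients.
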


\begin{definition}
For a polynomial $f \in \Z[x]$ and an integer $k \ge 0$, the
\emph{$k$-th Hasse derivative} of $f$ is
\[
    f^{[k]} := \frac{1}{k!}\, f^{(k)}.
\]
Equivalently,
\[
    f(x + y) = \sum_{k \ge 0} f^{[k]}(x)\, y^k.
\]
\end{definition}

\begin{observation}\label{obs:hasse}
Writing $f = \sum_j a_j x^j$, we have
$f^{[k]} = \sum_j \binom{j}{k} a_j\, x^{j-k}$, so every coefficient of $f^{[k]}$ lies in $\Z$ and is also divisible by $p^{v(f)}$; in particular $v_p\big(f^{[k]}(x)\big) \ge v(f)$ for every
integer $x$.
\end{observation}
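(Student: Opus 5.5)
The claim has two parts. First, every coefficient of $f^{[k]}$ is an integer divisible by $p^{v(f)}$. Second, $v_p(f^{[k]}(x)) \ge v(f)$ for every integer $x$. I would prove both directly from the coefficient formula.

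\textbf{Step 1: the coefficient formula.} Write $f=\sum_j a_jx^j$. Differentiate termwise $k$ times:
\[
f^{(k)} = \sum_{j\ge k} j(j-1)\cdots(j-k+1)\,a_j x^{j-k}.
\]
Dividing by $k!$ turns $j(j-1)\cdots(j-k+1)/k!$ into $\binom{j}{k}$, which gives the stated formula $f^{[k]}=\sum_j \binom{j}{k}a_jx^{j-k}$. Here the division by $k!$ is carried out in $\mathbb{Q}[x]$. As a consistency check with the definition, expand $f(x+y)=\sum_j a_j(x+y)^j$ by the binomial theorem and collect the coefficient of $y^k$. This yields the same sum, so the two descriptions of $f^{[k]}$ agree.

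\textbf{Step 2: integrality and divisibility of the coefficients.} Since $\binom{j}{k}\in\mathbb{Z}$, each coefficient $\binom{j}{k}a_j$ is an integer. By the definition of $v(f)=\min_i v_p(a_i)$, we have $p^{v(f)}\mid a_j$ for every $j$. Hence $p^{v(f)}\mid \binom{j}{k}a_j$ for every $j$.

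\textbf{Step 3: evaluation at an integer.} Let $x\in\mathbb{Z}$. Then $f^{[k]}(x)=\sum_j \binom{j}{k}a_jx^{j-k}$ is an integer combination of integers each divisible by $p^{v(f)}$. So $p^{v(f)}\mid f^{[k]}(x)$, which is exactly $v_p(f^{[k]}(x))\ge v(f)$.

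\textbf{Edge case.} If $f^{[k]}(x)=0$, then $v_p(f^{[k]}(x))=\infty$ by convention, so the inequality holds trivially.

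There is no real obstacle here. The only point needing care is that the identity $\frac{1}{k!}\frac{j!}{(j-k)!}=\binom{j}{k}$ removes the division by $k!$, so integrality holds even when $p\le k$.
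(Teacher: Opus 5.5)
Your proof is correct and follows the reasoning the paper gives inline: the formula $f^{[k]}=\sum_j\binom{j}{k}a_jx^{j-k}$ shows that each coefficient is an integer divisible by $p^{v(f)}$, and evaluating at an integer preserves this divisibility. You also rightly point out that the identity $\frac{1}{k!}\frac{j!}{(j-k)!}=\binom{j}{k}$ removes the division by $k!$, so integrality holds even when $p\le k$.
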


\begin{lemma}\label{lem:root_multiplicity_in_F_p}
Let $f \in \Z[x]$ and let $x$ be an integer such that
$v_p\big(f^{[k]}(x)\big) > v(f)$ for all $0 \le k < m$. Then $x$ is a root of
multiplicity at least $m$ of $f_p^*$.
\end{lemma}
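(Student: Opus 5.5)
We will prove the lemma by applying the Hasse-derivative Taylor expansion to the normalized polynomial $f^* = f/p^{v(f)}$ and reading it modulo $p$. The point is that the Hasse derivatives of $f^*$ are exactly those of $f$ divided by $p^{v(f)}$, so the hypothesis on $f$ becomes a vanishing statement for $f_p^*$.

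First we check that dividing by $p^{v(f)}$ commutes with Hasse derivatives. Write $f^*=\sum_j a_j^* x^j$ with $a_j = p^{v(f)}a_j^*$. By \cref{obs:hasse},
\[
f^{[k]} = \sum_j \binom{j}{k} a_j x^{j-k} = p^{v(f)}\,(f^*)^{[k]}.
\]
Here $(f^*)^{[k]}$ has integer coefficients. Hence for every $k$,
\[
v_p\bigl((f^*)^{[k]}(x)\bigr)=v_p\bigl(f^{[k]}(x)\bigr)-v(f),
\]
interpreting the valuation of $0$ as $+\infty$. The hypothesis therefore says that $(f^*)^{[k]}(x)\equiv 0 \pmod p$ for all $0\le k<m$.

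Next we use the identity $f^*(x+y)=\sum_{k\ge0}(f^*)^{[k]}(x)\,y^k$ in $\mathbb{Z}[y]$ and reduce it modulo $p$. This gives
\[
f_p^*(\bar x + y) = \sum_{k\ge m} \overline{(f^*)^{[k]}(x)}\, y^k
\]
in $\mathbb{F}_p[y]$. So $y^m$ divides $f_p^*(\bar x+y)$. Substituting $y = z-\bar x$ shows that $(z-\bar x)^m$ divides $f_p^*(z)$. The substitution is a ring automorphism of $\mathbb{F}_p[z]$, so it preserves divisibility.

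Finally, the observation above gives $f_p^*\neq 0$. Therefore $(z-\bar x)^m \mid f_p^*$ means exactly that $\bar x$ is a root of multiplicity at least $m$ of $f_p^*$.

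The only step that needs care is the first one. We must make sure the integrality of $(f^*)^{[k]}$ comes from the binomial-coefficient formula, not from dividing by $k!$, which could fail modulo $p$. Once that is in place, the rest is formal.
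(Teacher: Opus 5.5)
Your proof is correct and follows the same route as the paper. You normalize to $v(f)=0$, making explicit the scaling $f^{[k]}=p^{v(f)}(f^*)^{[k]}$ that the paper leaves implicit. You then read the Hasse--Taylor expansion $f^*(x+y)=\sum_k (f^*)^{[k]}(x)\,y^k$ modulo $p$ to conclude that $y^m$ divides $f_p^*(\bar x+y)$.
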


\begin{proof}
Dividing $f$ by $p^{v(f)}$ we may assume $v(f) = 0$, so that $f_p^* = f_p$ and the
hypothesis becomes $p \mid f^{[k]}(x)$ for $0 \le k < m$. The expansion
$f(x + y) = \sum_{k \ge 0} f^{[k]}(x)\, y^k$ gives that the first $m$ coefficients of $f_p(x+y)$ vanish. Thus $x$ is a root of multiplicity at least $m$ in $f_p=f_p^*$.
\end{proof}

In this section, integers are represented in base $p$. Each integer has some number of digits in base $p$ which will be clear from the context. For example, the integers $1$ and $01$ in base $p$ do not represent the same integer because they do not have the same number of base $p$ digits, and therefore their lifts may result in different sets of numbers. For an integer $x$ with $m$ digits, we set 
\[f_x(a) := f(x+p^ma),\] 
where $a$ is the ``continuation'' of the base-$p$ expansion of $x$. $f_x$ is called the branching polynomial of $x$. We think of the nodes of the lifting tree as integers approximating the $p$-adic roots of $f$.

The root of the lifting tree is a $0$-digit integer. Its value is equal to $0$, but we denote it as the empty string $\epsilon$ in order to distinguish it from the $1$-digit integer $0$.   

Consider a node \(x_0\) in layer \(m\). We seek roots whose first \(m\)
base-\(p\) digits agree with \(x_0\). Such a root has the form
\[
    x=x_0+p^m a.
\]
Therefore it must satisfy
\[
    f_{x_0}(a)=f(x)=0.
\]
After removing the common power \(p^{v(f_{x_0})}\) from the coefficients, the
next digit \(\delta=a\bmod p\) must be a root of
\[
    (f_{x_0})_p^*(a)=0
\]
over \(\mathbb F_p\). For every such digit \(\delta\), we add the
\((m+1)\)-digit integer \(x_0+p^m\delta\) as a child of \(x_0\), and label the
edge by \(\delta\).

We emphasize that for efficiency purposes, the algorithm will not store a node by its value as an
integer. Instead, it stores the lifting tree as a rooted tree whose edges are
labeled by digits in \(\{0,\ldots,p-1\}\). A node is identified with the path
from the root to that node. If the edge labels on such a path are
\(\delta_0,\delta_1,\ldots,\delta_{m-1}\), then the corresponding \(m\)-digit
integer is
\[
    \delta_0+\delta_1p+\cdots+\delta_{m-1}p^{m-1}.
\]
This interpretation is used only when we finally turn layer-\(m\) nodes into
integer candidates. All intermediate computations use only the edge labels on
paths, not the integer values of the nodes.

Normalizing $f_{x_0}$ by $p^{v(f_{x_0})}$ before
reducing modulo $p$ is important. For example, take $f(x) = x^2$, and $p = 3$, and lift the $1$-digit node
$0$. Its branching polynomial is $f_0(a) = f(3a) = 9a^2$, so $v(f_0) = 2$ and
$(f_0)_3^*(a) = a^2$. The equation $a^2 = 0$ has the single solution
$a \equiv 0 \pmod 3$, producing only the $2$-digit child $00$. Had we instead
reduced $f_0$ modulo $p$ without normalizing, we would obtain the zero
polynomial, which every residue $a \in \{0,1,2\}$ solves; this would produce the extra children $01, 02$.

We call the $m$-digit integers in the tree the $m$-th layer of the tree.

Observe that the children of $\epsilon$ (the root of the tree) are the solutions of $f(x) \equiv 0 \pmod p$; and that the lifting tree might be infinite.

We show that every root of $f$ in $\mathbb Z_p$ determines an infinite
path in the lifting tree starting at $\epsilon$. 

\begin{lemma}\label{lem:lifting_tree_contains_roots}
Let $\alpha\in \mathbb Z_p$ be a root of $f$. Write
$$
\alpha=\sum_{i=0}^{\infty} \alpha_i p^i,
\qquad \alpha_i\in{0,\ldots,p-1}.
$$
Then the digits $\alpha_0,\alpha_1,\ldots$ form an infinite path in the lifting
tree starting at $\epsilon$. Equivalently, for every $m\geq 0$, the vertex
corresponding to the prefix
$$
\alpha^{(m)}:=\sum_{i=0}^{m-1}\alpha_i p^i
$$
appears in layer $m$ of the lifting tree.
\end{lemma}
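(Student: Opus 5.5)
We argue by induction on $m$: for every $m\ge 0$ the prefix $\alpha^{(m)}$ is a node of layer $m$, and for $m\ge 1$ it is the child of $\alpha^{(m-1)}$ along the edge labelled $\alpha_{m-1}$. The base case $m=0$ holds because $\alpha^{(0)}$ is the empty string $\epsilon$, the root of the tree.

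For the inductive step, assume $x_0:=\alpha^{(m)}$ is a node of layer $m$. By the construction of the tree, the children of $x_0$ are exactly the digits $\delta\in\{0,\ldots,p-1\}$ that are roots of $(f_{x_0})_p^*$ over $\mathbb F_p$. So it suffices to show that $\alpha_m$ is a root of $(f_{x_0})_p^*$.

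Set
\[
a:=\sum_{i\ge m}\alpha_i p^{i-m}\in\mathbb Z_p ,
\]
so that $\alpha=x_0+p^m a$ and $a\equiv \alpha_m \pmod p$. Then
\[
f_{x_0}(a)=f(x_0+p^m a)=f(\alpha)=0 .
\]
The polynomial $f_{x_0}$ is nonzero: $f$ is nonzero and the substitution $x\mapsto x_0+p^m x$ is invertible over $\mathbb Q$. Dividing by $p^{v(f_{x_0})}$ therefore gives $(f_{x_0})^*(a)=0$ in $\mathbb Z_p$.

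Since $(f_{x_0})^*$ has integer coefficients, reducing modulo $p$ commutes with evaluation at $a\in\mathbb Z_p$, via the ring homomorphism $\mathbb Z_p\to\mathbb F_p$. Hence
\[
(f_{x_0})_p^*(\alpha_m)=(f_{x_0})_p^*(a \bmod p)=0 .
\]
This shows $\alpha_m$ is a root, so $x_0+p^m\alpha_m=\alpha^{(m+1)}$ is a child of $x_0$, completing the induction. The nodes $\alpha^{(0)},\alpha^{(1)},\ldots$ are therefore each a child of the previous one, and the digits $\alpha_0,\alpha_1,\ldots$ form an infinite path from $\epsilon$.

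The only mildly delicate point is the passage to $\mathbb Z_p$. The tree is defined by integer polynomials, while $a$ is a $p$-adic integer, so we must check two things. First, $f_{x_0}$ is nonzero, so that $v(f_{x_0})$ and the normalization are defined. Second, evaluation followed by reduction modulo $p$ equals reduction followed by evaluation. Both follow directly from $\mathbb Z\subset\mathbb Z_p$ and the fact that reduction $\mathbb Z_p\to\mathbb F_p$ is a ring homomorphism. No estimates are needed.
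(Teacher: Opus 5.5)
Your proof is correct and follows the paper's argument: induction on $m$, writing $\alpha=\alpha^{(m)}+p^m a$ with $a\equiv\alpha_m \pmod p$, and using $f_{\alpha^{(m)}}(a)=f(\alpha)=0$ to conclude that $\alpha_m$ is a root of $(f_{\alpha^{(m)}})_p^*$. Your extra checks, that $f_{\alpha^{(m)}}\neq 0$ and that reduction $\mathbb Z_p\to\mathbb F_p$ commutes with evaluation, spell out a step the paper states in one line.
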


\begin{proof}
We prove the claim by induction on $m$. The case $m=0$ is the root $\epsilon$ of
the tree. Assume that the prefix
$$
\alpha^{(m)}=\sum_{i=0}^{m-1}\alpha_i p^i
$$
appears in layer $m$. Since $\alpha$ and $\alpha^{(m)}$ have the same first $m$
base-$p$ digits, we may write
$$
\alpha=\alpha^{(m)}+p^m\beta
$$
for some $\beta\in\mathbb Z_p$. Moreover,
$$
\beta \equiv \alpha_m \pmod p.
$$
By the definition of the shifted polynomial attached to the node
$\alpha^{(m)}$, we have
$$
f_{\alpha^{(m)}}(\beta)=f(\alpha)=0.
$$
Hence the reduction of $\beta$ modulo $p$, namely $\alpha_m$, is a root of
$(f_{\alpha^{(m)}})_p^*$. Therefore the lifting tree contains a child of
$\alpha^{(m)}$ whose next digit is $\alpha_m$. This child is exactly the prefix
$$
\alpha^{(m+1)}=\alpha^{(m)}+\alpha_m p^m.
$$
Thus the prefix of length $m+1$ appears in layer $m+1$. By induction, all
prefixes of $\alpha$ appear in the tree, so the digits of $\alpha$ determine an
infinite path starting at $\epsilon$.
\end{proof}

Our objective in this section will be to compute layer $b$ of the lifting tree. Since each non-negative integer root $x$ of $f$ satisfies $x < 2^b \le p^b$, it has at most $b$ digits in base $p$, so the lemma above proves that $x$ appears in layer $b$ of the lifting tree.

To bound the amount of time it takes us to compute the lifting tree, we need a quantity that controls the number of nodes in every layer. The relevant quantity is
the following weight function.

\begin{definition}
Let $x \neq \epsilon$ be a node with parent $y$, reached from $y$ by the digit $\delta$.
The \emph{weight} $w(x)$ is the multiplicity of $\delta$ as a root of $(f_y)_p^*$.
For the root we set $w(\epsilon) := \deg (f_\epsilon)_p^* = \deg f_p^*$.
\end{definition}

\begin{lemma}\label{lem:deg_le_weight}
For every node $x$, $\deg (f_x)_p^* \le w(x)$.
\end{lemma}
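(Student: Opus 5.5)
The root case is immediate: $w(\epsilon)=\deg f_p^*=\deg (f_\epsilon)_p^*$ by definition. For the remaining nodes, let $x\neq\epsilon$ be a node in layer $m+1$ with parent $y$ in layer $m$, reached by the digit $\delta$, so that $x=y+p^m\delta$. Set $w:=w(x)$, the multiplicity of $\delta$ as a root of $(f_y)_p^*$. Write $g:=f_y^*$, so $g_p=(f_y)_p^*$ and $v(g)=0$. Then
\[
f_x(a)=f(y+p^m\delta+p^{m+1}a)=f_y(\delta+pa)=p^{v(f_y)}\,g(\delta+pa),
\]
and hence $f_x^*=h^*$ with $h(a):=g(\delta+pa)$. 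The goal becomes showing that $\deg h_p^*\le w$.

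Expand $h$ using Hasse derivatives:
\[
h(a)=\sum_{k\ge0} g^{[k]}(\delta)\,p^k a^k .
\]
Since $\delta$ is a root of multiplicity exactly $w$ of $g_p$, the identity $g_p(\delta+y)=\sum_k g_p^{[k]}(\delta)y^k$ gives $p\mid g^{[k]}(\delta)$ for $k<w$ and $p\nmid g^{[w]}(\delta)$. Therefore the coefficient of $a^w$ in $h$ has $p$-adic valuation exactly $w$, and so $v(h)\le w$. On the other hand, for $k>w$ the coefficient of $a^k$ is $g^{[k]}(\delta)p^k$, which by \cref{obs:hasse} (with $v(g)=0$) has valuation at least $k>w\ge v(h)$. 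Dividing by $p^{v(h)}$, every coefficient of degree greater than $w$ becomes divisible by $p$, so it vanishes in $h_p^*$. This gives $\deg h_p^*\le w$, that is, $\deg (f_x)_p^*\le w(x)$.

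The main point needing care is the reduction from $f_x$ to $g=f_y^*$, which must account correctly for the power of $p$ removed at the parent. The observation that $f_x^*=h^*$, because the normalization is invariant under scaling by powers of $p$, handles this cleanly. The precise multiplicity statement, namely that the coefficients of the Taylor expansion of $g_p$ at $\delta$ vanish exactly below index $w$, is the converse direction of \cref{lem:root_multiplicity_in_F_p} and follows directly from the same expansion.
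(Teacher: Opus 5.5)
Your proof is correct. You use the same Hasse expansion $f_x(a)=f_y(\delta+pa)=\sum_k f_y^{[k]}(\delta)\,p^k a^k$ as the paper, but you run the valuation argument in the opposite direction.

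The paper sets $D=\deg(f_x)_p^*$ and reads off $v(f_x)\ge v(f_y)+D$ from the degree-$D$ coefficient. From this it deduces $v_p\bigl(f_y^{[k]}(\delta)\bigr)>v(f_y)$ for all $k<D$. It then invokes Lemma~\ref{lem:root_multiplicity_in_F_p} to conclude that $\delta$ has multiplicity at least $D$. So the paper only needs the stated direction ``vanishing low-order Hasse derivatives mod $p$ $\Rightarrow$ high multiplicity.''

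You instead use that the multiplicity is exact. From the Taylor expansion of $g_p$ at $\delta$ over $\mathbb{F}_p$ you get $p\nmid g^{[w]}(\delta)$, and the $a^w$ coefficient then pins $v(h)\le w$. Every coefficient of index $k>w$ carries $p^k$ with $k>v(h)$, so it dies mod $p$ after normalization. You never actually use $p\mid g^{[k]}(\delta)$ for $k<w$, even though you state it.

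Your mechanism is the one the paper itself uses in Lemma~\ref{lem:tree_equations_dont_have_large_vp}. As a by-product it gives the one-step bound $v(f_x)\le v(f_y)+w(x)$. The paper's version instead yields the complementary bound $v(f_x)\ge v(f_y)+\deg(f_x)_p^*$.

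Normalizing first to $g=f_y^*$, so that $f_x^*=h^*$, is cleaner bookkeeping of the $v(f_y)$ offset that the paper carries explicitly.
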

\begin{proof}
For the root the claim holds with equality, since $w(\epsilon) = \deg (f_\epsilon)_p^*$
by definition. So let $x \neq \epsilon$ be a node with parent $y$ at layer $m$, reached from
$y$ by the digit $\delta$; thus $x = y + p^m \delta$ with $0 \le \delta \le p-1$, and
$w(x)$ is the multiplicity of $\delta$ as a root of $(f_y)_p^*$. Write $D := \deg (f_x)_p^*$.

Expanding $f_y$ around $\delta$,
\[
    f_x(a) = f_y(\delta + pa) = \sum_{k=0}^{n} f_y^{[k]}(\delta) \, p^{k} \, a^{k},
\]
so the coefficient of $a^k$ in $f_x$ is $f_y^{[k]}(\delta)\,p^k$. By definition $D$ is the
largest index whose coefficient attains the minimal valuation $v(f_x)$ among the
coefficients of $f_x$. Hence, using $p^{v(f_y)} \mid f_y^{[D]}(\delta)$ from
\cref{obs:hasse},
\[
    v(f_x) = v_p\!\big(f_y^{[D]}(\delta)\,p^{D}\big)
           = v_p\!\big(f_y^{[D]}(\delta)\big) + D
           \;\ge\; v(f_y) + D.
\]

Now fix $0 \le k < D$. Since every coefficient of $f_x$ has valuation at least $v(f_x)$,
\[
    v_p\!\big(f_y^{[k]}(\delta)\big) + k = v_p\!\big(f_y^{[k]}(\delta)\,p^k\big)
        \ge v(f_x) \ge v(f_y) + D,
\]
so $v_p\!\big(f_y^{[k]}(\delta)\big) \ge v(f_y) + (D - k) \ge v(f_y) + 1 > v(f_y)$. By
\cref{lem:root_multiplicity_in_F_p} applied to $f_y$ at $\delta$, the digit $\delta$ is a
root of $(f_y)_p^*$ of multiplicity at least $D$. That multiplicity is exactly $w(x)$, so
$\deg (f_x)_p^* = D \le w(x)$, as desired.
\end{proof}

\begin{lemma}\label{lem:weights_of_children_sum_bound}
    The sum of weights of children of a node $x$ is at most $w(x)$.
\end{lemma}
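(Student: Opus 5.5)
The argument is short and combines the definition of the weights with \cref{lem:deg_le_weight}. Fix a node $x$ at layer $m$. By construction of the lifting tree, the children of $x$ are in bijection with the distinct roots $\delta_1,\dots,\delta_k$ of $(f_x)_p^*$ in $\mathbb{F}_p$: the child reached by the digit $\delta_i$ is $x+p^m\delta_i$. Distinct digits give distinct children, and every child arises this way. By definition of the weight, the child reached by $\delta_i$ has weight $w(x+p^m\delta_i)=m_i$, the multiplicity of $\delta_i$ as a root of $(f_x)_p^*$.

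Next I would use the basic fact that over the field $\mathbb{F}_p$ a nonzero polynomial is divisible by $\prod_i (a-\delta_i)^{m_i}$. The factors $(a-\delta_i)^{m_i}$ are pairwise coprime for distinct $\delta_i$, and each divides the polynomial. The nonvanishing needed here is guaranteed by the Observation that $h_p^*$ is always nonzero, applied to $h=f_x$. Comparing degrees gives
\[
\sum_{i=1}^k w(x+p^m\delta_i)=\sum_{i=1}^k m_i\le \deg (f_x)_p^*.
\]
Finally, \cref{lem:deg_le_weight} gives $\deg (f_x)_p^*\le w(x)$, which finishes the proof.

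I do not expect a real obstacle. The only points to check are the following. The polynomial $(f_x)_p^*$ must be nonzero, so that its multiplicities are finite and bounded by the degree; this is where the normalization by $p^{v(f_x)}$ matters. The children must correspond injectively to distinct roots. And when $x=\epsilon$, the inequality holds with equality in the degree step by the definition of $w(\epsilon)$, which is the base case already covered in \cref{lem:deg_le_weight}.
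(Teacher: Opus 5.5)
Your proposal is correct and follows essentially the same argument as the paper. The children of $x$ correspond to distinct roots of the nonzero polynomial $(f_x)_p^*$, with weights equal to their multiplicities. These multiplicities sum to at most $\deg (f_x)_p^*$, and \cref{lem:deg_le_weight} bounds that degree by $w(x)$.
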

\begin{proof}
Each child of $x$ has a distinct last digit, which is a root of $(f_x)_p^*$, and its
weight is the multiplicity of that digit in $(f_x)_p^*$. Multiplicities of distinct
roots sum to at most the degree, so the children's weights sum to at most
$\deg(f_x)_p^*$, which is at most $w(x)$ by \cref{lem:deg_le_weight}.
\end{proof}
\begin{lemma}\label{lem:tree_layers_have_atmost_n_nodes}
    Every layer of the lifting tree contains at most $w(\epsilon)=\deg(f_p^*)$ nodes.
\end{lemma}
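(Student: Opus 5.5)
The plan is to show, by induction on $m$, that the total weight of layer $m$ is at most $w(\epsilon)$, i.e.
\[
    \sum_{x \in \text{layer } m} w(x) \le w(\epsilon),
\]
and then to convert this weight bound into a bound on the number of nodes.

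The base case $m=0$ is trivial, since layer $0$ consists only of $\epsilon$. For the inductive step, every node in layer $m+1$ has a unique parent in layer $m$. Grouping the nodes of layer $m+1$ by their parents and applying \cref{lem:weights_of_children_sum_bound} to each parent gives
\[
    \sum_{x\in \text{layer } m+1} w(x) = \sum_{y\in\text{layer } m}\ \sum_{x \text{ child of } y} w(x) \le \sum_{y\in\text{layer } m} w(y) \le w(\epsilon).
\]

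To finish, observe that every non-root node has weight at least $1$. Indeed, a child of $y$ reached by the digit $\delta$ exists only when $\delta$ is a root of $(f_y)_p^*$. This polynomial is nonzero by the observation following the definition of $h^*$, so the multiplicity of $\delta$ is a well-defined positive integer. Hence the number of nodes in layer $m\ge 1$ is at most the total weight of that layer, which is at most $w(\epsilon)=\deg f_p^*$. Layer $0$ has exactly one node. This is at most $w(\epsilon)$ unless $\deg f_p^*=0$, in which case $f_p^*$ is a nonzero constant, the root has no children, and every later layer is empty. For this edge case I would state the bound for layers $m\ge1$, or simply note it separately.

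I expect no real obstacle here. The only point needing care is the positivity of the weights, which rests on $(f_y)_p^*$ being nonzero, so that multiplicities are finite and positive.
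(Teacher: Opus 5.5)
Your proof is correct and matches the paper's argument: induct on layers, using Lemma~\ref{lem:weights_of_children_sum_bound} to bound each layer's total weight by $w(\epsilon)$, then use positivity of weights to bound the node count. Your remark on the degenerate case $\deg f_p^*=0$ is a fair catch that the paper's blanket claim ``$w(x)\ge 1$ for every node'' overlooks. There layer $0$ has one node but $w(\epsilon)=0$; this is harmless, since every later layer is empty.
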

\begin{proof}
By an easy induction using the lemma above, the sum of weights of the nodes in any layer of the tree is bounded by $\deg(f_p^*)$. Since $w(x) \ge 1$ for every node $x$, the number of nodes in a given layer is bounded by the sum of their weights, which is at most $\deg(f_p^*)$.
\end{proof}

The previous lemmas show that the lifting tree has few nodes in each layer.
For an efficient algorithm, this is not enough: the branching polynomials
\(f_x\) themselves may have very large coefficients. Fortunately, to find the
children of \(x\), we only need \((f_x)_p^*\). This can be recovered from
\(f_x\) modulo \(p^{v(f_x)+1}\). Hence we need an upper bound on \(v(f_x)\)
along the tree.

\begin{lemma}\label{lem:tree_equations_dont_have_large_vp}
Let $x$ be a node in layer $m$, and let $x_0$ denote the residue of $x$ mod $p$. Denote $\mu = w(x_0)$. Then $v(f_x)\le v(f)+\mu m$.
\end{lemma}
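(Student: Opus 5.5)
We prove the bound by induction along the path from $\epsilon$ to $x$, with a per-step increment bound. The key claim is: if $y$ is a node in layer $j\ge 1$ and $z$ is its child, then
\[
v(f_z)\le v(f_y)+\deg (f_y)_p^* \le v(f_y)+w(y).
\]
For the root step, from $\epsilon$ to $x_0$, we have the analogous bound $v(f_{x_0})\le v(f)+\deg(f_\epsilon)_p^*$. That is too weak if used with $w(\epsilon)$, so we will handle the first step separately using $\mu=w(x_0)$.

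\textbf{Per-step bound.} Let $z=y+p^j\delta$ with $\delta$ a root of $(f_y)_p^*$, and set $D:=\deg(f_y)_p^*$. As in the proof of \cref{lem:deg_le_weight}, we have $f_z(a)=f_y(\delta+pa)=\sum_k f_y^{[k]}(\delta)p^k a^k$. Equivalently, writing $f_y=p^{v(f_y)}f_y^*$, we get $f_z(a)=p^{v(f_y)}f_y^*(\delta+pa)$. The coefficient of $a^D$ in $f_y^*(\delta+pa)$ is $p^D$ times the coefficient of $x^D$ in $f_y^*(\delta+x)$, i.e.\ $p^D\,(f_y^*)^{[D]}(\delta)$. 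Since $f_y^*$ has degree-$D$ leading term nonzero mod $p$ and all higher coefficients divisible by $p$, we get $(f_y^*)^{[D]}(\delta)\equiv \mathrm{lc}((f_y)_p^*)\not\equiv 0\pmod p$. Indeed, $(f_y^*)^{[D]}(\delta)=\sum_{j\ge D}\binom{j}{D}c_j\delta^{j-D}$, and modulo $p$ only the $j=D$ term survives. Hence that coefficient has valuation exactly $v(f_y)+D$, and so $v(f_z)\le v(f_y)+D\le v(f_y)+w(y)$ by \cref{lem:deg_le_weight}.

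\textbf{First step.} The same computation applied at the root gives $v(f_{x_0})\le v(f)+w(x_0)=v(f)+\mu$, provided we use the coefficient of $a^{\mu'}$, where $\mu'$ is the multiplicity of $x_0$ as a root of $f_p^*$. We expand $f^*(x_0+pa)$ and look at the coefficient $p^{\mu'}(f^*)^{[\mu']}(x_0)$. Because $x_0$ is a root of $f_p^*$ of exact multiplicity $\mu'=w(x_0)$, the Hasse derivative $(f^*)^{[\mu']}(x_0)$ is a unit mod $p$. This follows from the Taylor expansion of $f_p^*$ at $x_0$: the first nonzero coefficient is at index $\mu'$. Thus $v(f_{x_0})\le v(f)+\mu$. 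The same argument, using exact multiplicity, also gives each later step $v(f_z)\le v(f_y)+w(z)$, which is at least as good.

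\textbf{Induction.} Along the path $x_0=z_1,z_2,\dots,z_m=x$, repeated application of \cref{lem:weights_of_children_sum_bound} gives weights that are non-increasing: $w(z_{i+1})\le w(z_i)\le\mu$. Summing the $m$ increments, each at most $\mu$, yields $v(f_x)\le v(f)+\mu m$.

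\textbf{Main obstacle.} The main point is to choose the right coefficient to witness the valuation. We use the index equal to the multiplicity of the chosen digit, and verify that the corresponding Hasse derivative of the normalized polynomial is a $p$-adic unit. This is just the statement that the Taylor coefficients of $(f_y)_p^*$ at $\delta$ below index $w(z)$ vanish while the one at index $w(z)$ does not.
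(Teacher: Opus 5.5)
Your proof is correct, but it is organized differently from the paper's. The paper does not telescope along the path. After normalizing to $v(f)=0$, it expands $f_x(a)=f(x+p^m a)=\sum_k f^{[k]}(x)p^{mk}a^k$ once and looks at the single coefficient of $a^{\mu}$, namely $f^{[\mu]}(x)p^{m\mu}$. Since $x\equiv x_0 \pmod p$ and $x_0$ has exact multiplicity $\mu$ in $f_p$, we have $f^{[\mu]}(x)\equiv f^{[\mu]}(x_0)\not\equiv 0 \pmod p$, which gives $v(f_x)\le m\mu$ immediately.

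That argument uses the same key fact you isolate: the Hasse derivative at the index equal to the exact multiplicity is a $p$-adic unit. The paper applies it only once, and needs neither \cref{lem:deg_le_weight} nor \cref{lem:weights_of_children_sum_bound}. In fact it holds for any $m$-digit integer $x\equiv x_0\pmod p$, whether or not $x$ is a node.

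Your edge-by-edge version applies that fact at every step to get $v(f_z)\le v(f_y)+w(z)$. It then needs the tree structure (each digit is a root of the normalized branching polynomial) and the monotonicity of weights along a path to cap each increment by $\mu$. In exchange, it proves the sharper bound $v(f_x)\le v(f)+\sum_{i=1}^m w(z_i)$, which can be much smaller than $v(f)+\mu m$ when the weights drop along the path.

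Your preliminary per-step bound via $\deg(f_y)_p^*$ is correct, but it ends up superseded by the exact-multiplicity version and could be dropped.
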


\begin{proof}
    If $m=0$, the bound is trivial. Dividing $f$ by $p^{v(f)}$, we may assume $v(f)=0$. The coefficient of $a^\mu$ in $f_x(a)$ is $f^{[\mu]}(x)p^{m\mu}$. Since $x_0$ is of multiplicity $\mu$ in $f_p$, \cref{lem:root_multiplicity_in_F_p} guarantees that $p \nmid f^{[\mu]}(x_0)$; otherwise $x_0$ would be of multiplicity at least $\mu+1$. Thus $p \nmid f^{[\mu]}(x)$ as $x \equiv x_0 \pmod p$. As a corollary, $v(f_x) \le v_p(f^{[\mu]}(x)p^{m\mu}) = m\mu$.
\end{proof}

\begin{lemma}\label{lem:coefficients_matter_only_mod_p^sb}
    Suppose that for every layer-1 node $x_0$ we have $w(x_0)\le \mu$. Then layers 0 to $m+1$ of the lifting tree depend only on the coefficients of $f$ modulo $p^{v(f)+m\mu+1}$.
\end{lemma}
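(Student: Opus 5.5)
The plan is to compare $f$ with an arbitrary $g\in\mathbb Z[x]$ satisfying $g\equiv f \pmod{p^{N}}$, where $N:=v(f)+m\mu+1$. We show that the lifting trees of $f$ and $g$ agree on layers $0$ through $m+1$. It suffices to prove, by induction on $j$ for $0\le j\le m$, two things: layers $0,\dots,j$ of the two trees coincide, and for every node $x$ in layer $j$,
\[
(g_x)_p^*=(f_x)_p^*.
\]
The children of $x$ in layer $j+1$ are exactly the roots of this polynomial in $\mathbb F_p$, so equality at layer $j$ gives equality of layer $j+1$. Taking $j=m$ yields the claim for layers $0$ through $m+1$. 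As a by-product, the multiplicities of these roots also agree, so the weights agree as well.

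For a fixed node $x$ in layer $j$ that belongs to both trees, the key steps are as follows.

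\emph{Step 1.} Write $g_x(a)-f_x(a)=(g-f)(x+p^ja)$. Every coefficient of $g-f$ is divisible by $p^{N}$, and substituting the integer-coefficient polynomial $x+p^ja$ preserves this. Hence every coefficient of $g_x-f_x$ is divisible by $p^{N}$.

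\emph{Step 2.} Bound $v(f_x)$. For $j=0$ we have $v(f_\epsilon)=v(f)<N$. For $j\ge1$, let $x_0$ be the layer-1 ancestor of $x$. By \cref{lem:tree_equations_dont_have_large_vp} applied to $f$,
\[
v(f_x)\le v(f)+w(x_0)\,j\le v(f)+\mu m<N,
\]
using the hypothesis $w(x_0)\le\mu$.

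\emph{Step 3.} Conclude equality of the normalized reductions. Since the coefficients of $f_x$ and $g_x$ agree modulo $p^{N}$ and $v(f_x)<N$, the minimal valuation among the coefficients of $g_x$ is exactly $v(f_x)$. Thus $v(g_x)=v(f_x)=:v$. Dividing by $p^{v}$ gives $g_x^*\equiv f_x^*\pmod{p^{N-v}}$ with $N-v\ge1$. Therefore $(g_x)_p^*=(f_x)_p^*$.

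The only point needing care is that \cref{lem:tree_equations_dont_have_large_vp} and the weight hypothesis are statements about the tree of $f$. I therefore always apply them to $f$, and use the induction hypothesis only to know that the nodes of $g$'s tree up to layer $j$ are the same nodes as in $f$'s tree. This avoids any circularity. Beyond this bookkeeping, I do not expect a real obstacle; the bound from Step 2 is precisely what makes the precision $p^{v(f)+m\mu+1}$ suffice.
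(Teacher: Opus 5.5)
Your proposal is correct and follows essentially the same route as the paper: an induction on layers, using \cref{lem:tree_equations_dont_have_large_vp} to get $v(f_x)\le v(f)+\mu m$, so that $(f_x)_p^*$ is determined by $f$ modulo $p^{v(f)+m\mu+1}$. Your comparison with a congruent polynomial $g$ spells out what the paper leaves implicit, namely that $v(g_x)=v(f_x)$ and why applying the lemma only to the tree of $f$ avoids circularity.
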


\begin{proof}
    We prove this by induction on the layer number, with a trivial base case. Suppose the claim is proved for some layer $k\le m$. The children of a node $x$ on layer $k$ depend only on $x$ and $(f_x)_p^*$. Let $x_0 = x \pmod p$. Then by the previous lemma, $v(f_x) \le  v(f) + w(x_0)k \le v(f)+\mu k \le v(f)+\mu m$, so the coefficients of $(f_x)_p^*$ depend only on $x$ and on the coefficients of $f$ modulo $p^{v(f)+\mu m+1}$. Thus the claim is proved. 
\end{proof}

The next lemma will only be used in \cref{sec:verifying}. We place it here because the proof of the lemma requires tools from this section.

\begin{lemma} \label{x_appears_on_lifting_tree}
    Let $a_0$ denote the constant coefficient of $f$, and let $b>1$ be an integer. Suppose that $p \mid a_0$. Let $k = v_p(a_0)$. Then there are at most $k$ distinct non-negative integers $x$ satisfying $x < p^b$, $p \mid x$, and $p^{bk} \mid f(x)$.
\end{lemma}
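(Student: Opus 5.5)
We pass to the polynomial $f_0(y):=f(py)$, which is the branching polynomial $f_x$ of the one-digit node $x=0$. We then run the lifting-tree argument for $g:=f_0^*$ instead of $f$. Every admissible $x$ has the form $x=py$ with $0\le y<p^{b-1}$ and $f(x)=f_0(y)$. Distinct such $x$ give distinct $y$, and hence distinct $(b-1)$-digit strings. So it suffices to show two things: layer $b-1$ of the lifting tree of $g$ has at most $k$ nodes, and every such $y$ appears in that layer. (We assume $a_0\neq 0$, as is implicit in $v_p(a_0)$ being defined; then $k\ge 1$.)

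\textbf{Step 1 (degree bound).} The coefficient of $y^j$ in $f_0$ is $a_jp^j$. Let $v:=v(f_0)$. Since the constant coefficient is $a_0$, we have $v\le v_p(a_0)=k$. Let $D:=\deg g_p$. The coefficient $a_Dp^D$ attains the minimal valuation $v$, so $D\le v\le k$. Hence $w(\epsilon)=\deg g_p\le k$ in the lifting tree of $g$. By \cref{lem:tree_layers_have_atmost_n_nodes}, every layer has at most $k$ nodes. Moreover, every layer-1 node has weight at most $w(\epsilon)\le k$, since it is a root multiplicity of $g_p$.

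\textbf{Step 2 (valuation of $g(y)$).} The hypothesis $p^{bk}\mid f(x)=p^{v}g(y)$ gives $v_p(g(y))\ge bk-v\ge (b-1)k$.

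\textbf{Step 3 (every $y$ lies in layer $b-1$).} This is the main step, and it is an analogue of \cref{lem:lifting_tree_contains_roots} for an approximate root rather than an exact one. We induct on $m$, showing that the $m$-digit prefix $y^{(m)}$ of $y$ lies in layer $m$ for $0\le m\le b-1$; the case $m=0$ is the root $\epsilon$. Suppose $y^{(m)}$ is in the tree with $m\le b-2$, and write $y=y^{(m)}+p^m\beta$. Then $g_{y^{(m)}}(\beta)=g(y)$. By \cref{lem:tree_equations_dont_have_large_vp} applied to $g$, using $v(g)=0$ and layer-1 weights at most $k$, we get
\[
v(g_{y^{(m)}})\le km .
\]
Therefore
\[
v_p\bigl(g_{y^{(m)}}^*(\beta)\bigr)\ge (b-1)k-km\ge k\ge 1 .
\]
So $\beta\bmod p$, which is the $m$-th digit of $y$, is a root of $(g_{y^{(m)}})_p^*$. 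Hence $y^{(m+1)}$ is a child of $y^{(m)}$ in the tree.

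\textbf{Conclusion.} Each admissible $x$ yields a distinct node of layer $b-1$, and that layer has at most $k$ nodes. The delicate point is the precision bookkeeping in Step 3. The bound $v(g_{y^{(m)}})\le km$ must stay strictly below $v_p(g(y))$ for every $m\le b-2$, and this is exactly why the hypothesis is $p^{bk}$ rather than a smaller power.
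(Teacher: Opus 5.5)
Your proof is correct and is essentially the paper's argument. You pass to $f(py)$, show that its normalized reduction mod $p$ has degree at most $k$ (so every layer of its lifting tree has at most $k$ nodes), and then induct using \cref{lem:tree_equations_dont_have_large_vp} to show that the digits of $x/p$ trace a path down to layer $b-1$. The only difference is cosmetic: you normalize to $g=f_0^*$ first, giving the bound $v(g_{y^{(m)}})\le km$ against $v_p(g(y))\ge (b-1)k$. The paper instead keeps $f(py)$ unnormalized and compares $v\le k+mk\le k(b-1)$ against $v_p(f(x))\ge kb$.
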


\begin{proof}
    Let $g(y) = f(py)$. Then $v(g) \le v_p(a_0) = k$, while every coefficient of $g$
of degree greater than $k$ is divisible by $p^{k+1}$: the degree-$i$ coefficient
of $g$ is $a_i p^i$, of valuation $v_p(a_i) + i \ge i > k$ for $i > k$.
Consequently $\deg g_p^* \le k$.
 
Write $x = pz$. We show by induction on $m$ that for every $0 \le m \le b-1$, the
first $m$ digits of $z$ form a node on layer $m$ of the lifting tree of $g$. The
base case $m = 0$ is trivial.
 
Assume the first $m$ digits of $z$ give a node $z_0$ on layer $m$, with
$m \le b-2$, and write $z = z_0 + p^ma$. By \cref{lem:tree_equations_dont_have_large_vp},
\[
    v(g_{z_0}) \le v(g) + m \deg(g_p^*) \le k + mk \le k(b-1).
\]
On the other hand,
\[
    g_{z_0}(a) = g(z_0 + p^m a) = g(z) = f(pz) = f(x),
\]
so by hypothesis $v_p\big(g_{z_0}(a)\big) \ge kb$. Hence
\[
    v_p\big(g_{z_0}^*(a)\big)
    = v_p\big(g_{z_0}(a)\big) - v(g_{z_0})
    \ge kb - k(b-1) = k \ge 1,
\]
so $a \bmod p$ is a root of $(g_{z_0})_p^*$, and the corresponding child of $z_0$
is exactly the node given by the first $m+1$ digits of $z$. This completes the
induction.

Combining $\deg g_p^* \le k$ with
\cref{lem:tree_layers_have_atmost_n_nodes} gives that layer $b-1$ of the lifting tree
of $g$ contains at most $k$ nodes. As $z = \frac{x}{p} < p^{b-1}$, $z$ has $b-1$ digits, so every admissible $x$ yields a distinct $z$ on that
layer. Thus there are at most $k$ such values of $x$.
\end{proof}

\subsection{Tree building algorithm}

Throughout this section, the $\tilde{O}$ notation hides $\log p$ factors that come from arithmetic operations on $p$-adic digits. This is consistent with the complexity requirement of \cref{thm:lifting_roots} since $\log p = O(\log(nbt+(\sqrt{n}+t)\sqrt{p}))$.  

We now turn the preceding structural lemmas into an algorithm. The difficulty is that the branching polynomials \(f_x\) may have
coefficients with many \(p\)-adic digits. Therefore, every polynomial is maintained only modulo a
suitable power of \(p\).

For a polynomial \(g\in\mathbb Z[x]\), saying that \(g\) is known to precision
\(k\) means that we
know \(g\bmod p^k\). That is, each coefficient of \(g\) is known modulo \(p^k\).

The precision is chosen so that it is large enough to
determine relevant future branching decisions, but small enough that arithmetic
remains efficient.

In particular, in order to compute the children of some node $x$, $f_x^*$ must be known to precision at least $1$.

The basic operation in the algorithm is the following. Suppose that we already
know the normalized branching polynomial $f_{x_1}^*$ at some node \(x_1\), and we know
the digits along the path from \(x_1\) to a descendant \(x_2\). Then we can
recover the normalized branching polynomial at \(x_2\), losing only a bounded amount
of precision.

\begin{lemma}\label{lem:computing_branching_polys}
    Let $x_1,x_2$ be nodes of layers $m_1<m_2$ such that $x_1$ is an ancestor of $x_2$. We are given $f_{x_1}^*$ to precision $k$, and the list of digits along the path from $x_1$ to $x_2$. Let $y$ be the child of $x_1$ on the path from $x_1$ to $x_2$. Then, assuming $k \ge w(y)(m_2-m_1),$ we can compute $f_{x_2}^*$ to precision $k-w(y)(m_2-m_1)$ in $\tilde{O}(nk)$ time. 
\end{lemma}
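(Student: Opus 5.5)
Write $x_2 = x_1 + p^{m_1} z$, where $z = \delta_{m_1} + \delta_{m_1+1} p + \cdots + \delta_{m_2-1} p^{m_2-m_1-1}$ is read off from the path digits. Set $L = m_2 - m_1$. Then
\[
f_{x_2}(a) = f_{x_1}(z + p^{L} a),
\]
and hence $f_{x_2} = p^{v(f_{x_1})}\, f_{x_1}^*(z + p^L a)$. So the plan is to compute $h(a) := f_{x_1}^*(z+p^L a)$ modulo $p^k$. This is a single Taylor shift composed with a scaling, done by \cref{sec:known}\eqref{comp:taylor_shift} over the ring $\mathbb Z/p^k\mathbb Z$. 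It costs $\tilde O(n)$ ring operations of $\tilde O(k\log p)$ bits each, i.e.\ $\tilde O(nk)$ time. Since $f_{x_2}^* = h^*$, it then remains to find $v(h)$ and divide by $p^{v(h)}$. From $h \bmod p^k$ we can read off $h^* \bmod p^{k-v(h)}$ exactly, provided $v(h) < k$, i.e.\ provided some coefficient of $h$ is nonzero modulo $p^k$. The whole lemma therefore reduces to the bound
\[
v(h) = v(f_{x_2}) - v(f_{x_1}) \le w(y)\,L,
\]
which gives precision $k - v(h) \ge k - w(y)L$. The hypothesis $k \ge w(y)L$ then guarantees this precision is nonnegative; we simply truncate to precision exactly $k-w(y)L$. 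To decide $v(h)$ we scan the coefficients of $h \bmod p^k$ for the minimal valuation, in $\tilde O(nk)$ time. If every coefficient vanishes mod $p^k$, then the bound forces $k = w(y)L$ exactly, and we output precision $0$, which is trivial.

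The key step, and the main obstacle, is the valuation bound. I would prove it by applying \cref{lem:tree_equations_dont_have_large_vp} to the polynomial $g := f_{x_1}^*$ rather than to $f$. The lifting tree of $g$ is exactly the subtree of the lifting tree of $f$ rooted at $x_1$. Indeed, for any digit string $u$ of length $j$, we have $g_u(a) = f_{x_1}^*(u + p^j a) = p^{-v(f_{x_1})} f_{x_1 + p^{m_1} u}(a)$. Hence $(g_u)_p^* = (f_{x_1+p^{m_1}u})_p^*$, so the children and the weights coincide node by node. In particular, the layer-$1$ node of $g$'s tree on the path to $x_2$ is the node $\delta_{m_1}$, whose weight in $g$'s tree equals $w(y)$. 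This holds because $g_\epsilon = g$ and $(g)_p^* = (f_{x_1})_p^*$, so the multiplicity of $\delta_{m_1}$ agrees.

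The node $z$ lies in layer $L$ of $g$'s tree. Applying \cref{lem:tree_equations_dont_have_large_vp} with $v(g) = 0$ gives $v(g_z) \le w(y)\,L$. Since $h = g_z$, this is exactly the required bound.

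One subtlety to check is that the lemma is stated for integer polynomials, while we only hold $g$ modulo $p^k$. This is harmless: the bound concerns the true integer polynomial $g = f_{x_1}^*$, and our computations modulo $p^k$ agree with it.
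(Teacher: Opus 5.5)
Your proposal is correct and takes the same route as the paper. The paper likewise Taylor-shifts $f_{x_1}^*$ modulo $p^k$ to obtain $f_{x_2}^{\diamondsuit}(z)=f_{x_1}^*(a+p^{m_2-m_1}z)$, and bounds its valuation by $w(y)(m_2-m_1)$ via \cref{lem:tree_equations_dont_have_large_vp} applied to the lifting tree of $f_{x_1}^*$. Your additional steps fill in details the paper leaves implicit: identifying that tree with the subtree below $x_1$ (so the relevant layer-1 weight is $w(y)$), recovering $v(h)$ from the coefficients modulo $p^k$, and handling the degenerate all-zero case.
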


\begin{proof}
    Writing $x_2 = x_1 + p^{m_1}a$ we see that the digits on the path from $x_1$ to $x_2$ form the integer $a$ in base $p$. Observe that
    \[
    f_{x_2}(z) = f_{x_1}(a+p^{m_2-m_1}z)
    \]
    Define
    \[
    f_{x_2}^{\diamondsuit}(z) = f_{x_1}^*(a+p^{m_2-m_1}z)
    \]
    and note that $(f_{x_2}^{\diamondsuit})^* = f_{x_2}^*$.
     Using \cref{lem:tree_equations_dont_have_large_vp} on the lifting tree of $f_{x_1}^*$ we get $v(f_{x_2}^{\diamondsuit}) \le w(y)(m_2-m_1)$. Thus $f_{x_2}^* \pmod {p^{k-w(y)(m_2-m_1)}}$ is determined by $f_{x_2}^{\diamondsuit} \pmod {p^{k}}$ which can be inferred from $f_{x_1}^* \pmod {p^{k}}$. Notice that the quantity $m_2-m_1$ can be inferred from the number of edges on the path from $x_1$ to $x_2$. Using \cref{sec:known}\eqref{comp:taylor_shift}, we can compute $f_{x_1}^*(a+p^{m_2-m_1}z) = f_{x_2}^{\diamondsuit}(z) \pmod {p^{k}}$ in $\tilde{O}(n)$ operations in $\mathbb{Z}_{p^{k}}$, which takes $\tilde{O}(nk)$ time. From here we get $(f_{x_2}^{\diamondsuit})^* \pmod {p^{k-w(y)(m_2-m_1)}}$ which is equal to $f_{x_2}^* \pmod {p^{k-w(y)(m_2-m_1)}}$.
\end{proof}

Our goal is to prove the following.

\begin{lemma}\label{lem:build_tree_cost}
Let $f \in \Z[x]$ be given to precision $bW$, with $v(f)=0$ and $\deg f_p \le W$. Then layers $0$ to $b$ of the lifting tree, together with all node weights, can be computed in $\tilde{O}(W(nb+\sqrt{p}))$ time.
\end{lemma}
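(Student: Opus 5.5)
We argue by divide and conquer on the number of layers, following the plan sketched in the proof overview. Let $T(b,W)$ be the cost of building layers $0$ through $b$ of the lifting tree of a polynomial $f$ with $v(f)=0$ and $\deg f_p\le W$, given to precision $bW$. The goal is $T(b,W)=\tilde{O}(W(nb+\sqrt p))$, and we aim for a recursion whose top-level work, outside the recursive calls, is $\tilde{O}(Wnb)$ plus root-finding cost.

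\emph{Base case $b=1$.} Here we know $f$ modulo $p^{W}$, hence $f_p$. By \cref{lem:find_roots_and_multiplicities} we obtain its roots and multiplicities, which give the layer-1 nodes and their weights, in time $\tilde{O}(\sqrt{dp}+n\log^2p)$ with $d\le W$. This is within $\tilde{O}(W(n+\sqrt p))$.

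\emph{Recursive step.}
\begin{enumerate}
\item Recursively build layers $0$ to $b/2$. We only know $f$ to precision $bW\ge (b/2)W$, and by \cref{lem:coefficients_matter_only_mod_p^sb} (with $\mu\le W$, $v(f)=0$) this precision suffices to determine these layers.
\item For every node $x$ in the middle layer, compute $f_x^*$ to precision $(b/2)w(x)$. By \cref{lem:computing_branching_polys}, starting from precision $bW$ and descending $b/2$ layers costs at most $W\cdot b/2$ precision, so precision $(b/2)W\ge (b/2)w(x)$ remains.
\item Each $f_x^*$ has $v=0$ and, by \cref{lem:deg_le_weight}, $\deg (f_x^*)_p\le w(x)$. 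Recurse on each such $x$ with parameters $(b/2,w(x))$.
\end{enumerate}
Since the middle-layer weights sum to at most $W$ by \cref{lem:weights_of_children_sum_bound}, the recursion is $T(b,W)\le T(b/2,W)+\sum_x T(b/2,w(x))+C(b,W)$, where $C$ is the cost of step 2. If $C=\tilde{O}(Wnb)$, then by linearity in $W$ each level of recursion costs $\tilde{O}(Wnb)$, there are $\log b$ levels, and the $\sqrt p$ terms come only from the leaves, contributing total $\tilde{O}(W\sqrt p)$ per recursion level because leaf weights sum to $W$. One caveat: $\sum_x\sqrt{d_x p}\le \sum_x w(x)\sqrt p$ holds since $d_x\le w(x)$, but the $n\log^2p$ term per leaf must be replaced by $\deg$-dependent cost. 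We handle this by working with the degree-truncated polynomial, since $\deg f_p^*\le w$ and only the first $w+1$ coefficients mod $p$ matter.

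\emph{Main obstacle: step 2.} Applying \cref{lem:computing_branching_polys} directly from the root to each of up to $W$ middle nodes would cost $W\cdot\tilde{O}(n\cdot bW)$, which is too much. The plan is a heavy-path decomposition of the built tree.
\begin{itemize}
\item Let the spine be the path that always follows the child of largest weight.
\item Compute branching polynomials along the spine by a divide-and-conquer over its length. Jump to the spine's midpoint via \cref{lem:computing_branching_polys}, then recurse on the two halves with the reduced precision.
\item Each light subtree hanging off the spine at a child $y$ has $w(y)\le W/2$ (or strictly less than the heavy sibling's weight). Obtain its root polynomial by one shift from the spine node, and recurse inside it with weight $w(y)$ and correspondingly smaller precision.
\end{itemize}
The charging argument to push through is as follows. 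Every node's polynomial is computed at precision proportional to its weight times the remaining depth. In any light-subtree recursion, weights halve at each light edge, so a given unit of weight is charged $O(\log W)$ times. Each charge costs $\tilde{O}(n\cdot\text{precision})=\tilde{O}(nbw)$, and these sum to $\tilde{O}(nbW)$.

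The delicate point is keeping the precision needed at deep nodes bounded by $w(\cdot)$ times the remaining depth rather than $W$ times it. This relies on \cref{lem:computing_branching_polys} losing precision only by $w(y)$ of the first child on the path, which is exactly the quantity that shrinks along light edges.
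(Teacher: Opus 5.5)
Your architecture is the paper's: halving on the number of layers, a heavy-path spine handled by divide and conquer, and recursion into light subtrees whose weights halve. However, two steps of your cost accounting fail as written, and each is exactly where the paper needs an extra idea.

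\emph{The $\sqrt p$ term.} Root finding (the case $b=1$) runs once for every node $v$ in layers $0,\dots,b-1$ of the lifting tree, not only at leaves. These nodes have total weight up to $bW$; already at depth $j$ of the layer recursion the subproblem weights sum to as much as $2^jW$. So $\sum_v\sqrt{d_vp}\le\sum_v w(v)\sqrt p$ only yields $\tilde{O}(bW\sqrt p)$. This exceeds the target $\tilde{O}(W(nb+\sqrt p))$ by a factor of order $\min(b,\sqrt p/n)$.

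This is not just slack in the bound. For $f=(x-\alpha)^W$, every node on the single path has $(f_v)_p^*$ equal to a $W$-th power of a linear form. Running the search of \cref{comp:roots_mod_p} at each such node costs $\sqrt{Wp}$ per layer.

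The missing point is that \cref{lem:find_roots_and_multiplicities} first computes $\gcd(f,x^p-x)$ and pays no $\sqrt p$ at all when $d_v\le 1$. Only branching nodes pay. The leaves $L$ of the tree truncated at layer $b$ form an antichain of total weight at most $W$, so
\[
\sum_{v:\,d_v\ge 2}\sqrt{d_v}\;\le\;2\sum_{v:\,d_v\ge 2}(d_v-1)\;=\;2(|L|-1)\;\le\;2W .
\]
Your degree-truncation caveat is unnecessary: there are at most $bW$ base-case calls, so the $n\log^2p$ terms already total $\tilde{O}(nbW)$.

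\emph{Precision along the spine.} Your rule that each node's polynomial is computed to precision (weight)$\times$(remaining depth) breaks down on the spine, where weights need not decrease. Suppose $f^*_{u_i}$ is required to precision about $W(b-i)$. Then every interval $[l,r]$ of the spine divide and conquer must receive $f^*_{u_l}$ to precision about $W(b-l)\ge W(b-k)\approx Wb/2$. Each of the roughly $s$ midpoint shifts therefore costs order $nbW$, and the total is of order $nb^2W$ when $s\approx b/2$, as happens in the single-path example above.

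The paper's fix is that a spine node is used only to spawn its light children. So $u_i$ is needed only to precision $q_i(b-i)$, where $q_i$ is the total weight of the light children of $u_i$. Telescoping \cref{lem:weights_of_children_sum_bound} down the spine gives $\sum_i q_i\le W$ (\cref{lem:A_weight}).

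An interval $[l,r]$ then needs input precision $\max_{l\le i\le r}\bigl(W(i-l)+q_i(b-i)\bigr)\le W(r-l)+b\sum_{i=l}^{r}q_i$. Hence each level of the spine recursion costs $\tilde{O}(n(sW+bW))=\tilde{O}(nbW)$. The last spine node, if it lies in the target layer, is obtained by a single shift from $f$. With these two repairs your recursion closes with the claimed bound.
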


\paragraph{Proof overview.}

The construction is by divide and conquer on the number of layers. First we
construct the first \(b/2\) layers of the lifting tree of \(f\). For each node
\(x\) in layer \(b/2\), the descendants of \(x\) are determined by
\(f_x^*\). Thus, once \(f_x^*\) is known to sufficient precision, we may
recursively construct the next \(b/2\) layers below \(x\). This gives the first \(b\) layers of the
lifting tree of \(f\).

It remains to solve the following problem: given $k=\frac{b}{2}$ layers of the lifting tree, compute the branching polynomials of the nodes of layer $k$ to an appropriate precision efficiently.

Computing all layer-\(k\) branching polynomials
independently is expensive. Instead, we decompose the known tree into one heavy path, called
the \emph{spine}, and a collection of light subtrees branching off it. Then, we compute the branching polynomial of each spine node to a sufficient precision, and use it to recurse on each light subtree.

Figures~\ref{fig:spine}, \ref{fig:tree-recursion} and \ref{fig:finishing} illustrate the flow of the algorithm. 

\begin{proof}
    
We now define the spine. Starting at $\epsilon$, repeatedly descend to a child of weight $>W/2$ while one exists.
By \cref{lem:weights_of_children_sum_bound} the children of a node have weights summing to
at most its own, so a node has at most one child of weight $>W/2$; the descent therefore
traces a well-defined path $u_0,u_1,\dots,u_s$ with $u_0=\epsilon$, ending at the
first node $u_s$ all of whose known children have weight $\le W/2$. Such node $u_s$ always exists since any layer $k$ node has no known children. We call
$u_0,u_1,\dots,u_s$ the \emph{spine}. For $i=0,\dots,s$, the children of $u_i$ of weight $\le W/2$ are called the \emph{light children} of $u_i$. These are all of the children of $u_i$ except $u_{i+1}$. Denote  
\[
    A = \bigcup_{i=0}^{s} \{\text{light children of } u_i\}
\]
Next, we compute the branching polynomials of all nodes in the spine to a sufficient precision. From that, we infer the branching polynomials of the nodes in $A$ to a sufficient precision. We can use these polynomials to recursively compute for each node $x \in A$ the branching polynomials of the layer $k$ nodes in the subtree of $x$.

We get two main benefits from the fact that $w(x) \le \frac{W}{2}$ for all $x \in A$. First, it bounds the depth of the recursion by $\log(W)$. And second, it allows us once again to reduce the precision of the polynomials we work with. 

\begin{figure}[ht]
\centering
\begin{tikzpicture}[
    every node/.style={font=\small},
    spine/.style={circle,draw,thick,fill=black!20,inner sep=1pt,minimum size=6.5mm},
    light/.style={circle,draw,fill=white,inner sep=1pt,minimum size=5.5mm},
    heavyedge/.style={very thick},
    lightedge/.style={},
    tri/.style={draw,fill=black!5},
    rec/.style={font=\scriptsize\itshape},
    wlbl/.style={font=\scriptsize},
    lbl/.style={font=\footnotesize},
    scale=1
]
 
\node[spine] (u0) at (0,0)    {$\epsilon$};
\node[spine] (u1) at (0,-2.2) {$u_1$};
\node[spine] (u2) at (0,-4.4) {$u_2$};
\node[spine] (us) at (0,-6.6) {$u_s$};
 
\draw[heavyedge] (u0) -- (u1);
\draw[heavyedge] (u1) -- (u2);

\draw[heavyedge] (u2) -- ($(u2)!0.3!(us)$);
\draw[very thick,dotted] ($(u2)!0.3!(us)$) -- ($(u2)!0.7!(us)$);
\draw[heavyedge] ($(u2)!0.7!(us)$) -- (us);

\node[wlbl,anchor=east] at (-0.28,-1.95) {$>W/2$};
\node[wlbl,anchor=east] at (-0.28,-4.15) {$>W/2$};
\node[wlbl,anchor=east] at (-0.28,-6.35) {$>W/2$};
 
\node[light] (c0) at (-2.6,-1.0) {};
\draw[lightedge] (u0) -- (c0);
\draw[tri] (c0.south) -- ++(-0.95,-1.0) -- ++(1.9,0) -- cycle;
\node[rec] at ($(c0)+(0,-0.95)$) {recurse};
\node[wlbl,anchor=east] at ($(c0)+(-0.5,0)$) {$\le W/2$};
 
\node[light] (c1a) at (-2.6,-3.2) {};
\draw[lightedge] (u1) -- (c1a);
\draw[tri] (c1a.south) -- ++(-0.95,-1.0) -- ++(1.9,0) -- cycle;
\node[rec] at ($(c1a)+(0,-0.95)$) {recurse};
\node[wlbl,anchor=east] at ($(c1a)+(-0.5,0)$) {$\le W/2$};
 
\node[light] (csa) at (-2.6,-7.6) {};
\draw[lightedge] (us) -- (csa);
\draw[tri] (csa.south) -- ++(-0.95,-1.0) -- ++(1.9,0) -- cycle;
\node[rec] at ($(csa)+(0,-0.95)$) {recurse};
\node[wlbl,anchor=east] at ($(csa)+(-0.5,0)$) {$\le W/2$};
 
\node[light] (c1b) at (2.6,-3.2) {};
\draw[lightedge] (u1) -- (c1b);
\draw[tri] (c1b.south) -- ++(-0.95,-1.0) -- ++(1.9,0) -- cycle;
\node[rec] at ($(c1b)+(0,-0.95)$) {recurse};
\node[wlbl,anchor=west] at ($(c1b)+(0.5,0)$) {$\le W/2$};
 
\node[light] (c2) at (2.6,-5.4) {};
\draw[lightedge] (u2) -- (c2);
\draw[tri] (c2.south) -- ++(-0.95,-1.0) -- ++(1.9,0) -- cycle;
\node[rec] at ($(c2)+(0,-0.95)$) {recurse};
\node[wlbl,anchor=west] at ($(c2)+(0.5,0)$) {$\le W/2$};
 
\node[light] (csb) at (2.6,-7.6) {};
\draw[lightedge] (us) -- (csb);
\draw[tri] (csb.south) -- ++(-0.95,-1.0) -- ++(1.9,0) -- cycle;
\node[rec] at ($(csb)+(0,-0.95)$) {recurse};
\node[wlbl,anchor=west] at ($(csb)+(0.5,0)$) {$\le W/2$};

\node[wlbl] at (0,-7.1) {\itshape all children light};
 
\node[spine,minimum size=5mm] (kS) at (2.55,-0.1) {};
\node[anchor=west,lbl] at (2.95,-0.1) {spine node};
\node[light,minimum size=5mm] (kL) at (2.55,-0.85) {};
\node[anchor=west,lbl] at (2.95,-0.85) {light child};
\draw[rounded corners] (2.2,0.3) rectangle (4.8,-1.25);
 
\end{tikzpicture}
\caption{The spine and the light children}
\label{fig:spine}
\end{figure}

\begin{lemma}\label{lem:A_weight}
    $\sum_{x \in A}w(x) \le W$
\end{lemma}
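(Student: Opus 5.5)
The plan is a telescoping argument along the spine, driven by \cref{lem:weights_of_children_sum_bound}. For each $i=0,\dots,s$, let $L_i$ be the set of light children of $u_i$. For $i<s$, the children of $u_i$ are exactly $L_i\cup\{u_{i+1}\}$, so \cref{lem:weights_of_children_sum_bound} gives
\[
\sum_{x\in L_i} w(x) \le w(u_i)-w(u_{i+1}).
\]
For $i=s$, all known children of $u_s$ are light, and the same lemma gives
\[
\sum_{x\in L_s} w(x)\le w(u_s).
\]

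The sets $L_i$ are disjoint, because they are children of distinct nodes. Summing over $i$ therefore telescopes:
\[
\sum_{x\in A} w(x)=\sum_{i=0}^{s}\sum_{x\in L_i} w(x)\le \sum_{i=0}^{s-1}\bigl(w(u_i)-w(u_{i+1})\bigr)+w(u_s)=w(u_0)=w(\epsilon).
\]

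It remains to show $w(\epsilon)\le W$. By definition $w(\epsilon)=\deg f_p^*$. Since $v(f)=0$, we have $f^*=f$ and $f_p^*=f_p$, so $w(\epsilon)=\deg f_p\le W$ by the hypothesis of \cref{lem:build_tree_cost}.

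No step is really an obstacle. The one point needing care is the last spine node $u_s$. It may lie in layer $k$ and so have no known children, in which case $L_s$ is empty and the bound holds trivially. Otherwise $L_s$ consists of all its known children, and the inequality above still applies, since those children are a subset of its actual children.
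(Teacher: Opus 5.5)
Your proof is correct and is essentially the paper's argument: the paper proves the suffix bound $S_i=\sum_{j\ge i}\sum_{x\in L_j}w(x)\le w(u_i)$ by backward induction using \cref{lem:weights_of_children_sum_bound}, which is your telescoping sum written recursively. You also spell out two details the paper leaves implicit: that $w(\epsilon)=\deg f_p\le W$ follows from $v(f)=0$, and how to handle $u_s$ when it lies in layer $k$.
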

\begin{proof}
For \(0\le i\le s\), let
\[
    L_i=\{\text{light children of }u_i\}
\]    
and 
\[
    S_i=\sum_{j=i}^s\sum_{x\in L_j} w(x).
\]
We claim that
\[
    S_i \le w(u_i).
\] 
for every \(i=0,\ldots,s\). For \(i=s\), this follows from
\cref{lem:weights_of_children_sum_bound}, since \(L_s\) is the set of all
children of \(u_s\).

Now suppose \(i<s\). The children of \(u_i\) consist of the spine child
\(u_{i+1}\), together with the light children \(L_i\). Therefore
\[
    \sum_{x\in L_i} w(x) + w(u_{i+1})
    \le w(u_i)
\]
by \cref{lem:weights_of_children_sum_bound}. By the induction hypothesis,
\(S_{i+1}\le w(u_{i+1})\), so
\[
    S_i
    =
    \sum_{x\in L_i}w(x)+S_{i+1}
    \le
    \sum_{x\in L_i}w(x)+w(u_{i+1})
    \le
    w(u_i).
\]
Taking \(i=0\), we get
\[
    \sum_{x\in A}w(x)=S_0\le w(\epsilon)\le W.
\]
\end{proof}

Next, we define $q_i = \sum_{x \text{ is a light child of $u_i$}}w(x)$ for $i = 0,\dots,s$. From \cref{lem:A_weight},
\begin{equation}\label{q_sum_small}
    q_0 + \ldots + q_s \le W 
\end{equation}

\begin{algorithm}[H]
    \caption{Computing spine branching polynomials}\label{alg:computing_spine_polys}
    
    Input: $b,W \in \mathbb{N}$, a range $[l,r]$ with $0\le l \le r \le s \le b$, the integers $q_l,\ldots,q_r$, and the base-$p$ digits written on the path connecting the spine nodes $u_l$ to $u_r$. We are also given $f_{u_l}^*$ to precision $\operatorname{per}(l,r) := \max_{l\le i \le r}(W(i-l) +q_i(b-i))$.
    
    Output: The polynomials $f_{u_i}^*$ to precision $q_i(b-i)$ for $i \in [l,r]$.

    \noindent\rule{\textwidth}{0.4pt}
    \begin{algorithmic}[1]
    \If{$l=r$}
        \State Output $f_{u_l}^*$ and return
    \EndIf
    \State Set $mid = \lceil \frac{(l+r)}{2}\rceil$. 
    \State Compute $f_{u_{mid}}^*$ up to precision $\operatorname{per}(mid,r)$ using \cref{lem:computing_branching_polys} \label{step:compute_middle_spine_poly}
    \State Recurse on $[mid,r]$ with $f_{u_{mid}}^*$ \label{step:recurse_right}
    \State Recurse on $[l,mid-1]$ with $f_{u_l}^* \bmod  p^{\operatorname{per}(l,mid-1)}$ \label{step:recurse_left}
    \end{algorithmic}
\end{algorithm}

\paragraph{Correctness.} Assume that $l=r$. We have $\operatorname{per}(l,l) = q_l(b-l)$, so the precision of $f_{u_{l}}$ in the input matches the one in the output. Thus we return a correct result in this case. From now on assume $l<r$.

We next show that the conditions of \cref{lem:computing_branching_polys} are met when it is invoked in line \ref{step:compute_middle_spine_poly}. Indeed, 
$
\operatorname{per}(mid,r) = \max_{mid \le i \le r}(W(i-mid)+q_i(b-i)) \le \max_{l \le i \le r}(W(i-mid)+q_i(b-i)) = \operatorname {per}(l,r) - W(mid-l) \le \operatorname{per}(l,r) - w(u_{l+1})(mid-l)
$.

Thus we manage to compute $f_{u_{mid}}^*$ to a sufficient precision for the recursive call in line \ref{step:recurse_right}, and the desired polynomials are output in the recursive calls in lines \ref{step:recurse_right} and \ref{step:recurse_left}.

\paragraph{Running time.} 
We prove that the algorithm runs in $\tilde{O}(nbW)$.

By \cref{lem:computing_branching_polys}, line \ref{step:compute_middle_spine_poly} takes $\tilde{O}(n\operatorname{per}(l,r)) \le \tilde{O}(n((r-l)W+\max_{l\le i\le r}(q_i)b)) \le \tilde{O}(n((r-l)W+b\sum_{l\le i\le r}q_i))$. Let $T(l,r)$ be the running time of this algorithm. For $l<r$, We have
\[
T(l,r) \le T(l,mid-1) + T(mid,r) + \tilde{O}(n((r-l)W+b\sum_{l\le i\le r}q_i)) .
\]
Observe that at each level of the recursion the sum of lengths of the intervals equals exactly $r-l+1$. Thus, the contribution of the term $\tilde{O}(n(r-l)W)$ at each level is the same. Additionally, since we sum over all $q_i$ and the intervals are disjoint, the term  $ \tilde{O}(n(b\sum_{l\le i\le r}q_i))$ is again the same at all the levels of the recursion. Consequently,
\[
T(l,r) = \tilde{O}(n((r-l)W+b\sum_{l\le i\le r}q_i)) \le \tilde{O}(nbW)
\]
Using $\sum_{l\le i\le r}q_i \le W$. The factor $\log s$ which comes from the number of levels of the recursion is absorbed into the $\tilde{O}$.

\begin{algorithm}[H]
    \caption{Computing branching polynomials of layer $k$}\label{alg:computing_branching_polys}
    
    Input: $b,W \in \mathbb{N}$, $k \in \Z_{\ge 0}$, $f \in \Z[x]$ given to precision $Wb$ with $v(f) = 0$ and $W \ge \deg f_p^*$. We are also given layers $0$ to $k$ of the lifting tree of $f$, given as a list of edges with the base $p$ digits written on them, together with all node weights.

    Output: The branching polynomials $f_{x}^*$ of all layer $k$ nodes given to precision $w(x)(b-k)$.
    \noindent\rule{\textwidth}{0.4pt}
    \begin{algorithmic}[1]
    \State Find the spine $u_0,\ldots,u_s$ and compute the values $q_0,\dots,q_s$. \label{step:find_spine}
    \State Use \cref{alg:computing_spine_polys} with parameters $(b,W,l=0, r=s,q_0,\dots,q_s)$, the digits on the path from $u_0$ to $u_s$, and $f_{u_l}^* = f_{u_0}^*=f_{\epsilon}^*=f$ to compute the polynomials $f_{u_i}^*$ to precision $q_i(b-i)$ for $i=0,\dots,s$.\label{step:alg2}
    \For{$i=0,\dots,s$}
        \For{$x : x \text{ is a light child of $u_i$}$}
            \State Compute $f_x^*$ to precision $w(x)(b-i-1)$ using \cref{lem:computing_branching_polys} with $f_{u_i}^* \pmod {p^{w(x)(b-i)}}$\label{step:computing_child_branching_poly}
            \State Recursively use this algorithm with $b-i-1 \rightarrow b$ $w(x) \rightarrow W$, $k-i-1 \rightarrow k$, $f_x^* \rightarrow f$ to output the branching polynomials of all layer $k$ nodes contained in the subtree of $x$. \label{step:recursive_call_alg3}
        \EndFor
    \EndFor
    \State If $s=k$, compute $f_{u_k}^*$ up to precision $w(u_k)(b-k)$ using \cref{lem:computing_branching_polys} with $f$ and output it. \label{step:comp_u_s_branching_poly}
    \end{algorithmic}
\end{algorithm}

\paragraph{Correctness.} We begin by verifying that $f$ is known to a sufficient precision when given to \ref{step:alg2}. Indeed, $\operatorname{per}(0,s) = \max_{0 \le i \le s}(Wi+q_i(b-i)) \le \max_{0 \le i \le s}(Wi+W(b-i)) = Wb$

Next, note that we know $f_{u_i}^*$ to precision $q_i(b-i) \ge w(x)(b-i)$ by \cref{alg:computing_spine_polys}, so it is known to precision $w(x)(b-i)$ when given to \cref{lem:computing_branching_polys} in line \ref{step:computing_child_branching_poly}. Thus $f_x^*$ is indeed computed to precision $w(x)(b-i)-w(x) = w(x)(b-i-1)$ in line \ref{step:computing_child_branching_poly}. 

We now prove that the parameters given in the recursive call to algorithm \ref{alg:computing_branching_polys} in line \ref{step:recursive_call_alg3} satisfy the input conditions. Indeed, $f_x^*$ is known to precision $w(x)(b-i-1)$, and $v(f_x^*)=0$ is trivial. $w(x) \ge \deg (f_x)_p^*$ is true by \cref{lem:deg_le_weight}. We note that since $x$ is in layer $i+1$, the layer $k-i-1$ nodes of the branching tree of $f_x^*$ are exactly the layer $k$ nodes of the branching tree of $f$ in the subtree of $x$. And since $(b-i-1) - (k-i-1) = b-k$, the output of the recursive call is of sufficient precision. 

Note that the use of \cref{lem:computing_branching_polys} in line \ref{step:comp_u_s_branching_poly} is valid. Indeed, $f$ is known to precision $Wb$ and $Wb - w(u_1)k \ge Wb-Wk=W(b-k)\ge w(u_k)(b-k)$.

Also note that each layer $k$ node other than possibly $u_s$ is contained in exactly one subtree of a node in $A$. And $u_s$ is not contained in any of them, and is output if and only if $s=k$. Thus each branching polynomial of a layer $k$ node appears exactly once in the output.

\paragraph{Running time.} 
We prove that the algorithm runs in $\tilde{O}(nbW)$.

Line \ref{step:find_spine} requires a simple scan of the tree and direct computations, which can be done in $\tilde{O}(bW)$ time since the spine length is bounded by $k \le b$, and the number of children of each node is bounded by $W$ (since the number of nodes in any layer is bounded by $W$).

According to the running time analysis of \cref{alg:computing_spine_polys}, line \ref{step:alg2} runs in $\tilde{O}(nbW)$. 

Line \ref{step:computing_child_branching_poly} runs in $\tilde{O}(nw(x)(b-i)) \le \tilde{O}(nw(x)b)$, which sums to $\tilde{O}(nbW)$ as $\sum_{x \in A}w(x) \le W$. Note that reducing $f$ modulo $p^{w(x)(b-i)}$ takes negligible time, as the coefficients of $f$ are stored as base $p$ integers and reducing an integer modulo a power of $p$ just means considering only a prefix of its digits. 

Note that $n$ stays fixed in our recursive calls, and $b$ does not increase. Let $T(W)$ be the maximum running time of \cref{alg:computing_branching_polys}, fixing $n$ and an upper bound on $p$.  Then line \ref{step:recursive_call_alg3} takes at most $T(w(x))$ time.

Since $f$ is of degree $n$ and is given to precision $bW$, by \cref{lem:computing_branching_polys}, Line \ref{step:comp_u_s_branching_poly} takes $\tilde{O}(nbW)$ time.

Overall, we have 
\[
T(W) \le \tilde{O}(nbW) + \sum_{x \in A} T(w(x))
\]
Recall that $\sum_{x \in A}w(x) \le W$ and $w(x) \le \frac{W}{2}$ for all $x \in A$.  Therefore there are $O(\log W)$ levels of recursion and each level contributes $\tilde{O}(nbW) $ and we thus get (with a different $\tilde{O}$)
\[
T(W) = \tilde{O}(nbW).
\]

We now give the algorithm that computes layers $0$ to $b$ of the lifting tree. The structure of the algorithm is
illustrated in the following figure.

\begin{figure}[H]
\centering
\begin{tikzpicture}[
    every node/.style={font=\small},
    nd/.style={circle,draw,fill=white,inner sep=1pt,minimum size=5.5mm},
    tri/.style={draw,fill=black!5},
    guide/.style={dashed,gray!55,thin},
    rec/.style={font=\small\itshape},
    stp/.style={font=\small,align=center},
    llbl/.style={font=\small},
    scale=1
]
 
\draw[guide] (-3.7,0) -- (-0.5,0);
\draw[guide] (-3.7,-3) -- (3.7,-3);
\draw[guide] (-3.7,-6) -- (3.7,-6);
\node[llbl,anchor=east] at (-3.85,0)  {layer $0$};
\node[llbl,anchor=east] at (-3.85,-3) {layer $b/2$};
\node[llbl,anchor=east] at (-3.85,-6) {layer $b$};
 
\node[nd] (eps) at (0,0) {$\epsilon$};
\draw[tri] (eps.south) -- (-3.4,-3) -- (3.4,-3) -- cycle;
\node[stp] at (0,-1.85) {\textbf{1.}~build first $b/2$\\ layers (recurse)};
 
\node[nd] (x1) at (-2.6,-3) {};
\node[nd] (x2) at (-0.9,-3) {};
\node[nd] (xl) at ( 2.3,-3) {};
\node at (0.7,-3) {$\cdots$};
\node[rec] at (-2.6,-2.68) {$x_1$};
\node[rec] at (-0.9,-2.68) {$x_2$};
\node[rec] at ( 2.3,-2.68) {$x_\ell$};
 
\draw[tri] (x1.south) -- (-3.35,-6) -- (-1.85,-6) -- cycle;
\node[rec] at (-2.6,-5.3) {recurse};
\draw[tri] (x2.south) -- (-1.65,-6) -- (-0.15,-6) -- cycle;
\node[rec] at (-0.9,-5.3) {recurse};
\draw[tri] (xl.south) -- (1.55,-6) -- (3.05,-6) -- cycle;
\node[rec] at (2.3,-5.3) {recurse};
\node at (0.7,-4.8) {$\cdots$};
 
\draw[->,>=stealth,shorten >=1pt] (3.45,-3) -- (xl.east);
\node[stp,anchor=west] at (3.55,-3)
    {\textbf{2.}~compute $f^*_{x_i}$\\ at layer $b/2$ (Alg.~\ref{alg:computing_branching_polys})};
 
\node[stp] at (0,-6.65) {\textbf{3.}~recurse into the subtree of each $x_i$};
 
\end{tikzpicture}
\caption{High-level illustration of the algorithm}
\label{fig:tree-recursion}
\end{figure}

\begin{algorithm}[H]
    \caption{Tree building algorithm}\label{alg:Tree building algorithm}
    
    Input: A prime $p$, integers $n,b,W \in \mathbb{N}$, a polynomial $f \in \mathbb{Z}[x]$ of degree $n$ with $v(f)=0$, $\deg(f_p) \le W$, given to precision $bW$.
    
    Output: layers $0$ to $b$ of the lifting tree as a rooted tree with edge
labels in \(\{0,\ldots,p-1\}\), together with the weights of all nodes.

    \noindent\rule{\textwidth}{0.4pt}
    \begin{algorithmic}[1]
    \If{$b=1$}
        \State Return the roots of $f_p$ with their multiplicities using \cref{lem:find_roots_and_multiplicities} \label{step:b=1}
    \EndIf
    \State Recursively run this algorithm with $(p,n,\lceil\frac{b}{2}\rceil,W,f \pmod {p^{W\lceil\frac{b}{2}\rceil}})$ to construct layers $0$ to $\lceil\frac{b}{2}\rceil$ of the lifting tree and their weights. Let the nodes of layer $\lceil\frac{b}{2}\rceil$ be $x_1,\ldots,x_l$. \label{step:recurse_b/2}
    \State Compute the branching polynomials $f_{x_i}^*$ to precision $w(x_i)\lfloor\frac{b} {2}\rfloor$ for $i=1,\dots,l$
    by applying \cref{alg:computing_branching_polys} to the already constructed layers $0$ to $\lceil\frac{b}{2}\rceil$ of the lifting tree, with parameters $(b,W,k=\lceil\frac{b}{2}\rceil,f)$. \label{step:use_alg3}
    \State For each $i$, recursively run this algorithm with $(p,n,\lfloor\frac{b} {2}\rfloor,w(x_i),f_{x_i}^* \pmod {p^{\lfloor\frac{b} {2}\rfloor{w(x_i)}}}$ to construct layers $0$ to $\lfloor\frac{b} {2}\rfloor$ \label{step:recursion_alg4}
 of the lifting tree of \(f_{x_i}^*\), along with the weights of its nodes, and attach the resulting
edge-labeled tree below \(x_i\). \label{step:recurse_on_layer_b/2}
    \end{algorithmic}
\end{algorithm}

\paragraph{Correctness.} The only non-trivial detail to verify is that the precision of $f_{x_i}^*$ in the recursive call in line \ref{step:recursion_alg4} is sufficient. $f_{x_i}^*$ is known to precision $\lfloor \frac{b}{2} \rfloor w(x_i) \ge \lfloor \frac{b}{2} \rfloor \deg(f_{x_i})_p^*$ by lemma \cref{lem:deg_le_weight}. Thus the conditions in the recursive call to \cref{alg:Tree building algorithm} are met.  

\paragraph{Running time.}
We prove that the algorithm runs in $\tilde{O}(nbW+W\sqrt{p})$.

We now analyze the total cost of all executions of line
\ref{step:b=1}. Each such execution corresponds to some node \(v\) in one of
the layers \(0,\ldots,b-1\): at that node, the algorithm finds the roots of
\((f_v)_p^*\), together with their multiplicities. Let \(d_v\) be the number of
children of \(v\), equivalently the number of distinct roots of \((f_v)_p^*\)
in \(\mathbb F_p\).

By \cref{lem:find_roots_and_multiplicities}, the cost at \(v\) is
\[
    \tilde{O}(n\log^2 p+\sqrt{d_vp}),
\]
and if \(d_v\le 1\), the \(\sqrt{d_vp}\) term may be omitted.

First, we bound the contribution of the \(n\log^2 p\) terms. The algorithm
invokes line \ref{step:b=1} at most once for each node in layers
\(0,\ldots,b-1\). Each layer contains at most \(W\) nodes, so the number of such
calls is at most \(bW\). Hence these terms contribute
\[
    \tilde{O}(nbW).
\]

It remains to bound the square-root terms. Let \(L\) be the set of leaves of the
finite tree computed up to layer \(b\), where a node is also considered a leaf if
it has no children before layer \(b\). The weights of the leaves have total
weight at most \(W\): this follows by repeatedly using the fact that the sum of
the weights of the children of any node is at most the weight of the node.
Since every leaf has weight at least \(1\), we get
\[
    |L|\le W.
\]

In any finite rooted tree,
\[
    \sum_{v:\,d_v\ge 2}(d_v-1)\le |L|-1\le W.
\]
For \(d_v\ge 2\), we have \(\sqrt{d_v}\le 2(d_v-1)\). Hence
\[
    \sum_{v:\,d_v\ge 2}\sqrt{d_v}
    \le
    2\sum_{v:\,d_v\ge 2}(d_v-1)
    \le 2W.
\]
Thus the total contribution of the square-root terms is
\[
    \tilde{O}(W\sqrt p).
\]

Altogether, all executions of line \ref{step:b=1} take
\[
    \tilde{O}(nbW+W\sqrt p)
\]
time.

Now we fix $n$, and let $T(W,b)$ be the maximal running time of lines \ref{step:recurse_b/2}-\ref{step:recurse_on_layer_b/2}, ignoring the running time of line \ref{step:b=1}.

Line \ref{step:recurse_b/2} takes $T(W,\frac{b}{2})$ time.

By the running time analysis of algorithm \ref{alg:computing_branching_polys}, line \ref{step:use_alg3} takes $\tilde{O}(nbW)$ time.

Line \ref{step:recurse_on_layer_b/2} takes $ \sum_{i=1}^lT(w(x_i),\frac{b}{2})$ time.

Overall, we get
\[
T(W,b) \le \tilde{O}(nbW)+T(W,\frac{b}{2})+\sum T(w(x_i),\frac{b}{2})
\]
and recall that $\sum w(x_i) \le W$. There are $\log b$ levels for the recursion. At each level the sum of weights times the relevant $b$ is always upper bounded by $bW$. Thus,
\[
T(W,b) = \tilde{O}(nbW)
\]
as desired.

We now give the full algorithm that proves \cref{thm:lifting_roots}. First, we find all roots of $f_p$, which is layer $1$ of the lifting tree of $f$. Then, we handle simple roots and repeated roots separately. We perform parallel Hensel lifting using multipoint evaluation to lift all simple roots; and use \cref{alg:Tree building algorithm} to lift all repeated roots.

\begin{figure}[H]
\centering
\begin{tikzpicture}[
    every node/.style={font=\small},
    box/.style={draw,rounded corners,align=center,inner sep=4pt,font=\small},
    hdr/.style={align=center,font=\small},
    dot/.style={circle,fill=black!45,inner sep=0pt,minimum size=2.4mm},
    tri/.style={draw,fill=black!5},
    liftline/.style={thin},
    ar/.style={-stealth,shorten >=1pt},
    scale=1
]

\node[box] (src) at (0,0.4) {roots of $f_p$ with multiplicities};

\draw[ar] (src.south) -- (-2.7,-0.9);
\draw[ar] (src.south) -- ( 2.7,-0.9);

\node[hdr] at (-2.7,-1.35) {\textbf{simple roots} $A$\\[-1pt] {\small parallel Hensel lifting}};
\foreach \x in {-3.5,-2.7,-1.9} {
    \node[dot] at (\x,-2.2) {};
    \draw[liftline] (\x,-2.2) -- (\x,-4.9);
    \node[dot] at (\x,-4.9) {};
}

\node[hdr] at (2.7,-1.35) {\textbf{repeated roots} $B$\\[-1pt] {\small build lifting tree (Alg.~\ref{alg:Tree building algorithm})}};
\node[dot] at (1.9,-2.2) {};
\draw[tri] (1.9,-2.35) -- (1.25,-4.9) -- (2.55,-4.9) -- cycle;
\node[dot] at (3.5,-2.2) {};
\draw[tri] (3.5,-2.35) -- (2.85,-4.9) -- (4.15,-4.9) -- cycle;
\node at (4.8,-3.6) {$\cdots$};

\node[box] (out) at (0,-6.2) {$\le n$ candidates\\ (first $b$ $p$-adic digits)};
\draw[ar] (-2.7,-4.95) -- (out.north west);
\draw[ar] ( 2.7,-4.95) -- (out.north east);

\end{tikzpicture}
\caption{The finishing step. The roots of $f_p$ are found with their multiplicities
(\cref{lem:find_roots_and_multiplicities}) and split by multiplicity. A \emph{simple} root
(multiplicity $1$) has a unique lift, so all of $A$ is lifted in parallel by Hensel lifting,
doubling the $p$-adic precision over $\lceil\log b\rceil$ rounds of multipoint evaluation. A
\emph{repeated} root (multiplicity $>1$) branches, so each $x\in B$ is lifted by building the
first $b$ layers of its lifting tree with \cref{alg:Tree building algorithm}. The union of the
two families is a set of at most $n$ candidates containing the first $b$ $p$-adic digits of
every root of $f$.}
\label{fig:finishing}
\end{figure}

\begin{algorithm}[H]
    \caption{Finishing the lifting step}\label{alg:finish_lifting_roots}
    
    Input: A prime $p$, integers $n,b,t \in \mathbb{N}$, a polynomial $f$ of degree $n$ such that the sum of multiplicities of the repeated roots of $f_p$ is bounded by $t$, and $v(f)=0$. $f$ is given to precision $tb$. 
    
    Output: Layer $b$ of the lifting tree.

    \noindent\rule{\textwidth}{0.4pt}
    \begin{algorithmic}[1]
        \State Invoke \Cref{lem:find_roots_and_multiplicities}
        to find all roots of $f_p$ and their multiplicities. If $b=1$, output this set and terminate. Let $A$ be the set of roots of multiplicity 1, and $B$ the set of roots with multiplicity $>1$. \label{step:find_f_p_roots}
        \For{$x \in B$}
            \State Compute $f_x^*$ to precision $w(x)(b-1)$ using \cref{lem:computing_branching_polys} with $f \pmod {p^{w(x)b}}$ \label{step:Compute_B_branching_polys}
            \State Run \cref{alg:Tree building algorithm} with $(p,n,b-1,w(x),f_x^*)$ to output the nodes of layer $b-1$ of the lifting tree of $f_x^*$, prefixed by the digit $x$. \label{step:final_run_alg4}
        \EndFor
        \For{$i = 1,\dots,\lceil\log(b)\rceil$}
            \State Use \cref{comp:multipoint_evaluation} to compute the values $f(x)$ and $f'(x)$ for $x \in A$ as elements in $\Z_{p^{\min(b,2^i)}}$.
            \State Set $\text{{$x-\frac{f(x)}{f'(x)} \pmod {p^{\min(b,2^i)}}: x \in A $} } \rightarrow A $.
        \EndFor
        Output the integers in $A$.
    \end{algorithmic}
\end{algorithm}

This algorithm proves \cref{thm:lifting_roots}. We require $v(f)=0$ which follows from the fact that $p$ does not divide the leading coefficient of $f$. And we only need $f$ to precision $tb$.

\paragraph{Correctness.} 
We have $w(x) \le t$ for $x \in B$. Thus, $f$ is known to precision $w(x)b$, which is the precision we need for line \ref{step:Compute_B_branching_polys} since $w(x)b - w(x) = w(x)(b-1)$.

We verify that the conditions on the input for \cref{alg:Tree building algorithm} are met. By lemma \cref{lem:deg_le_weight}, we have $\deg (f_x)_p^* \le w(x)$. And $f_x^*$ is indeed known to precision $w(x)(b-1)$.

Thus, the first $b-1$ layers of the lifting tree of the polynomials $f_x^*, x \in B$ are returned, which are the layer $b$ nodes of the lifting tree of $f$ which come from the nodes of $B$.

The rest of the algorithm simply performs Hensel lifting on the roots in $A$, which gives the layer $b$ nodes that come from $A$.

\paragraph{Running time.} 
We prove that the algorithm runs in $\tilde{O}(nbt+(t+\sqrt{n})\sqrt{p})$ time.

Line \ref{step:find_f_p_roots} takes $\tilde{O}(n+\sqrt{np})$ time by \cref{lem:find_roots_and_multiplicities}. 

Line \ref{step:Compute_B_branching_polys} costs $\tilde{O}(nbw(x))$, which sums to $\tilde{O}(nbt)$ since $\sum_{x \in B}w(x) \le t$.

Line \ref{step:final_run_alg4} takes $\tilde{O}(w(x)(nb+\sqrt{p}))$ which sums to $\tilde{O}(t(nb+\sqrt{p}))$

It remains to analyze the loop over the simple roots \(A\). In the \(i\)-th iteration we work over
\(\mathbb Z_{p^{2^i}}\). Since \(|A|\le n\), multipoint evaluation of both
\(f\) and \(f'\) at the points of \(A\) costs
\[
    \tilde{O}(n2^i)
\]
bit operations. The Newton updates
\[
    x\mapsto x-\frac{f(x)}{f'(x)} \pmod {p^{2^i}}
\]
also take \(\tilde{O}(n2^i)\) time. Therefore the total cost of this loop is
\[
    \sum_{i=1}^{\lceil \log(b)\rceil}\tilde{O}(n2^i)
    =
    \tilde{O}(nb).
\]

Summing up the steps, the algorithm takes $\tilde{O}(nbt + (\sqrt{n}+t)\sqrt{p})$, as desired.

\end{proof}

\section{Verifying roots}\label{sec:verifying}

We present two algorithms for verifying candidate roots. The first runs in $\tilde{O}(nb+n^2)$, which is fast when $b$ is large. The second algorithm runs in $\tilde{O}(nb^2)$, which is fast when $b$ is small. Both algorithms work after minor adjustments to the input. They assume that the constant coefficient of $f$ is nonzero, that the candidates $x_i$ are positive, and that $m=n$. At the end of the section we explain these algorithms imply  \cref{thm:verifying_roots}.

Both algorithms apply simple divisibility tests to filter the candidate set; the analysis
shows that these tests leave either few candidates or candidates of small size.

Both algorithms are simple to state, but their correctness and running-time analysis are involved. The large-$b$ algorithm is self-contained, whereas the small-$b$ algorithm uses a lemma from the root-finding section.

\subsection{Algorithm for large $b$}

We give an algorithm for verifying roots in $\tilde{O}(nb+n^2)$. 

First note that we may assume $b\ge n$. Indeed, if $b < n$, replacing $b$ by $n$ preserves the conditions $\|f\|_\infty < 2^b$ and $x_i < 2^b$ and it does not change the target bound $\tilde{O}(nb+n^2)$.

The algorithm is motivated by the following observation: 
for any integers $x,y$, we have $x-y \mid f(x)-f(y)$. In particular, if $f(x_i)=0$, then $x_i-y \mid f(y)$.

The idea is to repeatedly test the condition $x_i - y \mid f(y)$ for small values of $y$, eliminating candidate roots as we proceed.

This test gives no information when $f(y)=0$, as $x_i - y \mid f(y)$ holds trivially. We therefore begin with a preparation step that finds a segment of small positive integers where $f$ doesn't vanish.

\begin{lemma}\label{lem:find_segment_f_doesn't_vanish}
    We can find an integer $0 \le a \le 2n^2$ such that $f(x) \neq 0$ for every
    integer $x \in [a, a+2n-1]$, in $\tilde{O}(n^2 + nb)$ time.
\end{lemma}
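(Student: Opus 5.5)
We plan to evaluate $f$ at every integer in the range $[0,2n^2+2n-1]$, or more precisely to decide for each such point whether $f$ vanishes there. We then pick a window of $2n$ consecutive nonzero points.

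\emph{Existence.} The polynomial $f$ is nonzero, since its constant coefficient is assumed nonzero. Hence $f$ has at most $n$ integer roots. Split $[0,2n^2+2n-1]$ into the $n+1$ disjoint blocks $[2nj,\,2nj+2n-1]$ for $j=0,\dots,n$. At most $n$ of these blocks contain a root, so some block with $a=2nj\le 2n^2$ is root-free. After the evaluation step, finding the first such block is a linear scan of $O(n^2)$ steps.

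\emph{Evaluation cost.} Computing $f(x)$ exactly is expensive. Each value has size about $b+n\log(2n^2)=\tilde O(b+n)$ bits, and there are $O(n^2)$ points, so direct evaluation would cost $\tilde O(n^2(n+b))$. That exceeds the target, so we only test vanishing, using a modular reduction.

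Choose primes $p_1,\dots,p_r$, each larger than $2n^2+2n$ and of size $\tilde O(n)$. Take $r$ large enough that $\prod p_j$ exceeds $\max_x |f(x)|\le (n+1)2^b(2n^2+2n)^n$; it suffices to have $r=\tilde O((b+n)/\log n)$ primes, which the sieve provides. If $f(x)\neq 0$, then some $p_j$ fails to divide $f(x)$.

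For each prime, reduce all coefficients of $f$ modulo every $p_j$ using the multipoint modulo tool, at cost $\tilde O(n(b+r))$. Then evaluate $f\bmod p_j$ at all $O(n^2)$ points using multipoint evaluation over $\mathbb F_{p_j}$. Since the points exceed $p_j$ in number only when $p_j<2n^2$, we instead use all residues, which costs $\tilde O(n+p_j)$ per prime. The total for this step is then $\tilde O(r(n^2))$, which is too big. This is the main obstacle.

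To fix it, we do not evaluate modulo every prime at every point. We use one prime per point adaptively, or better, we reduce the number of points. A cleaner alternative handles the blocks in order $j=0,1,\dots$. We test a block by a single exact evaluation of $f$ at its $2n$ points via the product-tree multipoint evaluation over $\mathbb Z$. This costs $\tilde O(n(n+b))$ per block: the bit size is $\tilde O(n(n+b))$, because the subproduct tree moduli have small coefficients and remainders stay of size $\tilde O(n+b)$ bits per coefficient.

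If we stop at the first block with no zero, we must bound the number of blocks examined. With $k$ blocks tried, that costs $k\cdot\tilde O(n^2+nb)$, and a priori $k$ can be $n+1$. To control this, we will first count the integer roots up to $2n^2+2n$ cheaply. Rather than testing blocks one at a time, we use a single multipoint modulo over $\mathbb Z$ of $f$ against the moduli $x-c$ for all $O(n^2)$ points $c$. This keeps the cost at $\tilde O(n^2+nb)$, except that the values $f(c)$ themselves have $\tilde O(n+b)$ bits. To avoid writing all of them, we work modulo a single prime $P$ with $\log P=\tilde O(1)$, larger than $n^2$. We first pick $P$ not dividing any nonzero $f(c)$; doing this deterministically is the delicate part. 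We would try $O(\log)$ random-free candidates by bounding $\prod_{c:f(c)\ne0}|f(c)|\le 2^{\tilde O(n^2(n+b))}$, so some prime among the first $\tilde O(n^2+nb)$ works. I expect this selection step to be the main difficulty in meeting $\tilde O(n^2+nb)$.
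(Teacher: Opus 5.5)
Your existence argument via the $n+1$ disjoint blocks of length $2n$ is correct, but the proposal never arrives at an algorithm within $\tilde{O}(n^2+nb)$. Each evaluation strategy you try costs $\tilde{O}(n^3+n^2b)$ in the worst case, as you note yourself: many CRT primes, exact block-by-block evaluation, and exact remainders modulo all $x-c$ over $\mathbb{Z}$. Your final plan then rests on choosing a prime $P$ that divides no nonzero $f(c)$, and you leave that step open. You give no deterministic way to find or certify such a $P$, since certifying it requires knowing which $f(c)$ vanish. Your count is also off: $\log_2\prod_{c:\,f(c)\neq 0}|f(c)|=\tilde{O}(n^3+n^2b)$ only guarantees a good prime among the first $\tilde{O}(n^3+n^2b)$ primes, not the first $\tilde{O}(n^2+nb)$.

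The missing idea is that you do not need to detect vanishing exactly. A one-sided modular test with a single, arbitrary prime suffices.

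Take any prime $p\ge u:=2n^2+2n$; the smallest one is $O(n^2)$ and is found by the sieve. Let $f_p^*$ be the reduction of $f/p^{v_p(f)}$ modulo $p$, which is a nonzero polynomial of degree at most $n$. If $f(c)=0$ then $f_p^*(c)\equiv 0 \pmod p$, so every point where $f_p^*$ does not vanish is certified to be a non-root of $f$.

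Because $p\ge u$, the integers $0,\dots,u-1$ are pairwise distinct modulo $p$, so at most $n$ of them are roots of $f_p^*$. Your block argument therefore applies verbatim with the roots of $f_p^*$ in place of the roots of $f$: some block contains no root of $f_p^*$, hence no root of $f$. Spurious zeros, i.e.\ points with $0\neq f(c)\equiv 0 \pmod p$, are harmless, since they are still among the at most $n$ roots of $f_p^*$.

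Computing $f_p^*$ takes $\tilde{O}(nb)$. Its roots among $0,\dots,u-1$ can be found by multipoint evaluation at all $p=O(n^2)$ residues in $\tilde{O}(n^2)$ time. Alternatively, as the paper does, use the $\tilde{O}(\sqrt{pn}+n)$ root-finding lemma over $\mathbb{F}_p$. The normalization by $p^{v_p(f)}$ is there only to guarantee $f_p^*\neq 0$.
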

\begin{proof}
        We first give the idea of the proof, then explain how to implement it efficiently. The idea is to find the roots of $f$ modulo a prime $p \sim 2n^2$, and then choose an interval that contains none of those roots. Specifically:

    Set $u = 2n^2 + 2n$. We find the smallest prime $p \ge u$ using sieve \cref{sec:known}\eqref{comp:sieve}. By the prime number theorem, we have $p = O(u) =O(n^2)$. 

    Recall that $f_p^*$ is the image of $\frac{f}{p^{v_p(f)}}$ in $\F_p$. Calculate $f_p^*$ in $\tilde{O}(nb)$ time and compute the
    root set $R \subseteq \mathbb{F}_p$ of $f_p^*$ using \cref{comp:roots_mod_p}. As
    $p = \tilde{O}(n^2)$, this takes $\tilde{O}(\sqrt{pn}) = \tilde{O}(n^{3/2})$ time.

    Let $U = \{0, 1, \dots, u-1\}$. Since $p \ge u$, $U$'s elements are distinct modulo $p$. Deleting the elements of $U$ which are roots of $f_p^*$ leaves at least
    $u - n = 2n^2 + n$ integers, forming at most $n+1$ maximal segments of consecutive integers.
    By the pigeonhole principle the longest segment has at least
    $\lceil (2n^2 + n)/(n+1) \rceil = 2n$ integers; let $a$ be its smallest element (found by
    sorting $R$ and scanning, in $\tilde{O}(n)$ time). The segment lies in $U$ and has length
    $\ge 2n$, so $a + 2n - 1 \le u - 1$, giving $0 \le a \le 2n^2$. Finally, every integer
    $x \in [a, a+2n-1]$ lies in $U$ and is not a root of $f_p^*$, hence it is also not a root of $f$.
\end{proof}

We now give our algorithm for verifying roots.

\begin{algorithm}[H]
    \caption{Verifying roots for large $b$}
    \label{alg:algorithm_verify_large_b}
    \raggedright
    
    Input: positive integers $n\le b$, the coefficients $a_0,\ldots,a_n $ of a polynomial $f$ with $a_0 \neq 0$ and $\|f\|_\infty < 2^b$, and $n$ distinct candidate roots $x_1,\ldots,x_n$ such that $1 \le x_i < 2^b$ for $i = 1,\ldots,n$.\\
    Output: The indices of the $x_i$ which are roots of $f$.
    \noindent\rule{\textwidth}{0.4pt}
    \begin{algorithmic}[1]
        \State Call any $x_i$ with $x_i \le 32n^22^{\frac{b}{n}}$ small. Compute $f(x_i)$ for each small $x_i$ using \cref{sec:known}\eqref{comp:multipoint_evaluation}. Output the indices of the small candidates for which $f(x_i)=0$. Eliminate all small candidates from the list. \label{step:eliminate_small_x_i}
        
        \State Find an integer $0 \le a \le2n^2$ such that $f$ does not vanish on $[a,a+2n-1]$ using \cref{lem:find_segment_f_doesn't_vanish}
        \State Compute $f(a),\ldots,f(a+2n-1)$ using \cref{sec:known}\eqref{comp:multipoint_evaluation}
        \For{$y = a, \dots, a+2n-1$}\label{second loop}
            \State Compute $f(y) \bmod x_{i} - y$ for all remaining candidates $x_i$ using \cref{sec:known}\eqref{comp:multipoint_modulo}
            \State Eliminate any candidate $x_i$ such that  $f(y) \not\equiv 0 \pmod{ x_{i} - y}$.
        \EndFor
        \State Output the indices of all remaining candidates.\label{step:output_candidates}
    \end{algorithmic}
\end{algorithm}

We use the following trivial lemma throughout.

\begin{lemma}\label{lem:f-ub}
    $|f(x)| \le (n+1)\|f\|_\infty |x|^n$.
\end{lemma}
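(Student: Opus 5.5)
The statement is the elementary bound $|f(x)| \le (n+1)\|f\|_\infty |x|^n$. The plan is to prove it directly from the triangle inequality applied to the expansion $f(x)=\sum_{i=0}^n a_i x^i$, giving
\[
|f(x)| \le \sum_{i=0}^n |a_i|\,|x|^i \le \|f\|_\infty \sum_{i=0}^n |x|^i .
\]
It then remains to bound each term $|x|^i$ by $|x|^n$, which gives $n+1$ terms each at most $\|f\|_\infty|x|^n$.

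The only point needing care is the comparison $|x|^i \le |x|^n$ for $0\le i\le n$. This holds exactly when $|x|\ge 1$, i.e.\ for every nonzero integer $x$. The lemma is applied in the verification algorithm only to integers: candidates $x_i\ge 1$ and small positive values $y$. So I will state and use it for integers $x\neq 0$.

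For $x=0$ the inequality can fail when $a_0\neq 0$, since the right-hand side is $0$ for $n\ge 1$. So I will either restrict the lemma to $|x|\ge 1$ explicitly, or use the form $|f(x)|\le (n+1)\|f\|_\infty \max(1,|x|)^n$, which agrees with the stated bound whenever $|x|\ge 1$. That covers every later application in this section. Beyond noting this edge case, I expect no obstacle.
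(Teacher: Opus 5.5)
Your proposal is correct and uses the same triangle-inequality argument as the paper's one-line proof, and you are right that the statement as written needs $|x|\ge 1$: it fails at $x=0$ when $a_0\neq 0$ and $n\ge 1$, which the paper glosses over. One correction to your remark about applications: $y=0$ can occur in the elimination rounds, since the segment start $a$ from the segment-finding lemma may be $0$, so adopt the form $|f(x)|\le (n+1)\|f\|_\infty\max(1,|x|)^n$ rather than the restriction to $|x|\ge 1$. That form suffices, because the paper only ever uses the weaker bound with $|y|$ replaced by $2n^2+2n$.
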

\begin{proof}
    It follows from the fact that $|a_ix^i| \le \|f\|_\infty x^n$.
\end{proof}

\subsubsection{Correctness}
Note that the moduli $x_i -y$ is non-zero as any non-small candidates satisfies $x_i > 32n^22^{\frac{b}{n}} \ge 32n^2 \ge 2n+2n^2 \ge y$.

We prove that if an index $i$ is output at line \ref{step:output_candidates}, then $x_i$ is a genuine root of $f$. The correctness of the other steps is immediate. 

Let $i$ be an index output at line \ref{step:output_candidates}. We next show that if $f(x_i) \neq 0$ then $x_i$ must be ``small'' (according to \ref{step:eliminate_small_x_i}), and hence was already eliminated.

Since $x_i$ survived all tests, $f(y)\equiv0\pmod{x_i-y}$ for every
$y=a,\ldots,a+2n-1$. Also $f(x_i)\equiv f(y)\pmod{x_i-y}$, hence
$x_i-y\mid f(x_i)$ for all such $y$. We get 
\[\lcm(x_i-a,\ldots,x_i-a-2n+1) \mid f(x_i).\]

We prove a lower bound on $\lcm(x_i-a,\ldots,x_i-a-2n+1)$ to derive $f(x_i)=0$.

\begin{lemma}\label{lem:lcm_bound}
Let $a,x$ be positive integers with $a < x$. Then
\[\binom{x}{a} \mid \lcm(x,x-1,\ldots,x-a+1).\]
\end{lemma}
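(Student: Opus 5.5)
We argue prime by prime. Since both sides are positive integers, it suffices to show that for every prime $q$,
\[
v_q\!\left(\binom{x}{a}\right) \le \max_{0\le j<a} v_q(x-j) = v_q\bigl(\lcm(x,x-1,\ldots,x-a+1)\bigr).
\]
We use the classical Legendre-type formula. Write $\binom{x}{a} = \frac{x(x-1)\cdots(x-a+1)}{a!}$. Then
\[
v_q\!\left(\binom{x}{a}\right)=\sum_{k\ge1}\left(\#\{0\le j<a: q^k\mid x-j\} - \left\lfloor \frac{a}{q^k}\right\rfloor\right).
\]
This holds because $v_q$ of a product is the sum over $k\ge 1$ of the number of factors divisible by $q^k$, and the number of elements of $\{1,\ldots,a\}$ divisible by $q^k$ is $\lfloor a/q^k\rfloor$.

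The key counting step is that any block of $a$ consecutive integers contains either $\lfloor a/q^k\rfloor$ or $\lfloor a/q^k\rfloor+1$ multiples of $q^k$. Hence each summand above is $0$ or $1$.

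Let $e=\max_{0\le j<a} v_q(x-j)$. For $k>e$ no element of the block $x-a+1,\ldots,x$ is divisible by $q^k$, so the count is $0$ and the summand is $-\lfloor a/q^k\rfloor\le 0$. Since the summand is also at least $0$, it equals $0$. Thus only the indices $k=1,\ldots,e$ can contribute, each contributing at most $1$, which gives $v_q\binom{x}{a}\le e$, as required.

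The only delicate point is the counting claim: the number of multiples of $m$ among $a$ consecutive integers lies in $\{\lfloor a/m\rfloor,\lfloor a/m\rfloor+1\}$. This follows by splitting the block into $\lfloor a/m\rfloor$ full residue systems plus fewer than $m$ leftover integers. The hypothesis $a<x$ guarantees that all factors $x-j$ are positive, so no zero factor appears and the lcm is well defined. No other obstacle is expected.
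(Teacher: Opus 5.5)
Your proof is correct and follows essentially the same route as the paper. Both compare, for each prime power $q^k$, the number of multiples in the block $x-a+1,\dots,x$ with the number in $1,\dots,a$ (these differ by at most $1$), and both observe that the block contains no multiples of $q^k$ once $k$ exceeds $v_q$ of the lcm. Your extra lower bound showing each summand is nonnegative is harmless but not needed: the inequality $t_k - s_k \le 0$ for $k>e$ already suffices.
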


\begin{proof}
    Let $p$ be a prime number, and let $k = v_p(\operatorname{lcm}(x,x-1,\ldots,x-a+1))$.  For $i = 1,2,\dots$, let $s_i,t_i$ be the number of multiples of $p^i$ in the lists $1,\dots,a$ and $x-a+1,\dots,x$ respectively. Since both intervals are of length $a$, we have $t_i \le s_i+1$. Since $k = \max(v_p(x-a+1),\dots,v_p(x))$ we have $t_i = 0$ for $i>k$. Thus
    \[
    v_p(\binom{x}{a}) = \sum_{i=1}^{\infty}(t_i - s_i) \le \sum_{i=1}^{k}(t_i - s_i) \le k = v_p(\operatorname{lcm}(x,x-1,\ldots,x-a+1))
    \]
\end{proof}

By \cref{lem:lcm_bound}, we get $\binom{x_i-a}{2n} \mid \lcm(x_i-a,\ldots,x_i-a-2n+1) \mid f(x_i)$. 
Assuming  $f(x_i) \neq 0$ we have
\[
\binom{x_i-a}{2n} \le |f(x_i)| \le (n+1)2^bx_i^n.
\]
Using the bound $\binom{u}{v} \ge (\frac{u}{v})^v$ and the inequality $a \le 2n^2$ we obtain
\[\br{\frac{x_i}{2n}-n}^{2n}\le\br{\frac{x_i-a}{2n}}^{2n} \le \binom{x_i-a}{2n} \le (n+1)2^bx_i^n. \]
If $x_i \ge 4n^2$, then $\frac{x_i}{2n}-n \ge \frac{x_i}{4n}$. Simplifying gives:
\begin{align*}
    &(\frac{x_i}{4n})^{2n} \le (n+1)2^bx_i^n \\ \Rightarrow  \quad & x_i \le (4n)^2(n+1)^{\frac{1}{n}}2^{\frac{b}{n}} \le 32n^22^{\frac{b}{n}}.
\end{align*}
Thus, we must have $x_i \le 32n^22^{\frac{b}{n}}$. Therefore, $x_i$ must have been eliminated in \ref{step:eliminate_small_x_i} as claimed.
Consequently, $f(x_i)=0$. Thus, only true roots can pass both the filtering in line~\ref{step:eliminate_small_x_i} and  all of the filtering in the for loop of line~\ref{second loop}.

\subsubsection{Running time}

We prove that the algorithm runs in $\tilde{O}(n^2+nb)$.
The main challenge is to bound the average cost of line~\ref{second loop}. We call each iteration of the
for-loop an \emph{elimination round} and treat each $y\in\{a,\ldots,a+2n-1\}$ as a test.

The following lemma will help us to bound the total bitsize of the candidates of each round.

\begin{lemma}\label{lem:survivor_product}
    Let $0\le k\le 2n$ be an integer and let $z_1,\dots,z_t$ be candidates among $x_1,\dots,x_n$,
    with $t \le k/2$, that survive the first $k$ elimination rounds. Then
    $z_1 z_2 \cdots z_t = 2^{\tilde{O}(b)}$.
\end{lemma}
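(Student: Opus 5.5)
The plan is to combine all the divisibility conditions $z_j-y\mid f(y)$ over the first $k$ test values $y$ into one inequality of the form $kS\le \tilde O(kb)+(t-1)S$, where $S:=\sum_{j}\log z_j$. Since $t\le k/2$, this can be solved for $S=\tilde O(b)$.

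Let $Y$ be the set of the first $k$ test values. These are $k$ consecutive integers in $[a,a+2n-1]$ with $a\le 2n^2$, and $f(y)\ne 0$ for $y\in Y$ by \cref{lem:find_segment_f_doesn't_vanish}. Every surviving $z_j$ is non-small, so $z_j>32n^22^{b/n}\ge 2y$ and hence $z_j-y\ge z_j/2$. By \cref{lem:f-ub} and $b\ge n$, $\log|f(y)|\le b+O(n\log n)=\tilde O(b)$. For each $y\in Y$ we have $\operatorname{lcm}_j(z_j-y)\mid f(y)$, so
\[
\sum_j \log(z_j-y)\;\le\; \log|f(y)| + \log\frac{\prod_j (z_j-y)}{\operatorname{lcm}_j(z_j-y)} .
\]
Summing over $y\in Y$, the left side is at least $kS-kt$. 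The first term on the right sums to $\tilde O(kb)$. So everything reduces to bounding the total ``excess'' $E:=\sum_{y\in Y}\log\bigl(\prod_j(z_j-y)/\operatorname{lcm}_j(z_j-y)\bigr)$.

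To bound $E$, work prime by prime. For a prime power $p^e$, let $N_{y,e}$ be the number of $j$ with $p^e\mid z_j-y$. Then $v_p$ of the excess at $y$ equals $\sum_{e\ge1}(N_{y,e}-1)^+$. I split according to whether $p^e\le k$ or $p^e>k$.

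For $p^e\le k$, bound crudely by $\sum_y N_{y,e}\le t(k/p^e+1)$: each $z_j$ is congruent to at most $\lceil k/p^e\rceil$ elements of a window of $k$ consecutive integers. Summing $\log p$ times this over all prime powers $p^e\le k$ gives $O(tk\log k)$, using $\sum_{p^e\le k}\log p/p^e=O(\log k)$ and $\sum_{p^e\le k}\log p=O(k)$. Since $t\le n\le b$, this is $\tilde O(kb)$.

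For $p^e>k$, each $z_j$ is divisible-shifted by $p^e$ for at most one $y\in Y$. Hence $(N_{y,e}-1)^+$ is at most the number of pairs $i<j$ that both hit $p^e$ at the same $y$. Such a pair satisfies $p^e\mid z_i-z_j$, and it is counted for at most one $y$. So this part of $E$ is at most $\sum_{i<j}\log|z_i-z_j|\le\sum_{i<j}\log\max(z_i,z_j)\le (t-1)S$.

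Putting these together gives
\[
kS-kt\le \tilde O(kb)+(t-1)S .
\]
Since $t-1<k/2$, this yields $S\le \tilde O(b)$, which is $z_1\cdots z_t=2^{\tilde O(b)}$.

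The main obstacle I expect is exactly this excess bound. The naive pairwise estimate, which bounds each pairwise $\gcd$ product over all $y$ by $|z_i-z_j|$ times a $k!$-type factor, loses a factor $t^2k\log k/k$, which can be of order $n^2$. The split at $p^e=k$ is what avoids this. Small prime powers must be charged per candidate rather than per pair, while large prime powers can collide at most once per pair. The hypothesis $t\le k/2$ is used precisely to absorb the $(t-1)S$ term into the left-hand side.
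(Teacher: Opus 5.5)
Your proof is correct, but it does the prime-by-prime accounting differently from the paper.

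\textbf{The paper's route.} The paper works multiplicatively with the $k$-rough part $R_{\ge k}$ and discards every prime $<k$ entirely. It then proceeds in three steps.
\begin{itemize}
\item It lower-bounds $R_{\ge k}$ of the lcm by the rough part of $\prod_j(z_j-y)$ divided by the pairwise gcds (\cref{lem:Rge_lcm}).
\item It bounds the rough part of the gcd products by $|z_i-z_j|$ (\cref{lem:Rge_gcd}). This is the same observation you use for $p^e>k$: a modulus larger than $k$ is hit by at most one $y$.
\item It recovers the smooth part of each falling factorial via \cref{lem:R_k_lower_bound}, paying a factor $k!\,z_i^{\pi(k)}$ per candidate.
\end{itemize}
This gives $Z^{k-\pi(k)-(t-1)}\le 2^{\tilde O(k(n+b))}$. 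That exponent is why the paper needs $\pi(k)\le k/4$ and a separate case for $k=O(1)$.

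\textbf{Your route.} You split at prime powers, $p^e\le k$ versus $p^e>k$. You use the exact formula $\sum_{e}(N_{y,e}-1)^+$ for the $p$-adic valuation of $\prod_j(z_j-y)/\operatorname{lcm}_j(z_j-y)$. Because you bound only the excess over the lcm, not the full valuation of the product, the small prime powers cost an additive $O(tk\log k)$ bits. That cost does not depend on the sizes of the $z_j$. High powers of small primes are absorbed into the pair argument together with the large primes.

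\textbf{What each buys.} Your version yields the cleaner inequality $(k-t+1)S\le\tilde O(kb)$, with no $\pi(k)$ loss, no case split on $k$, and no falling-factorial lemma. The paper's version is more modular, being built from three short divisibility statements about $R_{\ge k}$, but it is slightly lossier. Both proofs use $b\ge n$ at the same point: to absorb the term depending on $t\le k/2\le n$ into $\tilde O(kb)$. For you that term is $O(tk\log k)$; for the paper it is $2^{kt}(k!)^t$.
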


\begin{proof}
    If $z_i$ survived the test by $y$ then $z_i - y \mid f(y)$, so the lcm of the $z_i - y$ divides
    $f(y)$. Since each round has $f(y) \neq 0$ and $y \le 2n^2 + 2n$, \Cref{lem:f-ub} implies
    \[
        \operatorname{lcm}_{i=1}^{t}(z_i - y) \le |f(y)| \le (n+1)2^b\,(2n^2+2n)^n = 2^{\tilde{O}(n+b)},
    \]
    and multiplying over the first $k$ rounds,
    \[
        \prod_{y=a}^{a+k-1} \operatorname{lcm}_i(z_i - y) \;\le\; 2^{\tilde{O}(k(n+b))}.
    \]
    Our goal is to obtain a lower bound on the left-hand side in terms
    of $Z := z_1 z_2\cdots z_t$. We first discard the part of each lcm built from small primes.
    For a positive integer $x$, write $x = R_{<k}(x)\,R_{\ge k}(x)$, splitting $x$ into its
    \emph{$k$-smooth part} $R_{<k}(x)$ (divisible only by primes $<k$) and its \emph{$k$-rough
    part} $R_{\ge k}(x)$ (divisible only by primes $\ge k$); we use freely that $R_{\ge k}$ is
    multiplicative and that $R_{\ge k}(x) \le x$. This definition naturally extends to rational numbers as well. Setting
    \[
        Q \;=\; \prod_{y=a}^{a+k-1} R_{\ge k}\big(\operatorname{lcm}_i(z_i - y)\big),
    \]
    the bound $R_{\ge k}(x) \le x$ applied termwise gives, from the inequality above,
    \begin{equation}
        Q \;\le\; 2^{\tilde{O}(k(n+b))}. \label{eq:ub-Q}
    \end{equation}
    The rest of the proof bounds $Q$ from below in terms of $Z$.
    
    We first replace the lcm by the product of the $z_i - y$, up to a pairwise-gcd correction,
    using the following lemma (whose proof is deferred for ease of reading).

    \begin{lemma}\label{lem:Rge_lcm}
        For any positive integers $m_1,\dots,m_l$, there exists an integer $c$ such that 
        \[
            c\cdot R_{\ge k}\!\left(\frac{m_1 m_2 \cdots m_l}{\prod_{1\le i<j\le l}\gcd(m_i,m_j)}\right)
            \;=\;
            R_{\ge k}\big(\operatorname{lcm}(m_1,\dots,m_l)\big).
        \]
    \end{lemma}

    Applying this with $l=t$ and  $m_i = z_i - y$, and expanding by multiplicativity of $R_{\ge k}$,
    \begin{align*}
        R_{\ge k}\big(\operatorname{lcm}_i(z_i - y)\big)
        &\;\ge\; R_{\ge k}\br{
        \frac{\prod_i (z_i - y)}{\prod_{i<j} \!\big(\gcd(z_i - y,\, z_j - y)\big)}}\\ &\;=\;
        \frac{\prod_i R_{\ge k}(z_i - y)}{\prod_{i<j} R_{\ge k}\!\big(\gcd(z_i - y,\, z_j - y)\big)}.
    \end{align*}
    Taking the product over $y = a, \dots, a+k-1$ and regrouping gives 
    \begin{align}
        Q=\prod_{y=a}^{a+k-1}R_{\ge k}\big(\operatorname{lcm}_i(z_i - y)\big) &\;\ge\; \prod_{y=a}^{a+k-1}  
        \frac{\prod_i R_{\ge k}(z_i - y)}{\prod_{i<j} R_{\ge k}\!\big(\gcd(z_i - y,\, z_j - y)\big)} \notag \\ &\;=\; \frac{\prod_i \prod_{y=a}^{a+k-1} R_{\ge k}(z_i - y)}{\prod_{i<j} \prod_{y=a}^{a+k-1} R_{\ge k}\!\big(\gcd(z_i - y,\, z_j - y)\big)}.\label{eq:Q}
    \end{align}
    Denote 
    \[
         P_i = \prod_{y=a}^{a+k-1}(z_i - y),
        \quad N = \prod_i R_{\ge k}(P_i), \quad \text{and}\quad 
        D = \prod_{i<j} R_{\ge k}\!\Big(\prod_{y=a}^{a+k-1} \gcd(z_i - y,\, z_j - y)\Big).
    \]
    We therefore rewrite \eqref{eq:Q} as
    $Q \ge N/D$.
    We bound $N$ and $D$ in turn.

    For $D$ we use the following lemma, whose proof we postpone.

    \begin{lemma}\label{lem:Rge_gcd}
        For any $1\leq i<j\leq t$, we have 
        \[
            R_{\ge k}\!\left(\prod_{y=a}^{a+k-1}\gcd(z_i - y,\, z_j - y)\right)
            \;\Big|\; R_{\ge k}(z_i - z_j).
        \]
    \end{lemma}

    Thus, each factor of $D$ divides the corresponding factor $R_{\ge k}(z_i - z_j)$. It follows that 
    \[D = \prod_{i<j} R_{\ge k}\!\br{\prod_{y=a}^{a+k-1} \gcd(z_i - y,\, z_j - y)} \;\le\; \prod_{i<j}|z_i - z_j| \;\le\; \prod_{i<j} z_i z_j \;=\; Z^{\,t-1} \]
    
    For $N$ we use a lower bound on the rough part of a falling factorial (again, proof postponed).

    \begin{lemma}\label{lem:R_k_lower_bound}
        Let $k \ge 1$ and let $x$ be an integer with $x \ge k$. Writing
        $P = x(x-1)\cdots(x-k+1)$,
        \[
            R_{\ge k}(P) \;\ge\; \frac{P}{k!\,x^{\pi(k)}}.
        \]
    \end{lemma}
    Applying \cref{lem:R_k_lower_bound} to every $x = z_i-a$ we get
    \[
        N = \prod_i R_{\ge k}(P_i) \ge \prod_i \frac{P_i}{k!\,(z_i-a)^{\pi(k)}}
        \stackrel{(\dagger)}{\ge} \frac{\prod_i (z_i/2)^k}{(k!)^t \prod_i z_i^{\pi(k)}}
        = \frac{2^{-kt}\, Z^{\,k}}{(k!)^t\, Z^{\pi(k)}}.
    \]
    We next justify inequality $(\dagger)$. Since the algorithm has already removed every candidate of
    size at most $32n^2 2^{\frac{b}{n}}$, we have $z_i > 32n^2 2^{\frac{b}{n}} > 8n^2$. Combining with $a + k \le 2n^2 + 2n \le 4n^2$, gives $z_i - k - a \ge \frac{z_i}{2}$. Thus, $P_i \ge ({\frac{z_i}{2}})^k$.
    
    Combining the bounds on $N$ and $D$,
    \[
        Q \;\ge\; \frac{N}{D} \;\ge\; \frac{2^{-kt}\, Z^{\,k}}{(k!)^t\, Z^{\pi(k)}\, Z^{\,t-1}}.
    \]
    Comparing with the upper bound \eqref{eq:ub-Q} and absorbing
    $2^{kt}(k!)^t = 2^{O(k^2 \log k)} = 2^{\tilde{O}(k(n+b))}$ (using $k \le 2n$) yields
    \begin{equation}\label{eq:Z-ub}
        Z^{\,k - \pi(k) - (t-1)} \;\le\; 2^{\tilde{O}(k(n+b))}.
    \end{equation}
    If $k = O(1)$, then $t = O(1)$. Since $z_i < 2^b$, we obtain $Z = 2^{O(b)}$. Otherwise, we can assume that
    $k$ is large enough so that $\pi(k) \le k/4$, and since $t - 1 < k/2$ the exponent on the left of \eqref{eq:Z-ub} is
    at least $k - k/4 - k/2 = k/4 > 0$. Dividing through, \eqref{eq:Z-ub} implies
    \[
        \log_2 Z \;\le\; \frac{\tilde{O}(k(n+b))}{k/4} = \tilde{O}(n+b) = \tilde{O}(b),
    \]
    using $b \ge n$. Hence $z_1 \cdots z_t =Z= 2^{\tilde{O}(b)}$.
\end{proof}

It remains to prove the three lemmas used above.

\begin{proof}[Proof of \cref{lem:Rge_lcm}]
    Fix a prime $q$; we show it appears in the right-hand side at least as often as in the left.
    If $q<k$ then $R_{\ge k}$ removes $q$ from both sides, so assume $q\ge k$, where $R_{\ge k}$
    leaves $v_q$ untouched. Pick $r$ with
    $v_q(m_r) = \max_i v_q(m_i) = v_q(\operatorname{lcm}(m_1,\dots,m_l))$. For every $i\neq r$,
    $v_q(\gcd(m_i,m_r)) = v_q(m_i)$, so discarding from the denominator all gcd factors not
    involving $r$ only increases the quotient:
    \[
        v_q\!\left(\frac{\prod_i m_i}{\prod_{i<j}\gcd(m_i,m_j)}\right)
        \le v_q\!\left(\frac{\prod_i m_i}{\prod_{i\neq r}\gcd(m_i,m_r)}\right)
        = v_q(m_r) = v_q(\operatorname{lcm}(m_1,\dots,m_l)). \qedhere
    \]
\end{proof}

\begin{proof}[Proof of \cref{lem:Rge_gcd}]
    Fix a prime $q$; the case $q<k$ is trivial, so assume $q \ge k$. Since $q \ge k$ divides at
    most one of the $k$ consecutive integers $z_i - a, \dots, z_i - a - (k-1)$, either it divides
    none of the gcds in the product, or there is a unique $y_0$ with
    $q \mid \gcd(z_i - y_0, z_j - y_0)$. In the latter case
    $\gcd(z_i - y_0, z_j - y_0) \mid z_i - z_j$, so
    \[
        v_q\!\left(\prod_{y=a}^{a+k-1}\gcd(z_i - y, z_j - y)\right)
        = v_q\big(\gcd(z_i - y_0, z_j - y_0)\big)
        \le v_q(z_i - z_j). \qedhere
    \]
\end{proof}

\begin{proof}[Proof of \cref{lem:R_k_lower_bound}]
    Since $P = R_{\ge k}(P)\,R_{<k}(P)$, the claim is equivalent to
    $R_{<k}(P) \le k!\,x^{\pi(k)}$. From $P = k!\binom{x}{k}$ we have
    $v_p(P) = v_p(k!) + v_p\binom{x}{k}$ for every prime $p$, and by \cref{lem:lcm_bound},
    $\binom{x}{k} \mid \operatorname{lcm}(x-k+1,\dots,x)$, so
    $v_p\binom{x}{k} \le \max_{0\le i<k} v_p(x-i)$. Thus $p^{\,v_p\binom{x}{k}}$ divides some
    $x - i \le x$, giving $p^{\,v_p\binom{x}{k}} \le x$ and hence
    $p^{\,v_p(P)} \le x\,p^{\,v_p(k!)}$. Multiplying over the at most $\pi(k)$ primes $p<k$ and
    using $\prod_{p<k} p^{\,v_p(k!)} \le k!$,
    \[
        R_{<k}(P) = \prod_{p<k} p^{\,v_p(P)}
        \le x^{\pi(k)}\prod_{p<k} p^{\,v_p(k!)} \le k!\,x^{\pi(k)}. \qedhere
    \]
\end{proof}

\subsubsection{Conclusion}

We bound the running time of  \cref{alg:algorithm_verify_large_b} step by step; the elimination loop at line~\ref{second loop} is the only
nontrivial part, and it is where \cref{lem:survivor_product} is used. 
The loop processes $y = a, \dots, a+2n-1$, and we index its iterations as rounds $r = 1, \dots, 2n$.
The candidates entering round $r$ are exactly those that survived the first $r-1$ rounds; call this
set $S_r$. 

\begin{lemma}\label{lem:round_product}
    For each round $r$, $\prod_{z \in S_r} z \le 2^{\tilde{O}(nb/r)}$.
\end{lemma}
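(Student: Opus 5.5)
Entering round $r$ means the candidate survived the first $k := r-1$ elimination rounds. The plan is to cover $S_r$ by blocks of size at most $k/2$ and apply \cref{lem:survivor_product} to each block. Each block then has product $2^{\tilde{O}(b)}$, so the total product is $2^{\tilde{O}(b)\cdot\#\text{blocks}}$. We have $|S_r|\le n$ and each block can hold $\lfloor k/2\rfloor$ elements, so the number of blocks is $\lceil n/\lfloor k/2\rfloor\rceil = O(n/r)$ once $r$ exceeds a small constant. This gives $\prod_{z\in S_r} z \le 2^{\tilde{O}(b)\cdot O(n/r)} = 2^{\tilde{O}(nb/r)}$.

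The steps, in order, are as follows. First treat small $r$: if $r \le 4$, say, use the trivial bound that every candidate satisfies $z<2^b$, so $\prod_{z\in S_r} z < 2^{nb} = 2^{O(nb/r)}$. For $r\ge 5$, set $k=r-1\ge 4$ and $t=\lfloor k/2\rfloor \ge 2$. Partition $S_r$ arbitrarily into $\lceil |S_r|/t\rceil \le n/t+1 = O(n/r)$ groups, each of size at most $t\le k/2$. Every member of $S_r$ survived rounds $1,\dots,k$, and $0\le k\le 2n$ since $r\le 2n$. So \cref{lem:survivor_product} applies to each group and gives a product of $2^{\tilde{O}(b)}$ per group. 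Multiplying over the groups finishes the argument.

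The only point needing care is uniformity of the hidden polylogarithmic factor. The $\tilde{O}(b)$ produced by \cref{lem:survivor_product} must be bounded by $b\cdot\mathrm{polylog}(n,b)$ independently of $k$ and of the chosen group. I would check this against that lemma's proof. There the final bound is $\log_2 Z \le \tilde{O}(k(n+b))/(k/4)$, where the numerator's polylog comes from $\log(2n^2+2n)$ and $\log k!$; both are uniform in $k\le 2n$. The constant-$k$ case there is also uniform. So the bound holds with a single polylogarithmic factor, and summing over $O(n/r)$ groups yields $\tilde{O}(nb/r)$ as claimed.
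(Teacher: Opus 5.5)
Your proposal is correct and is essentially the paper's proof: partition $S_r$ into $O(n/r)$ blocks of size at most $\lfloor (r-1)/2\rfloor$, apply \cref{lem:survivor_product} to each block, and use the trivial bound $2^{nb}$ for the first few rounds. The paper takes the cutoff $r\le 2$ rather than your $r\le 4$, which makes no difference; your check that the hidden polylogarithmic factor in \cref{lem:survivor_product} is uniform in $k$ and in the chosen block is correct, and is a point the paper leaves implicit.
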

\begin{proof}
    The members of $S_r$ survived $r-1$ rounds, so by \cref{lem:survivor_product} any subset of
    $S_r$ of size at most $(r-1)/2$ has product $2^{\tilde{O}(b)}$. For $r \ge 3$, partition $S_r$ into
    $\lceil |S_r| / \lfloor (r-1)/2 \rfloor \rceil = \tilde{O}(n/r)$ such subsets; multiplying their
    products gives $\prod_{z \in S_r} z \le 2^{\tilde{O}(nb/r)}$. For $r\le 2$, the bound is
    trivial $\prod_z z \le (2^b)^n = 2^{nb}$.
\end{proof}

In round $r$ we compute $f(y) \bmod (z - y)$ for all $z \in S_r$ via \cref{sec:known}\eqref{comp:multipoint_modulo},
at cost $\tilde{O}\big(\log |f(y)| + \sum_{z \in S_r} \log(z - y)\big)$. Since $y \le 2n^2 + 2n$ we
have $\log |f(y)| = \tilde{O}(n + b)$, and by \cref{lem:round_product},
$\sum_{z \in S_r} \log(z - y) \le \log \prod_{z \in S_r} z = \tilde{O}(nb/r)$. Hence round $r$ costs
$\tilde{O}(n + b + nb/r)$, and summing over all $2n$ rounds,
\[
    \sum_{r=1}^{2n} \tilde{O}\!\left(n + b + \frac{nb}{r}\right)
    = \tilde{O}\big(n(n+b)\big) + \tilde{O}\!\left(nb \sum_{r=1}^{2n} \tfrac{1}{r}\right)
    = \tilde{O}(n^2 + nb),
\]
the second term collapsing to $\tilde{O}(nb)$ by the harmonic sum. So the loop runs in
$\tilde{O}(n^2 + nb)$ time.

We now consider the remaining steps.
\begin{itemize}
    \item \textbf{Step 1} (verify and strip small candidates). Evaluating $f$ at the $\le n$ small
    candidates simultaneously via \cref{sec:known}\eqref{comp:multipoint_evaluation} costs $\tilde{O}(nb)$, since each
    small $x_i$ has $O(\log(n)+b/n)$ bits.
    \item \textbf{Step 2} (find the segment). By \cref{lem:find_segment_f_doesn't_vanish}, this
    takes $\tilde{O}(n^2 + nb)$ time.
    \item \textbf{Step 3} (evaluate $f$ on $[a, a+2n-1]$). The $2n$ points are $O(\log n)$ bits so \cref{sec:known}\eqref{comp:multipoint_evaluation} costs
    $\tilde{O}\big(n(n+b)\big) = \tilde{O}(n^2 + nb)$.
    \item \textbf{Step 5} (output) requires no computation.
\end{itemize}

Summing all steps, \cref{alg:algorithm_verify_large_b} runs in $\tilde{O}(nb + n^2)$ time.

\subsection{Algorithm for small $b$}
For small values of $b$, the large $b$ algorithm is quite inefficient. We therefore give a second algorithm for the same verification problem in $\tilde{O}(nb^2)$ time; this is faster when $b \le \sqrt{n}$.

\begin{algorithm}[H]
    \caption{Verifying roots for small $b$}
    \label{alg:algorithm_verify_roots_small_b}
    \raggedright
    
    Input: positive integers $n,b$, the coefficients $a_0,\ldots,a_n $ of a polynomial $f$ with $a_0 \neq 0$ and $\|f\|_\infty < 2^b$. In addition, we are given $n$ distinct candidate roots $x_1,\ldots,x_n$ such that $1 \le x_i < 2^b$ for $i = 1,\ldots,n$.
    Output: The indices of the $x_i$ which are roots of $f$.
    \noindent\rule{\textwidth}{0.4pt}
    \begin{algorithmic}[1]
        \State Eliminate any candidates that do not divide $a_0$ \label{step:x_i_div_a_0}
        \State Set $M = |a_0|^{b}$ \label{step:calc_M}
        \State Calculate $f(x_i) \pmod M$ for every remaining candidate using \cref{sec:known}\eqref{comp:multipoint_evaluation} \label{step:f(x_i)modM}
        \State Eliminate any $x_i$ that yields a non-zero residue mod $M$ \label{step:remove_candidates}
        \State Calculate $f(x_i)$ for each remaining candidate separately using \cref{sec:known}\eqref{comp:substitute_in_poly} \label{step:final_check}
        \State\label{step:check_f(x_i)=0} Output all $x_i$ such that $f(x_i)=0$ 
    \end{algorithmic}
\end{algorithm}

\subsubsection{Correctness}
It is clear that any $x_i$ that was eliminated is not a root of $f$. Conversely,
every candidate $x_i$ that is output satisfies $f(x_i)=0$ by line \ref{step:check_f(x_i)=0}.

\subsubsection{Complexity analysis}
We prove that the algorithm takes $\tilde{O}(nb^2)$ time.

Since modular arithmetic on integers of bit size $O(b)$ takes $\tilde{O}(b)$ time, line \ref{step:x_i_div_a_0} takes $\tilde{O}(nb)$ time. Using fast exponentiation, line \ref{step:calc_M} takes $O(\log(b))$ multiplications of integers of size $O(b^2)$ bits, for a total of $\tilde{O}(b^2)$ time. By \cref{sec:known}\eqref{comp:multipoint_evaluation}, line \ref{step:f(x_i)modM} costs $\tilde{O}(n)$ operations in $\Z_M$, each requiring $\tilde{O}(\log(M))=\tilde{O}(b^2)$ time. Thus line \ref{step:f(x_i)modM} runs in $\tilde{O}(nb^2)$. Line \ref{step:check_f(x_i)=0}
requires no computations, so it only remains to bound the running time of line \ref{step:final_check}. We prove the following claim:

\begin{lemma}\label{not_many_candidates_remain}
At most $b+1$ candidates remain when line \ref{step:final_check} is reached.
\end{lemma}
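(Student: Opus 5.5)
Our plan is to charge each surviving candidate $x_i\neq 1$ to a prime divisor of $a_0$, and then apply \cref{x_appears_on_lifting_tree} once for each such prime. Let $x_i$ survive lines \ref{step:x_i_div_a_0} and \ref{step:remove_candidates}, so that $x_i\mid a_0$ and $|a_0|^b\mid f(x_i)$. If $x_i=1$ we set it aside; there is at most one such candidate. Otherwise we pick a prime $p\mid x_i$. Since $x_i\mid a_0$, also $p\mid a_0$. We put $k=v_p(a_0)\ge 1$. From $|a_0|^b\mid f(x_i)$ we get $p^{bk}\mid f(x_i)$. Fixing one such prime $p_i$ for every survivor $x_i\ne 1$ gives a map from these survivors to the set of primes dividing $a_0$.

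Next we check, for a fixed prime $p\mid a_0$ with $k=v_p(a_0)$, that every survivor charged to $p$ meets the hypotheses of \cref{x_appears_on_lifting_tree}. Such an $x_i$ is a nonnegative integer with $p\mid x_i$ and $p^{bk}\mid f(x_i)$. It remains to get $x_i<p^b$, and this holds because $x_i<2^b\le p^b$. The lemma also asks for $b>1$. If $b=1$ the statement is easy to settle directly: then $|a_0|<2$, so $a_0=\pm1$, the only positive divisor of $a_0$ is $1$, and at most one candidate survives. So assume $b\ge 2$. Then \cref{x_appears_on_lifting_tree} says there are at most $k=v_p(a_0)$ distinct such $x$. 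The candidates are distinct, so at most $v_p(a_0)$ survivors are charged to $p$.

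Finally we sum over the primes. The number of survivors is at most
\[
1+\sum_{p\mid a_0} v_p(a_0)\le 1+\log_2|a_0|\le 1+b,
\]
using $a_0\ne0$ and $|a_0|<2^b$, since $|a_0|\ge\prod_{p\mid a_0}2^{v_p(a_0)}$.

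The main difficulty lies in the lifting-tree lemma, which we are taking as given. Here the only delicate points are the precision match and the boundary conditions. The exponent in the test $M=|a_0|^b$ has to give $p^{bk}$ exactly, with $k=v_p(a_0)$ and not some smaller power. The bound $x_i<p^b$ comes from $p\ge2$. And the edge case $b=1$ must be treated separately, as above.
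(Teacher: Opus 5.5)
Your proposal is correct and follows the paper's proof: you charge each survivor $x_i>1$ to a prime $p\mid x_i\mid a_0$, apply \cref{x_appears_on_lifting_tree} (after checking $x_i<2^b\le p^b$ and $p^{b\,v_p(a_0)}\mid |a_0|^b\mid f(x_i)$) to get at most $v_p(a_0)$ survivors per prime, and sum to $1+\sum_{p\mid a_0}v_p(a_0)\le b+1$. Your treatment of the case $b=1$ via $a_0=\pm1$ differs only cosmetically from the paper's observation that $1\le x_i<2$.
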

\begin{proof}
    If $b=1$, the claim is trivial, as $1 \le x_i < 2$. Now suppose $b > 1$. Candidates that remain after line \ref{step:remove_candidates} is run will be called survivors. Let $p \mid a_0$ be a prime. By \cref{x_appears_on_lifting_tree}, there are at most $v_p(a_0)$ survivors $x_i$ with $p \mid x_i$. For every survivor $x_i >1$, we have $x_i \mid a_0$, and we associate $x$ with one prime divisor $p \mid x_i \mid a_0$. Since each prime $p \mid a_0$ can be associated with at most $v_p(a_0)$ survivors, the number of survivors other than possibly $x_i=1$ is at most $\Sigma_{p \mid a_0}v_p(a_0) \le \log_2(|a_0|) \le b$.   
\end{proof}

By \cref{sec:known}\eqref{comp:substitute_in_poly} , each value $f(x_i)$ can be computed in $\tilde{O}(nb)$ time. Therefore, by the lemma above, line \ref{step:final_check} runs in $\tilde{O}(nb^2)$ time. This proves that the algorithm runs in $\tilde{O}(nb^2)$ time.

\subsection{Conclusion of \cref{thm:verifying_roots}}
 Let $f$ be a polynomial and let $x_1,\ldots,x_m$ be candidate roots. We explain how to use \cref{alg:algorithm_verify_large_b} and \cref{alg:algorithm_verify_roots_small_b} to verify which of the $x_i$ are roots of $f$. 
 
 First, the algorithm assumes $m = n$ only for convenience, and it is clear that the algorithms apply to $m\le n$ as well. Next, we find the largest $k$ such that $x^k \mid f$, and replace $f$ by $\frac{f}{x^k}$. If $0$ is among the candidates, we mark it as a root exactly when $k>0$, and discard it from the list. We now assume $a_0 \neq 0$.

Both algorithms take positive candidates, so we handle the two signs separately. We run the chosen
algorithm once on the positive $x_i$ with $f$, and once on the values $-x_i$ for the negative
$x_i$ with $g(x) = f(-x)$, using that $x_i$ is a root of $f$ iff $-x_i$ is a root of $g$. The
choice of algorithm depends on $b$: if $b \ge \sqrt{n}$ we use \cref{alg:algorithm_verify_large_b},
which runs in $\tilde{O}(nb + n^2)$, and otherwise \cref{alg:algorithm_verify_roots_small_b}, which
runs in $\tilde{O}(nb^2)$. Since $n^2 \le nb^2$ exactly when $b \ge \sqrt{n}$, in both regimes the
cost is $\tilde{O}\big(nb + \min(n^2, nb^2)\big)$, proving \cref{thm:verifying_roots}.

\section{Proof of  \cref{thm:finding_integer_roots}}\label{sec:final-proof}

We now combine everything to prove  \cref{thm:finding_integer_roots}. 

\begin{proof}[Proof of  \cref{thm:finding_integer_roots}]

We now explain how these results imply the root-finding theorem. Set
\(t=\lfloor \sqrt n\rfloor\). By \cref{thm:finding_p}, we may find a prime
\(p\le \tilde{O}(nb/t)\) such that \(p\) does not divide the leading
coefficient of \(f\), and
\[
    \deg(\gcd(f_p,f_p'))<t,
\]
in time
\[
    \tilde{O}\left(\frac{n^2b}{t}\right)
    =
    \tilde{O}(n^{3/2}b).
\]
The quantity \(\deg(\gcd(f_p,f_p'))\) bounds the sum of \(m_\alpha-1\) over the
roots \(\alpha\) of \(f_p\), where \(m_\alpha\) is the multiplicity of
\(\alpha\). Hence the sum of the multiplicities of the repeated roots is at
most \(2t\). Applying \cref{thm:lifting_roots} with parameter $2t$ gives a list of at most \(n\)
candidates containing the first \(b\) \(p\)-adic digits of every \(p\)-adic root
of \(f\). Since every integer root of \(f\) has absolute value \(<2^b\), this
list contains every non-negative integer root of \(f\). Using \(t=\lfloor\sqrt n\rfloor\) and
\(p\le \tilde{O}(nb/t) \le \tilde{O}( nb)\), the running time of this lifting step
is
\[
    \tilde{O}\left(nb t+(t+\sqrt n)\sqrt p\right)
    =
    \tilde{O}\left(n^{3/2}b\right).
\]
Finally, we remove any candidate which is at least $2^b$ in absolute value and then use
\cref{thm:verifying_roots} to test which of the candidates are genuine integer
roots. Because \(\min(n^2,nb^2)\le \sqrt{n^2\cdot nb^2} =n^{3/2}b\), this costs
\[
    \tilde{O}(nb+\min(n^2,nb^2))\le \tilde{O}(n^{3/2}b).
\]
This gives us all non-negative roots of $f$. We run this whole procedure again on $f(-x)$, which is also square-free and satisfies $\|f(-x)\|_\infty <2^b$, to find all positive integer roots of $f(-x)$. These are the absolute values of the negative roots of $f$. 

Summing the three stages gives
\[
    \tilde{O}(n^{3/2}b),
\]
as claimed.

\end{proof}

\section{Future work}\label{sec:future_work}
There are several ways to generalize or improve our results.
\begin{enumerate}
    \item Finding rational roots. The methods of this paper
can be extended from integer roots to rational roots. We give the main ideas here. Let \(x/y\) be a
rational root of \(f\), with $\gcd(x,y)=1$. By the rational root theorem,
\(x\) divides the constant coefficient of \(f\), and \(y\) divides the leading
coefficient of \(f\). In particular, we have \(|x|,|y|< 2^b\). Moreover, since the prime \(p\) chosen in \cref{sec:finding-p} does not divide the leading coefficient of \(f\), we have
\(p\nmid y\). Thus \(x/y\) is a \(p\)-adic integer.

Therefore, invoking \cref{thm:lifting_roots} with $2b+2$ in place of $b$ returns an integer \(z\) satisfying
\[
    z \equiv \frac{x}{y} \pmod {p^{2b+2}}
\]
By standard rational reconstruction, one can recover the
rational number \(x/y\) from \(z\), using $x,y < 2^b$.
Rational reconstruction goes back to Wang's $p$-adic reconstruction algorithm
and can be implemented via the extended Euclidean algorithm
\cite{WangGuyDavenportRationalReconstruction,MonaganRationalReconstruction}.

Our root verification algorithms can also be extended to rational numbers. In \cref{alg:algorithm_verify_large_b}, the observation is that if $f(c/d)=0$ then $dy-c \mid f(y)$. Thus the test $x_i-y \mid f(y)$ is replaced by the test $dy-c \mid f(y)$. For \cref{alg:algorithm_verify_roots_small_b}, the test $a_0^b \mid f(x_i)$ is replaced by $a_0^{2b+2} \mid d^nf(c/d)$. We expect the running-time
analyses to carry over with only minor changes, and leave the detailed analysis to future work. 

    \item Solve for non-square-free polynomials. The standard way to deal with such polynomials is finding integer roots for $h = \frac{f}{gcd(f,f')}$ which is square free and has the same roots as $f$. However, we are not aware of a way to compute $\gcd(f,f')$ in $\tilde{O}(n^{3/2}b)$. Harvey and Hittmeir have shown how to do this in $\tilde{O}(n^3+n^2b)$ \cite{HarveyHittmeirRPower}.

    \item  Improve the complexity of the algorithm. We have not been able to improve the running time of any stage of the algorithm by more than polylogarithmic factors. It seems difficult to improve the running time of \cref{thm:lifting_roots} for the following reason: the coefficients of $f$ matter modulo $p^{bt}$. Therefore, in some sense, the input size is $\tilde{O}(nbt)$ bits, which is the dominant part of the complexity of the algorithm that proves 
    \cref{thm:lifting_roots}. To improve the complexity of this algorithm one must use the fact that $\|f\|_\infty < 2^b$, which seems challenging.

\end{enumerate}

\section*{Acknowledgments}

The author wishes to thank his advisor, Prof. Amir Shpilka, for many helpful discussions on this problem and for valuable comments and suggestions on earlier drafts of this paper. The author also thanks Raz Dvora for helpful discussions.

\bibliographystyle{plain}
\bibliography{references}

\end{document}